\theoremstyle{definition}
\newtheorem*{remark}{\noindent\normalsize\bf{}Remark}
\newtheorem{theorem}{\noindent\normalsize\bf{}Theorem}
\newtheorem{lemma}{\noindent\normalsize\bf{}Lemma}
\newtheorem*{cons}{\noindent\normalsize\bf{}Consequence}
\newcommand{\diag}{\ensuremath{\mathrm{diag}}}
\newcommand{\Var}{\ensuremath{\mathop{\mathbb{D}}}\nolimits}
\newcommand{\E}{\ensuremath{\mathrm{{\mathbb E}}}}
\newcommand{\Po}{\ensuremath{\mathrm{Po}}}
\definecolor{gray}{rgb}{0.4,0.4,0.4}
\definecolor{darkblue}{rgb}{0.0,0.0,0.6}
\definecolor{cyan}{rgb}{0.0,0.6,0.6}
 \renewcommand{\(}{$\,}
\renewcommand{\)}{\,$}
\def\ub{u^{\flat}}
\def\gaussv{\gamma}
\def\gpsiidev{g''(\psi_i^{T} \theta)}
\def\rb0{r_0^{\flat}}
\def\nquad{\hspace{-1cm}}
\def\eqdef{\stackrel{\operatorname{def}}{=}}
\def\epsb{\varepsilon^{\flat}}
\def\bX{X^{\flat}}
\def\xiv{\bf{\xi}}
\def\tX{\tilde{X}}
\def\eps{\varepsilon}
\def\vp{\mathrm{v}}
\def\ND{\mathcal{N}}
\def\Ybb{\mathbb{Y}}
\def\dLhConv{\text{TP}_h}
\newcommand{\bb}[1]{\boldsymbol{#1}}
\renewcommand{\tilde}[1]{\widetilde{#1}}
\renewcommand{\Gamma}{\varGamma}
\renewcommand{\Pi}{\varPi}
\renewcommand{\Sigma}{\varSigma}
\renewcommand{\Delta}{\varDelta}
\renewcommand{\Lambda}{\varLambda}
\renewcommand{\Psi}{\varPsi}
\renewcommand{\Phi}{\varPhi}
\renewcommand{\Theta}{\varTheta}
\renewcommand{\Omega}{\varOmega}
\renewcommand{\Xi}{\varXi}
\renewcommand{\Upsilon}{\varUpsilon}
\def\Var{\operatorname{Var}}
\def\argmax{\operatornamewithlimits{argmax}}
\def\argmin{\operatornamewithlimits{argmin}}
\def\tr{\operatorname{tr}}
\def\R{I\!\!R}
\def\E{I\!\!E}
\def\P{I\!\!P}
\def\kappa{\varkappa}
\def\T{\top}
\def\diag{\operatorname{diag}}
\def\bldiag{\operatorname{blockDiag}}
\def\tX{\widetilde{X}}
\def\gtr{g_\triangle}
\def\nablaGLM{S}
\def\GLMLINK{A}
\def\GLMlink{g}
\def\aGLMlink{a_g}
\def\varepsilonv{\bb{\varepsilon}}
\def\xiv{\bb{\xi}}
\def\dLh{T_h}
\def\dLhb{T_h^{\flat}}
\def\dLb12{T_h^{\flat}(\theta_1^{\flat}, \theta_2^{\flat})}
\def\dLhconv{\text{TP}_h(\tau)}
\def\dLhconvb{\text{TP}_h^{\flat}(\tau)}
\def\dLhconve{\text{TP}_h^{\epsilon}(\tau)}
\def\localr{\Theta(\rr)}
\def\gradL{\nabla L}
\def\opttheta{\widehat{\theta}}
\def\rombb{\diamondsuit^{\flat}(\rr,\xx)}
\def\romb{\diamondsuit(\rr,\xx)}
\def\rombe{\diamondsuit^{\epsilon}(\rr,\xx)}
\def\alphab{\alpha^{\flat}}
\def\chb{\chi^{\flat}}
\def\alphab12{\alpha^{\flat}(\theta, \theta_0)}
\def\chib12{\chi^{\flat}(\theta, \theta_0)}
\def\Lbf{L^{\flat}(\theta)}
\def\Lb0{L^{\flat}(\theta_0)}
\def\Lbopt{L^{\flat}(\widehat{\theta})}
\def\Lfopt{L(\widehat{\theta})}
\def\Lf{L(\theta)}
\def\L0{L(\theta_0)}
\def\Eb{\E^{\flat}}
\def\Varb{\Var^{\flat}}
\newcommand{\normp}[1]{
\left\Vert #1 \right\Vert
}
\newcommand{\normop}[1]{
\left\Vert #1 \right\Vert_{\oper}
}
\newcommand{\vertiii}[1]{{\left\vert\kern-0.25ex\left\vert\kern-0.25ex\left\vert #1 
    \right\vert\kern-0.25ex\right\vert\kern-0.25ex\right\vert}}
\def\rddelta{\delta}
\def\Id{I\!\!\!I}
\def\Ind{\operatorname{1}\hspace{-4.3pt}\operatorname{I}}
\def\nsize{{n}}
\def\DP{D}
\def\DPc{\DP_{0}}
\def\gmiid{\mathtt{g}_{1}}
\def\xib{\xiv^{\flat}}
\def\xivb{\xiv_{\rdb}}
\def\ex{\mathrm{e}}
\def\gm{\mathtt{g}}
\def\gmc{\gm_{c}}
\def\gmb{\gm}
\def\xx{\mathtt{x}}
\def\xxc{\xx_{c}}
\def\rups{\rr_{0}}
\def\nunu{\nu_{0}}
\def\dimp{p}
\def\BB{I\!\!B}
\def\vA{\mathtt{v}}
\def\thetas{\theta^{*}}
\def\thetad{\theta^{\circ}}
\def\Thetas{\Theta}
\def\reps{\epsilon}
\def\eps{\epsilon}
\def\VP{V}
\def\VPc{\VP_{0}}
\def\vp{\mathbf{v}}
\def\lambdaB{{\lambda}^{*}}
\def\expzeta{\mathfrak{s}}
\def\rr{\mathtt{r}}
\def\zz{\mathfrak{z}}
\def\dimA{\mathtt{p}}
\def\lambdaB{\lambda_{\BB}}
\def\Po{\operatorname{Po}}
\def\dimB{\mathtt{p}_{\BB}}
\def\BB{B}
\def\vp{\mathrm{v}}
\def\epsb{\varepsilon^{\flat}}
\def\eps{\varepsilon}
\def\txiv{\tilde{\xiv}}
\def\fm{f}
\def\sbt{\circ}
\def\Pb{\P^{\sbt}}
\def\Eb{\E^{\sbt}}
\def\bxiv{\xiv^{\sbt}}
\def\sbt{\hspace{1pt} \flat}
\def\xivb{\xiv^{\sbt}}
\def\Varb{\Var^{\sbt}}
\def\oper{\operatorname{op}}
\def\UV{\mathcal{U}}
\def\zq{z}
\def\sbt{\hspace{1pt} \flat}
\def\xivb{\xiv^{\sbt}}
\def\Varb{\Var^{\sbt}}
\def\dPsi{\delta_{\Psi}}
\def\supA{\lambda}
\def\td{\delta}
\def\dimB{\dimA}
\def\vpB{\vp}
\def\lambdaB{\supA}
\def\zqc{\zq_{c}}
\def\epsb{\eps^{\sbt}}
\title{Bootstrap for change point detection}
\author{Nazar Buzun, Valeriy Avanesov \\ \{buzun,avanesov\}@wias-berlin.de}
\begin{document}
  
  \maketitle
  
  \begin{flushright}
  \end{flushright}
  
  \section{Introduction} 
  The problem of change point detection appears each time one needs to explore a set of random data and make a decision about homogeneity of its structure. In other words, the problem can be stated as two following questions: were there any structural changes in the nature of observed data? At which moments, if so?  The present work mainly focuses on the \textit{sequential} or \textit{online} change point detection. In this case the data is aggregated from running random process. 
Formally a time moment $\tau$ is a \textit{change point}, if stochastic properties of the observed signal $\{ Y_t \}_{t=1}^{n}$  have undergone changes in its distribution: 
\[
\begin{cases}
Y_t \backsim \P_1 & t < \tau, \\ 
Y_t \backsim \P_2 & t \geq \tau. 
\end{cases} 
\]
The goal is to find such structural breaks as soon as possible. Such problem arises across many scientific areas: quality control \cite{lai1995sequential}, cybersecurity \cite{Cyber1}, \cite{Cyber2}, econometrics \cite{SpokoinyCP}, \cite{Econom2}, geodesy e.t.c. 
Article \cite{shiryaevOptimum} describes classical results in change point detection theory. Overview of the state-of-art methods are presented in \cite{ReviewPolun} and \cite{Shiryaev}.

This research considers sequential hypothesis testing, in which each hypothesis ($\P_1 = \P_2 $) monitors the presence of change point  through Likelihood Ratio Test (LRT) using \textit{sliding window}.  At each time step the procedure extracts a data slice, splits it in two parts of equal size and executes LRT on it. High values of LRT indicate  possible distribution difference in the window parts $(\P_1 \neq \P_2)$. 
Procedures with LRT are rather popular in related literature. 
The work \cite{quandt1960tests} proposes application of LRT for detection of breaks in linear regression model. It was further developed by many authors, e.g. \cite{haccou1987likelihood}, \cite{srivastava1986likelihood}. 
Papers \cite{liu2008empirical}, \cite{zou2007empirical} investigate LRT for change point detection for nonparametric case. Nonparametric approaches are  easily adaptable for complex data but  in general they need more information for model building  than their parametric alternatives. 
Introduction of \textit{parametric assumption}: $\P_1, \P_2 \in \{\P(\theta): \theta \in  \R^p\}$ allows to reduce the suffisient number of observations as soon as $\P(\theta)$ has less degrees of freedom than nontapametric model. The state-of-the-art review of parametric models based on LRT and its application to economics and bio-informatics are presented by \cite{ParStatChen}.  The paper \cite{gombay2000sequential} explores how LRT can be used for sequential change point detection in case $\P(\theta)$ is exponential family.  
 
The LRT  statistic requires its quantiles or critical values to be set from the signal data~$\{ Y_t \}_{t=1}^{n}$.   Many works are dedicated to asymptotic behaviour of LRT, e.g. \cite{jandhyala1999capturing} obtains lower and upper bounds for distribution of asymptotic maximum likelihood estimator. The work \cite{kim1994tests} provides a very detailed study of its asymptotic behaviour in linear regression models. Similar results for change in mean of a Gaussian process are given in \cite{fotopoulos2010exact}. In \cite{biau2016} an approach with Wiener process and Donsker--Prohorov Theorem   describes relatively general method for LRT-like statistics distribution approximation.

Instead of asymptotic distribution for LRT one may find a benefit of  resampling and \textit{bootstrap}. This technique is popular, e.g. \cite{multiscaleCP1}, \cite{SpokoinyCP}, since it provides a way to simulate a complex distribution of LRT statistic (for wide family of $\P(\theta)$) through empirical data distribution. 
Using bootstrap one can generate  LRT$^{\flat}$ statistic multiple times in order to obtain quantile distribution of the initial LRT. Both LRT and LRT$^{\flat}$ statistics have  (ref. Sections \ref{lrt_main} and \ref{boot_WF}) approximation with the following forms with high probability
\[\tag{Qf}
\label{lrt_qf}
\text{LRT} \approx \| \xiv + \Delta \|, 
\quad 
\text{LRT}^{\flat} \approx \| \xiv^{\flat} + \Delta^{\flat} \|.
\]
Larger $\Delta$ values correspond to more pronounced hypothesis  rejection  (more  apparent changes in data sequence). Argument $\xiv$ could be treated as a noise component. For LRT critical value calibration one requires data without change points and consequently with $\Delta = 0$. 
Section \ref{sec:procedure} contains description of a modified LRT which enable the calibration even if data contains change points.

The cornerstone of the novel change point detection procedure is the concept of change-point pattern. The geometry of a pattern depends on a type of transition region between two distributions that the data obeys before and after a change respectively. Three examples are presented at the Fig.~\ref{fig:patterns}. The triangle (spades) pattern appears in case of an abrupt transition from $\P(\theta_1)$ to $\P(\theta_2)$. A smooth transition between two distributions  entails trapezium change-point pattern. And a horn pattern appears due to an abrupt change in variance. Processing of a change-point pattern instead of a single LRT-value allows to reduce noise influence $\xi(t)$ and false-alarm rate. The presence of change-point patterns is the corollary of (\ref{lrt_qf}) representation.

\begin{figure}[!h]
    \centering
    \includegraphics[width=0.4\textwidth, height=0.25\textwidth]{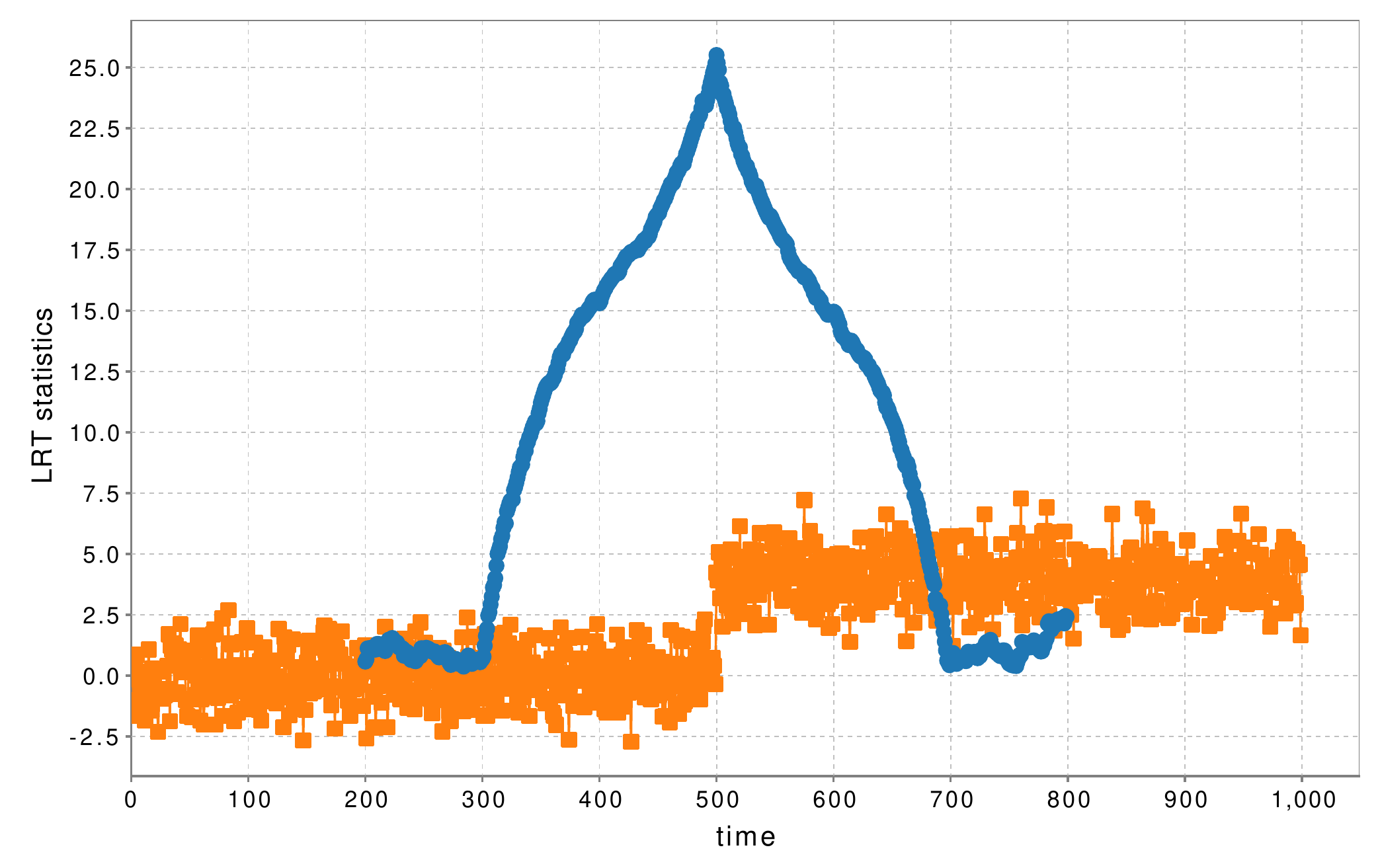}
    \includegraphics[width=0.4\textwidth, height=0.25\textwidth]{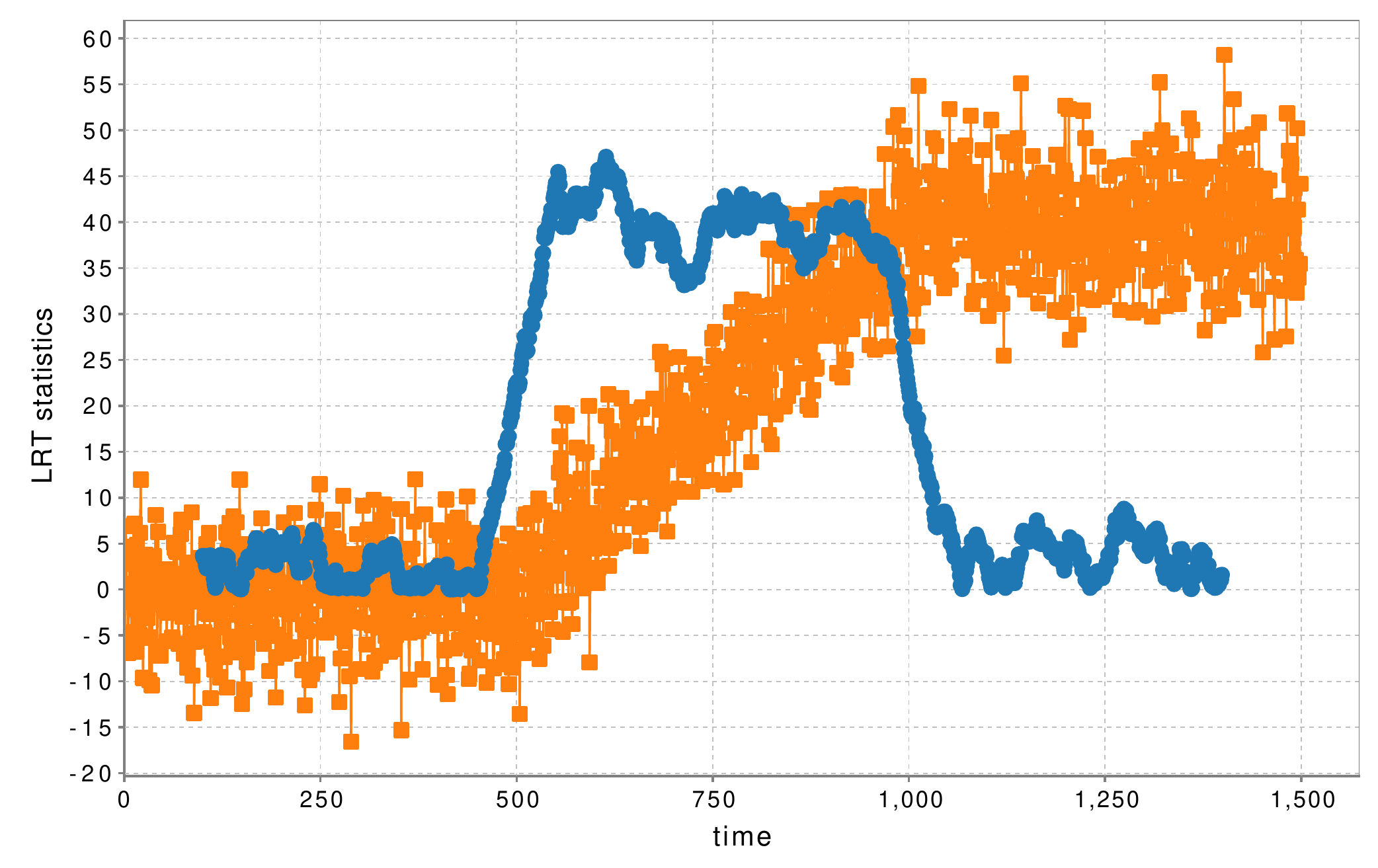} \\
     \includegraphics[width=0.4\textwidth, height=0.25\textwidth]{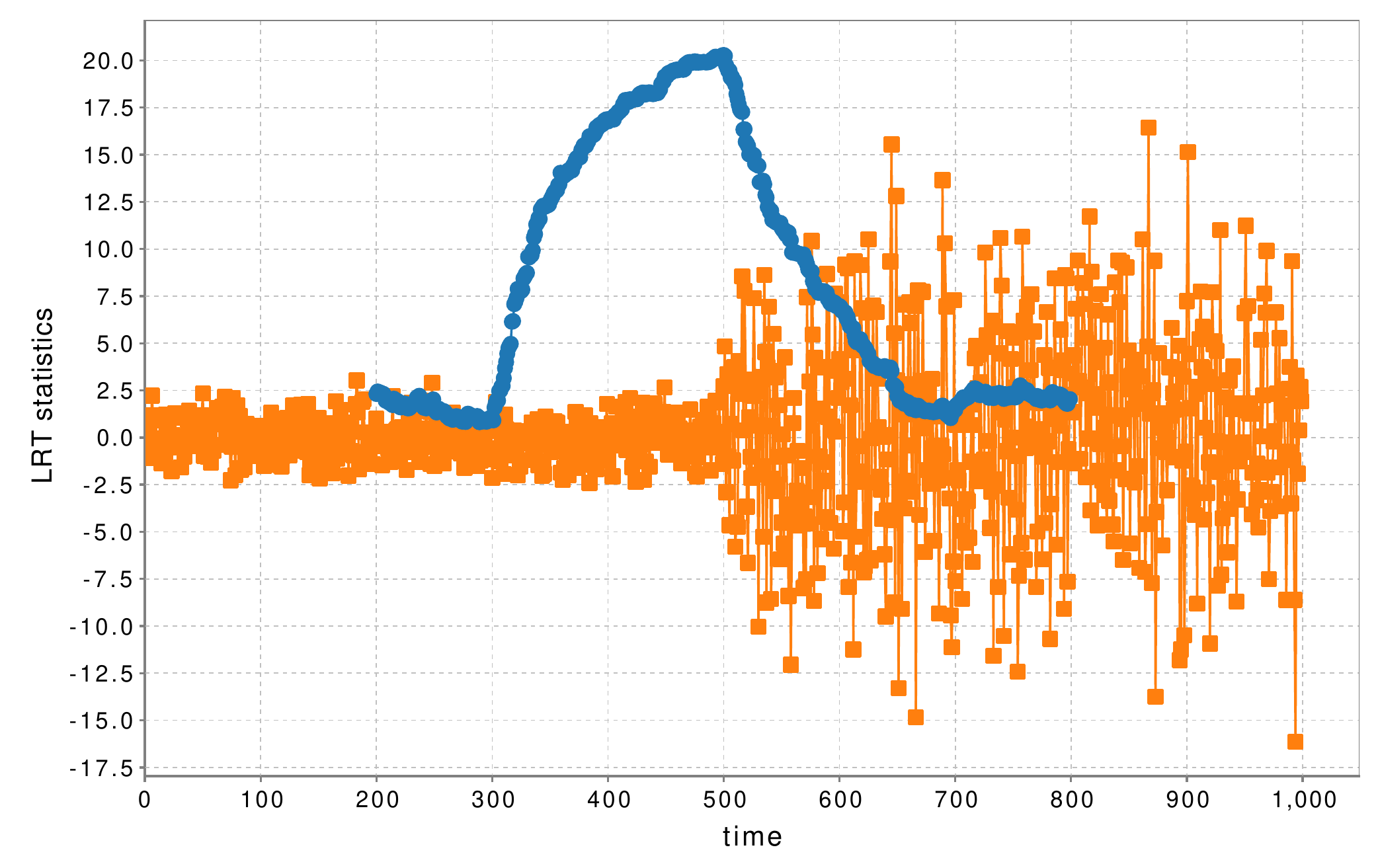}
    \caption{Types of change point and the geometry of change-point patterns: triangle pattern -- abrupt mean transition, trapezium pattern -- smooth mean transition, horn pattern -- abrupt variance transition. }
    \label{fig:patterns}
\end{figure}
In case of a single change point one may  find the pattern position by maximising convolution with a  pattern function $P_\tau(t)$ (ref. Section \ref{sec:procedure} for details): 
\[ \argmax_\tau \sum_{t} P_\tau (t) \|  \xiv(t) + \Delta(t) \|. \]

In order to set critical value correctly quantiles  of the statistic $\max_\tau \sum_{t} P_\tau (t) \|  \xiv(t) \|$ should be close in distribution to quantiles of $\max_\tau \sum_{t} P_\tau (t) \|  \xivb(t) \|$.  Assuming that $\xiv(t) \approx \sum_{i = t -h}^{t+h} \xiv_i$ (independent random vectors sum),  we have made  probability measures comparison using technique from article \cite{GarMaxSum}.  In Section \ref{sm_max_sec} we prove Bootstrap approximation  illustrating on the way useful mathematical concepts  such as Linderberg telescoping sums, anti-concentration of normal vector, Slepian bridge and empirical covariance matrix deviations. Section \ref{complex_max} extends  the statements  for statistics of  type $\max_{t} Q_t(\xiv)$ (in particular  $\max_\tau \sum_{t} P_\tau (t) \|  \xiv(t) \|$, Section~\ref{lrt_boot_prec}).   

The last part of this paper (Section \ref{glm_sec}) contains some specification for aforementioned results with  generalised linear models (GLM).   
  \section{Procedure}
  
\label{sec:procedure}
This section provides  description of the Change Point Detection algorithm which employs Likelihood Ratio Test (LRT). Let $(\P(\theta), ~\theta \in  \R^p, \; L(\theta) = \log (\partial^n\P(\theta)/ \partial Y) )$ be a parametric assumption about the nature of data inside the window $(Y_{t-h},\ldots, Y_{t + h - 1})$ with central point $t$ and size $2h$. Here and further we assume, that the observations $\{Y_i\}_{i=1}^n$ are independent, so
\[\tag{L}
L(\theta, \Ybb) = \sum_i l_i(\theta).
\]
Denote argmax of the Likelihood function and the ``real'' model parameter  value as follows 
\[
\opttheta = \argmax_{\theta} L(\theta, \Ybb),
\quad
\thetas = \argmax_{\theta} \E L(\theta, \Ybb).
\]
The  algorithm sequentially computes  LRT statistic ($\dLh(t)$) for each~$t$ in the sliding window procedure. 
The LRT statistic itself corresponds to the gain from window split into two parts ($\Ybb_l, \Ybb_r$):  
\[\tag{T}\label{lrt_stat}
\dLh(t) = L(\opttheta_l , \Ybb_l) + L(\opttheta_r; \Ybb_r) - L(\opttheta, \Ybb) ,
\]
\[
\Ybb_l = (Y_{t-h},\ldots, Y_{t - 1}),
\quad 
\Ybb_r = (Y_{t},\ldots, Y_{t + h - 1}),
\]
\[
\opttheta_l = \argmax_{\theta} L(\theta, \Ybb_l), 
\quad
\opttheta_r = \argmax_{\theta} L(\theta, \Ybb_r)
\]
According to the Theorem \ref{lrt_th}, encountering change point, statistic $ 2 \dLh(t) \approx \normp{ \xiv(t) + \Delta(t) }^2$ starts growing according to change point pattern type (for example spades, trapezium, horn, ref. the Figure \ref{fig:patterns}). In order to match pattern  positions, the procedure monitors $2h$ values of the LRT simultaneously and convolves them with each of the predefined pattern functions $P_\tau(t)$:
\[\tag{TP}\label{tp_stat}
\dLhConv(\tau) = \sum_{t}^{} P_\tau(t) \sqrt{2 T_{h}(t) }.
\]
High values of $\dLhConv(\tau)$ correspond to a sufficient correlation of $\sqrt{2 T_{h} }$ and $P_\tau$  (similar to the dependence on~$t$).
The algorithm marks a time moment $\tau$ at a scale $h$ as a change point, if the test statistic $\dLhConv(\tau)$ exceeds a calibrated (by bootstrap procedure) critical value $z_h$:
\[
\{\tau \text{  is  a   change   point }\} \Leftrightarrow \{\exists h:  \dLhConv(\tau) > z_h\}.
\]
The greater window size $h$ is chosen,  the more probably the algorithm  will mark $\tau$ as a change point. Again, small windows may mark $\tau$  faster.

\textit{Weighted bootstrap procedure} enables resampling of the statistic $\max_{1 \leq  \tau \leq n}  \dLhConv(\tau) $  and thus calculation of the critical value $z_h$ for the window size $2h$. It generates a sequence of weighted likelihood functions, where each element is a convolution of independent likelihood components  and weight vector $(\ub_1,\ldots,\ub_{n})$:
\begin{equation*}\label{Lb}\tag{Lb}
L^{\flat}(\theta, \Ybb) = \sum_{i} \ub_i l_i(\theta),
\end{equation*}
where $\{\ub_i\}_{i = 1}^n$ are i.i.d. and $\ub_i \in \ND(1,1)$.
At each weights generation one gets a new value of $L^{\flat}(\theta)$ and its optimal parameter $\theta^{\flat}$ and thus bootstrap procedure enables to estimate $L(\opttheta)$ fluctuations.  The corresponding bootstrap LRT statistic is
\begin{align*}\label{Tb}\tag{Tb}
T_{h}^{\flat}(t) &= L^{\flat}(\theta_l^{\flat}, \Ybb_l) + L^{\flat}(\theta_r^{\flat}, \Ybb_r) -\sup_{\theta}\{L^{\flat}(\theta, \Ybb_l) + L^{\flat}(\theta + \opttheta_r - \opttheta_l, \Ybb_r)\},
\end{align*}
\[
\theta^{\flat} = \argmax_{\theta} L^{\flat}(\theta, \Ybb).
\]
Parameter $(\opttheta_r - \opttheta_l)$ is required for  condition  $T_{h}^{\flat} \approx \normp{\xi^{\flat}}$ (ref. Theorem \ref{lrt_boot}). In this  case one can estimate $\max_{1 \leq  \tau \leq n} \dLhConv^{\flat}(\tau) $ quantiles under the null hypothesis  $(\Delta^{\flat}(t) \propto \opttheta_r(t) - \opttheta_l(t))$ instead of the false assumption $(\Delta^{\flat}(t) = 0)$.

\textit{Empirical bootstrap} version generates subsamples of data $\{Y_k\}$ from the complete dataset with random independent indexes of size $n$. In this case 
\[\tag{Le}\label{Le}
L^{\epsilon}(\theta, \Ybb) = \sum_{i }^{} l_{k(i)}(\theta),
\]
where $\{k(i)\}_{i = 1}^n$ are i.i.d. and $k(i) \in \{ 1, \ldots, n \}$. For all window positions $\opttheta_r = \opttheta_l = \opttheta$ and here  bias correction is not required. So the corresponding  LRT statistic is like (\ref{lrt_stat}):
\begin{align*}\label{Tbe}\tag{Te}
T_{h}^{\epsilon}(t) &= L^{\epsilon}(\theta_l^{\epsilon}, \Ybb_l) + L^{\epsilon}(\theta_r^{\epsilon}, \Ybb_r) - L^{\epsilon}(\theta^{\epsilon}, \Ybb),
\end{align*}
\[
\theta^{\epsilon} = \argmax_{\theta} L^{\epsilon}(\theta, \Ybb).
\]
Empirical bootstrap works better in the application but less suitable for theoretical investigations (the distribution is discontinuous).

  \section{Main  results}

Below we present the Theorems that describes difference between probabilistic measures of $\dLhconv$ and   $\dLhconvb$ (precision of the bootstrap calibration) and LRT  sensitivity to parameter $\thetas$ transition at change point. In independent models  
each noise vector $\xiv_{lr}(t) = \xiv(t) \in \R^{p}$ is a sum of independent vectors (ref. Section \ref{lrt_main} for $\xiv_{lr}(t)$ definition)
\[
\xiv_{lr}(t) = \sum_{i = t - h}^{t -1 } \xiv_i - \sum_{i = t }^{t + h -1} \xiv_i,
\quad 
\xiv_i \propto \nabla l_i(\thetas).
\]
Aggregate all $\xiv_i$ into one vector  
\[
\xiv^T = (\xiv_1^T, \ldots , \xiv_n^T).
\]
\begin{theorem} 
\label{boot_approx_main}
Let dataset size be $n$ and the window equal to $h$. Include conditions from lemmas \ref{lrt_th}, \ref{lrt_boot} and \ref{boot_approx}. Then for each fixed $z$
 \[
  \left | \P \left( \max_{1 \leq \tau \leq n} \dLhconv > z  \right) - \Pb \left( \max_{1 \leq \tau \leq n} \dLhconvb  > z  \right)  
\right | \leq \Delta_{TV}  
\]
\[
\Delta_{TV} =   C_1 \mu_Z +  C_2 \|\Var(\xiv_{lr}) - \Varb(\xivb_{lr})  \|^{1/2}_{\infty} + 7  C_A ( \diamondsuit  +  \diamondsuit^{\flat}  ) ,
\]
\end{theorem}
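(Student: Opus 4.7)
The plan is to chain three approximations together. First, by Lemma~\ref{lrt_th} applied at every window centre $t$ in the maximum over $\tau$, the test statistic admits the quadratic approximation $\sqrt{2\dLht} \approx \|\xiv_{lr}(t)\|$ under the null hypothesis, with the aggregate error across all $t$ controlled (after a union bound encoded in the constant $C_A$) by the term $\diamondsuit$. Symmetrically, Lemma~\ref{lrt_boot} gives $\sqrt{2\dLhb(t)} \approx \|\xivb_{lr}(t)\|$ with error $\diamondsuit^{\flat}$. Plugging these into the convolution $\dLhconv(\tau) = \sum_t P_\tau(t)\sqrt{2\dLht}$ reduces the problem to bounding
\[
\Bigl|\,\P\bigl(\max_\tau \textstyle\sum_t P_\tau(t)\|\xiv_{lr}(t)\| > z\bigr) - \Pb\bigl(\max_\tau \textstyle\sum_t P_\tau(t)\|\xivb_{lr}(t)\| > z\bigr)\Bigr|.
\]
The passage from $\{\dLhconv > z\}$ to the surrogate event is made rigorous by swelling the threshold $z$ by $C_A\diamondsuit$ (respectively $C_A\diamondsuit^{\flat}$) on a high-probability event and then invoking anti-concentration to absorb the threshold perturbation into the term $C_1\mu_Z$. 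The factor $7$ in front of the $\diamondsuit$ terms just reflects the finite number of approximation steps (left window, right window, joint window, bootstrap counterparts) each contributing once.

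Second, since $\xiv_{lr}(t)$ and $\xivb_{lr}(t)$ are each centered sums of independent random vectors indexed by $t$, I would appeal to Lemma~\ref{boot_approx} — this is exactly the Gaussian-comparison result developed in Section~\ref{sm_max_sec} and extended to the nonlinear functional $Q_\tau(\xiv) = \sum_t P_\tau(t)\|\xiv_{lr}(t)\|$ in Section~\ref{complex_max}. The method, following \cite{GarMaxSum}, proceeds by Lindeberg telescoping: replace the summands of $\xiv$ one by one by Gaussian vectors of matching covariance and then by the bootstrap counterparts, using smoothed max approximations so that third-order Taylor terms can be estimated by $\|\Var(\xiv_{lr})-\Varb(\xivb_{lr})\|_\infty$. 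Smoothing is removed via Nazarov-type anti-concentration for the Gaussian maximum, which is precisely the role of $\mu_Z$. The covariance discrepancy then enters as $\|\Var(\xiv_{lr}) - \Varb(\xivb_{lr})\|_\infty^{1/2}$ because of the square-root relation between second-order couplings and the Kolmogorov distance in Slepian/Gaussian-interpolation bounds.

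Third, I would carefully combine the two reductions: a triangle inequality on probabilities across the chain
\[
\dLhconv \;\rightsquigarrow\; Q_\tau(\xiv) \;\rightsquigarrow\; Q_\tau(\gaussv) \;\rightsquigarrow\; Q_\tau(\gaussv^{\flat}) \;\rightsquigarrow\; Q_\tau(\xivb) \;\rightsquigarrow\; \dLhconvb,
\]
where $\gaussv, \gaussv^{\flat}$ are matched-covariance Gaussian surrogates. Each arrow contributes either an LRT approximation error ($C_A\diamondsuit$ or $C_A\diamondsuit^{\flat}$, two of each kind for the left and right windows), a Gaussian approximation error absorbed into $C_1\mu_Z$ via anti-concentration, or the Slepian step producing the covariance norm. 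Summing these yields exactly the stated bound $\Delta_{TV}$.

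The main obstacle will be the middle Slepian step, because the statistic $Q_\tau(\xiv)$ is not smooth in $\xiv$ (it involves norms and then a maximum), so naive third-derivative bounds explode. I would handle this by smoothing $\|\cdot\|$ with a soft-max and smoothing the outer $\max_\tau$ with a log-sum-exp, controlling both smoothing parameters so that the anti-concentration penalty $\mu_Z$ and the smoothness penalty trade off optimally — this is where the exponent $1/2$ on the covariance discrepancy appears, and where Section~\ref{complex_max} does the heavy lifting.
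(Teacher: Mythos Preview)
Your approach is the same as the paper's, which literally reads ``the proof is a direct consequence of Theorems~\ref{lrt_th}, \ref{lrt_boot} and \ref{boot_approx}'': reduce $\dLhconv$ and $\dLhconvb$ to their quadratic surrogates via the LRT expansions, then invoke the Gaussian-approximation/Gaussian-comparison machinery of Theorem~\ref{boot_approx} on the surrogates. Your chain diagram is exactly right.

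One piece of bookkeeping is muddled and would bite you when you write details. You say the threshold is shifted by $C_A\diamondsuit$ and then absorbed into $C_1\mu_Z$, and separately that the factor $7$ counts ``left window, right window, joint window, \ldots''. That is not how the term $7C_A(\diamondsuit+\diamondsuit^{\flat})$ arises. Theorem~\ref{lrt_th} already delivers, uniformly in $t$, the bound $\bigl|\sqrt{2\dLht}-\|\xiv_{lr}(t)\|\bigr|\le 7\diamondsuit$ (the $7$ is produced inside that proof by summing the Wilks/Fisher errors for $L_l$, $L_r$, $L$). Since $\sum_t|P_\tau(t)|=1$, this gives $\bigl|\dLhconv(\tau)-\sum_tP_\tau(t)\|\xiv_{lr}(t)\|\bigr|\le 7\diamondsuit$ uniformly in $\tau$, hence the threshold in the \emph{probability} shifts by $7\diamondsuit$. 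Anti-concentration (Lemma~\ref{anti_conc_ext}) with constant $C_A$ then converts that threshold shift into the additive probability cost $7C_A\diamondsuit$; the analogous argument on the bootstrap side via Theorem~\ref{lrt_boot} contributes $7C_A\diamondsuit^{\flat}$. These two terms stand on their own in $\Delta_{TV}$ and are \emph{not} merged into $C_1\mu_Z$; the $C_1\mu_Z$ term comes purely from the Lindeberg/Gaussian-approximation step inside Theorem~\ref{boot_approx}, and the $C_2\|\Var-\Varb\|_\infty^{1/2}$ term from the Slepian/Gaussian-comparison step (Lemma~\ref{gc_lemma}). Fix that accounting and your write-up will match the paper's.
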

where
\[
\| \Var(\xiv_{lr}) - \Varb(\xivb_{lr})  \|_{\infty} \leq  10 \sqrt{\log(np)} \sqrt{2h}   \| \Var(\xiv) \|_{\infty}  ( 3 + \| b \|) +  \|b \|^2, 
\]
\[
 \| b \|^2 = \max_{t} \sum_{i = t}^{t + 2h} \| \E \xiv_i \|^2_{\infty}	, 
\]
\[
\mu_Z^{3} \leq 2h \E \normp{\xiv}^3_{\infty} ,
\]
\[
C_1 = 5 C_{\mu}^{1/3} C_A, 
\quad 
C_2 = 4 C^{1/2}_{\Sigma} C_{A}.
\]
Constants  $ C_{\mu} \sim \log^2 (n) $, $ C_{\Sigma} \sim \log (n) $ and $C_A \approx  p^{3/2} \log (n)$  are described in Section \ref{lrt_boot_prec}. 

The proof is a direct consequence of Theorems \ref{lrt_th}, \ref{lrt_boot} and \ref{boot_approx}.

\begin{remark}

\begin{enumerate}

\item Parameters   asymptotic 
\[
\mu_Z  \sim  \frac{\log^{1/2}(n)}{(2h)^{1/6}}, 
\quad
\|\Var(\xiv_{lr}) - \Varb(\xivb_{lr})  \|^{1/2}_{\infty} \sim  \frac{\log^{1/4}(n)}{(2h)^{1/4}}, 
\quad
\diamondsuit  +  \diamondsuit^{\flat} \sim \frac{p}{h^{1/2}}.
\]

\item For quantile estimation of the statistic $ \max_{1 \leq \tau \leq n} \dLhconv $ with  quantile of $\max_{1 \leq \tau \leq n} \dLhconvb $ one has to show that 
\[
\left | \P \left( \max_{1 \leq \tau \leq n} \dLhconv > z^{\flat}(\alpha) \right) - \alpha
\right |  \leq \Delta_{TV}, 
\]
for $z^{\flat}(\alpha)$ defined by equation 
\[
\Pb \left( \max_{1 \leq \tau \leq n} \dLhconvb  > z^{\flat}(\alpha)  \right)  = \alpha.
\]
This statement is a consequence of the Theorem (\ref{boot_approx_main}) but not a direct one since the argument $z^{\flat}(\alpha)$ is random and depends on $\max_{1 \leq \tau \leq n} \dLhconv$. Involving sandwich Lemma \ref{sandwich} fulfills this issue.

\end{enumerate}

\end{remark}

The next part of this Section evaluates the smallest parameter $\thetas$ transition that is sufficient for change point detection in a fixed position $\tau$ and window size $2h$. Let $z_h(\alpha)$ be a  quantile of $\sum_t P_\tau (t) \| \xiv_{lr}(t) \|$ such that
\[
\P \left( \sum_t P_\tau (t) \| \xiv_{lr}(t) \| > z_h(\alpha) \right  ) = \alpha  \leq \frac{\Var (\sum_t P_\tau (t) \| \xiv_{lr}(t) \| )}{( z_h(\alpha) - \sum_t P_\tau (t) \E \| \xiv_{lr}(t) \|   )^2}.
\] 
Section \ref{tpvar} provides upper bound for $\alpha$ and is  summarized in following statement.

\begin{theorem} Let $\sum_t P_\tau(t) = 0$ and $\E \| \xiv_{lr}(t) \|  = \sqrt{p}$.
The sufficient condition for abrupt type change point detection of size $\Delta$ with probability $1 - e^{-\xx}$  in position $\tau$ using triangle pattern (\ref{tr_patt})~is
\[
\normp{D_{lr} (\theta^*_r - \theta^*_l)(\tau) } =  \Delta >  5  p^{1/4} (\xx + \log(2h) )^{1/4} e^{\xx/2} + 21 \diamondsuit, 
\]
where matrix $D_{lr}$ is defined in Theorem \ref{lrt_th}.
\end{theorem}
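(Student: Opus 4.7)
My plan is to lower-bound the pattern convolution $\dLhconv(\tau)$ at the candidate change point, by separating it into a deterministic signal driven by $\Delta = \normp{D_{lr}(\theta_r^*-\theta_l^*)(\tau)}$ and a mean-zero stochastic noise, and then to compare that signal against the Chebyshev-type critical value $z_h(\alpha)$ displayed right before the theorem.

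First I would apply Theorem~\ref{lrt_th} uniformly over the triangle pattern support around $\tau$ to replace $\sqrt{2T_h(t)}$ by $\normp{\xiv_{lr}(t) + D_{lr}(\theta_r^*-\theta_l^*)(t)}$ up to an additive error of order $\diamondsuit$ at each $t$. Summing against $P_\tau$ and noting that $\sum_t |P_\tau(t)| = O(1)$ for a normalised triangle pattern yields
\[
\dLhconv(\tau) \;\geq\; \sum_{t} P_\tau(t)\,\normp{\xiv_{lr}(t)+\Delta(t)} \;-\; c_1\,\diamondsuit.
\]
Next I would exploit $\sum_t P_\tau(t)=0$ together with the assumption $\E\normp{\xiv_{lr}(t)}=\sqrt{p}$ to centre each summand. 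Writing $a(t) \eqdef \E\normp{\xiv_{lr}(t)+\Delta(t)}$, the sum decomposes as
\[
\sum_t P_\tau(t)\normp{\xiv_{lr}(t)+\Delta(t)} \;=\; \underbrace{\sum_t P_\tau(t)\,[a(t)-\sqrt{p}\,]}_{S(\tau)} \;+\; \underbrace{\sum_t P_\tau(t)\bigl(\normp{\xiv_{lr}(t)+\Delta(t)} - a(t)\bigr)}_{N(\tau)}.
\]
For a triangle-shaped profile of $\Delta(t)$ at an abrupt change, matched to $P_\tau$, the bump $a(t)-\sqrt{p}$ peaks at $\tau$ and a direct computation with the reverse triangle inequality should produce a signal $S(\tau) \geq c_2\,\Delta$ with an explicit pattern constant $c_2$, at least once $\Delta$ is of the same order as $\sqrt{p}$.

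To control $N(\tau)$ I would use that $\normp{\cdot}$ is $1$-Lipschitz, so $\Var(N(\tau))$ is majorised by $\Var\!\bigl(\sum_t P_\tau(t)\normp{\xiv_{lr}(t)}\bigr)$, and Section~\ref{tpvar} supplies a bound of the form $V \lesssim \sqrt{p}\,\sqrt{\xx+\log(2h)}$. Combining this with the Chebyshev bound $z_h(\alpha) \leq \sqrt{V/\alpha}$ from the preamble of the theorem, and setting $\alpha = e^{-\xx}$, gives $z_h(\alpha) \leq \sqrt{V}\,e^{\xx/2}$; a second Chebyshev application controls $|N(\tau)| \leq \sqrt{V}\,e^{\xx/2}$ with probability at least $1-e^{-\xx}$. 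The detection event $\dLhconv(\tau) > z_h(\alpha)$ then follows from $c_2\,\Delta > 2\sqrt{V}\,e^{\xx/2} + c_1\,\diamondsuit$, and tracking the numerical constants gives the stated threshold $5\,p^{1/4}(\xx+\log(2h))^{1/4}e^{\xx/2} + 21\,\diamondsuit$.

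The hard part will be the lower bound $S(\tau) \geq c_2\,\Delta$: one must show that the centred bump $a(t)-\sqrt{p}$ really behaves like $\normp{\Delta(t)}$ near $\tau$ rather than being washed out by the $\sqrt{p}$ baseline fluctuations of $\normp{\xiv_{lr}(t)+\Delta(t)}$. This likely requires handling separately the regime $\normp{\Delta(t)}\gtrsim\sqrt{p}$, where the reverse triangle inequality $a(t)\geq \normp{\Delta(t)}-\sqrt{p}$ is immediately useful, and the regime $\normp{\Delta(t)}\lesssim\sqrt{p}$, where a Taylor expansion of $\E\sqrt{\normp{\xiv_{lr}(t)+\Delta(t)}^2}$ around the unperturbed covariance is required; only after this analysis does the convolution with $P_\tau$ transport the signal cleanly into $S(\tau)$ and justify the explicit constant in front of $\Delta$.
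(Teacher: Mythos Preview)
Your approach is more careful than the paper's, which takes a direct shortcut. The paper does not analyse $a(t)=\E\|\xiv_{lr}(t)+\Delta(t)\|$ at all. It treats detection as a pure signal-versus-threshold comparison: on the null side it bounds $z_h(\xx)$ via Chebyshev from $\Var\bigl(\sum_t P_\tau(t)\|\xiv_{lr}(t)\|\bigr)$, using Theorem~\ref{TexpbLGA} to lower-bound each $\|\xiv_{lr}(t)\|^2$ and obtain $\Var\le\frac{2}{3}h^{2}\sqrt{p}\,(\xx+\log 2h)^{1/2}$; on the alternative side it simply drops the stochastic term, writing $\sqrt{2T_h(t)}=(P_\tau(t)+\tfrac12)\Delta\pm 7\diamondsuit$ directly from Theorem~\ref{lrt_th} with $\xiv_{lr}$ discarded. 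Then $\sum_t P_\tau(t)\sqrt{2T_h(t)}\approx\bigl(\sum_t P_\tau^2(t)\bigr)\Delta\approx\frac{h}{6}\Delta$, and the final inequality comes from dividing through by $h/6$ (whence $6\sqrt{2/3}\approx 5$ and $6\cdot\frac72=21$). So the ``hard part'' you flag---the interaction of noise and signal inside $\|\cdot\|$---is not handled in the paper; it is simply omitted. Your decomposition into $S(\tau)+N(\tau)$ is what a rigorous version would need, but it is a genuinely different (and longer) argument, and you should not expect to recover the exact constants $5$ and $21$ that way.

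One concrete slip: for the triangle pattern~\eqref{tr_patt} one has $P_\tau(t)\in[-\tfrac12,\tfrac12]$ over a support of length $2h$, so $\sum_t|P_\tau(t)|$ is of order $h$, not $O(1)$. The $\diamondsuit$ error therefore accumulates to $\frac{7}{2}h\,\diamondsuit$, and likewise the variance bound and $z_h$ both carry a factor $h$; these $h$'s cancel only after dividing by the signal coefficient $\sum_t P_\tau^2(t)\approx h/6$. Your $c_1\diamondsuit$ with $c_1=O(1)$ would not reproduce the $21\diamondsuit$ term.
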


  \section{Experiments}
  In order to substantiate patterns utility we compare procedure from Section \ref{sec:procedure} with the similar one but without pattern (i.e. $P_\tau(t) = \Ind [\tau = t]$). The experiment scenario is follows. The dataset $\{Y_i\}$ consists of $500$ normal random vectors from $\R^{5}$  with one change point at position $\tau^* = 250$. 
\[
Y_{i} \in \ND(0, I_{5}),  
\quad
0 \leq i < 250
\]
\[
Y_{i} \in \ND(0.25, I_{5}),  
\quad
250 \leq i < 500.
\]
The procedure searches for change point location as $\widehat{\tau} =  \argmax_{\tau} \dLhConv(\tau) $. Then the quality of the detection is measured by average error $|\widehat{\tau} - \tau^* |$ (c.p. position error) and fraction of the detected change points  (POWER) (ref. Figure \ref{fig:errpower}). 

\begin{figure}[!h]
    \centering
    \includegraphics[width=0.45\textwidth]{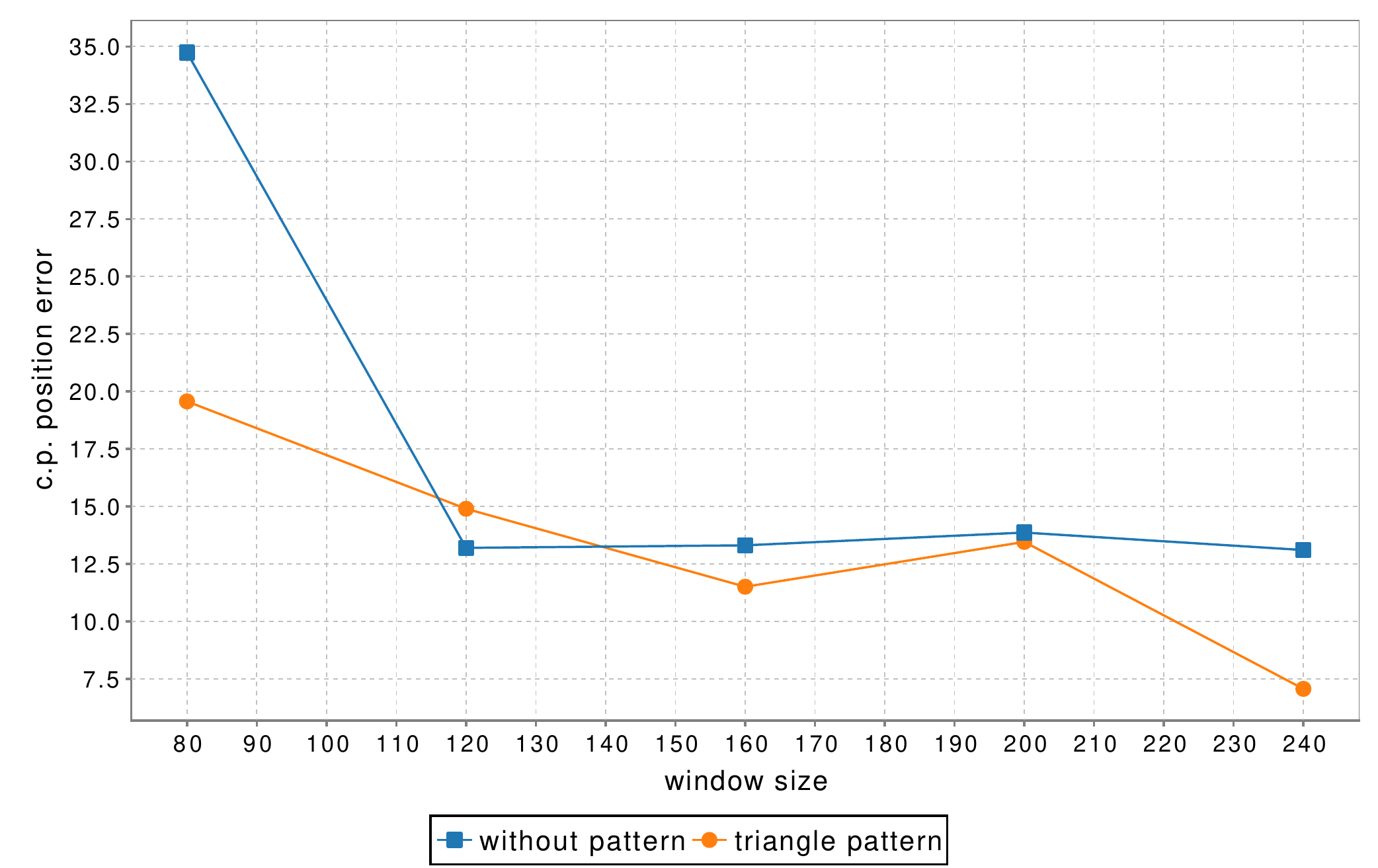}
    \includegraphics[width=0.45\textwidth]{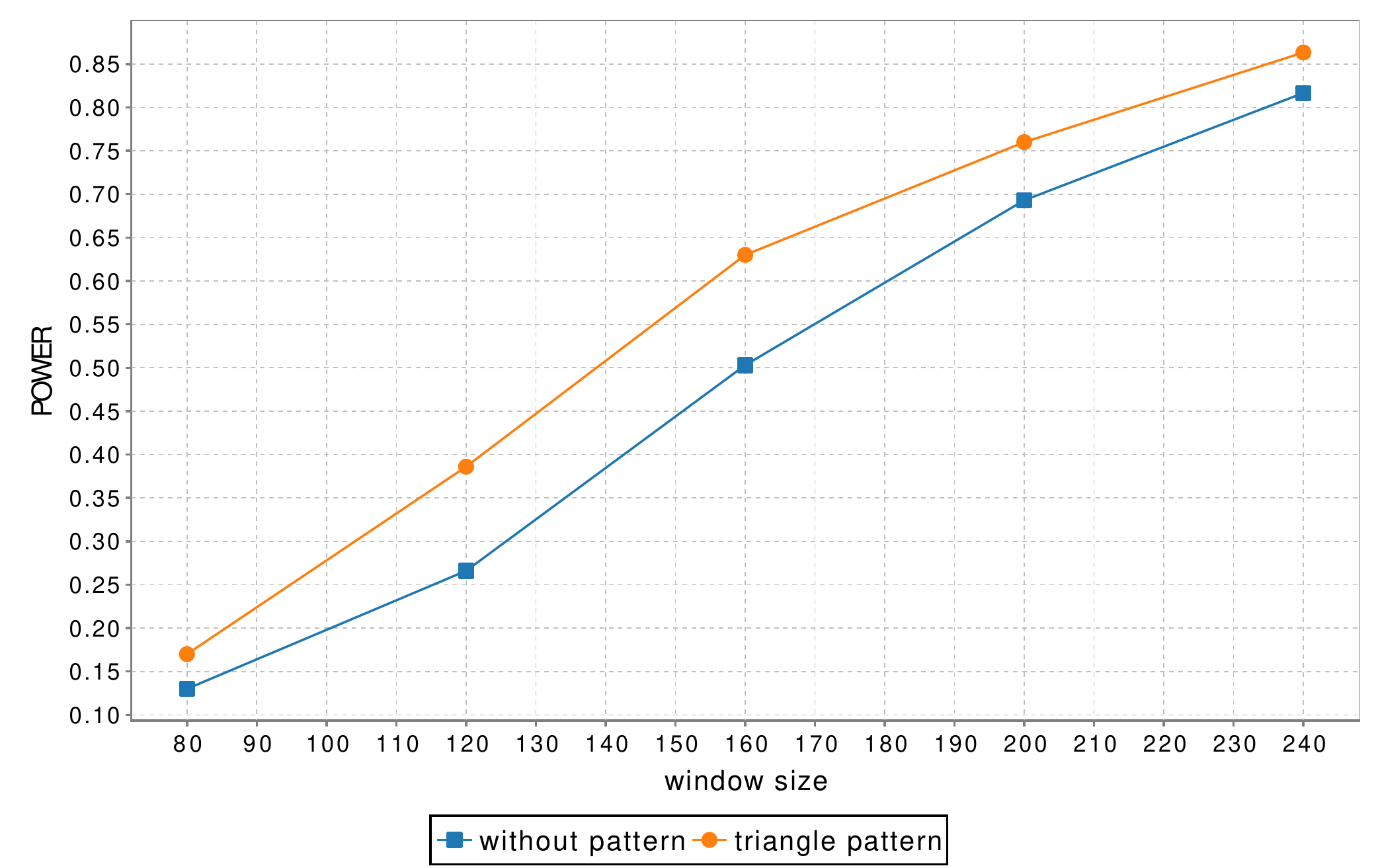}
    \caption{Change point localisation test and power test for the procedure from Section \ref{sec:procedure}. One case with triangle pattern and the other case without pattern.}
    \label{fig:errpower}
\end{figure}

The second experiment describes bootstrap convergence depending on window size ($2h$).  We set bootstrap confidence level equal to $0.1$ and compute p-value from real distribution with bootstrap quantile $z^{\flat}$. 
\[
\Pb \left( \max_{1 \leq \tau \leq n} \dLhconvb  > z^{\flat} \right)   = 0.1,
\]
\[
\left | \P \left( \max_{1 \leq \tau \leq n} \dLhconv > z^{\flat}  \right) - 0.1
\right |  = O\left(\frac{1}{h^{\beta}} \right).
\]  
From the plot below (ref. Figure \ref{fig:bootvalid}) one can observe that 
\[
\beta > \frac{1}{2},
\]
which suppose better convergence in comparison with the theoretical study (ref. Theorem \ref{boot_approx_main}), where $\beta  = 1/6$.
\begin{figure}[!h]
    \centering
    \includegraphics[width=0.8\textwidth]{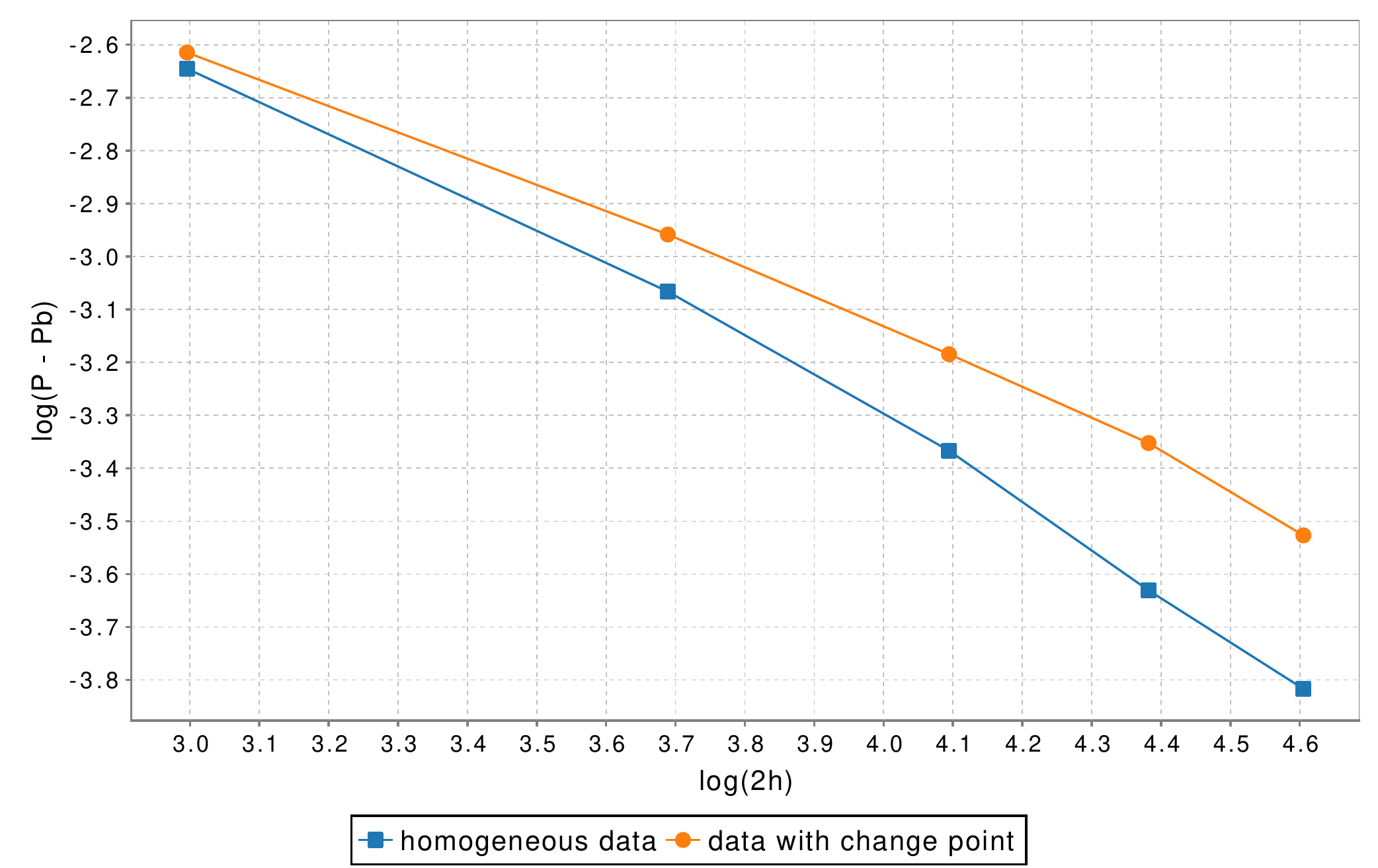}
    \caption{Bootstrap convergence.  Homogeneous data:  $Y_{i} \in \ND(0, I_{p}),  0 \leq i < 6h$, data with change point: $Y_{i} \in \ND(0, I_{p}),  0 \leq i < 3h$, $ $  $Y_{i} \in \ND(0.3, I_{p}),  3h \leq i < 6h$. The parameters are $p = 30$, $h \in \{ 10, 20, 30, 40, 50 \}$. }
    \label{fig:bootvalid}
\end{figure}

The last experimental part presents results of the comparison of the proposed algorithm of change point detection (LRTOffline) \ref{sec:procedure} with two other methods: Bayesian online changepoint detection (BOCPD) from \cite{BayesOnlineWeb} and cpt.meanvar(PELT,$\ldots$) (RMeanVar) from  R package. 
The first method is constructed for online inference, but so far as it returns CP location with each CP signal, it is also applicable for offline testing scenario. The idea of this method is predictive filtering: its forecasts a new data point using only the information have been observed already, where the distribution family is fixed (Normal for the tests in this paper). Bayesian inference calculates the length of the observed data (from the last CP).
The second algorithm also uses preliminary specified model. Its design focuses into finding multiple changes in mean and variance in Normally (another distributions also supported) distributed data. The returned set of change points is the result of sequential testing $H_0$ (existing number of change points) against $H_1$ (one extra change point) applying the likelihood ratio statistic of the whole data coupled with the penalty for CP count.  RMeanVar performs better than well known method CUSUM  due to synchronous changes in both  data parameters mean and variance.

Quality of measurements  uses  Normalised Mutual Information (NMI). The next equation defines 
NMI measure of two partitions ($X$, $Y$) of time range by change points
\[
\text{NMI}(X,Y) = 2 \frac{H(X) + H(Y) - H(X,Y)}{H(X) + H(Y)}.
\]
$H(X)$ and $H(X,Y)$  and entropy functions.  Higher NMI values (they are in $[0,1]$) correspond to better quality. 

Synthetic test data have been generated with different values of the distribution parameter transition ($\Delta$).  Each $\Delta$ value corresponds to 10 sampled data sequences over which one compute measure average.  Each data sequence has two, one or none change points.   The data has two distributions: normal $(\ND(\theta(1), \theta(2)))$ and Poisson $(\Po(\theta))$.  Parametric assumption for all  methods is $\ND(\theta(1), \theta(2))$, so Poisson data corresponds to misspecification  scenario.

\begin{figure}[ht!]
     \begin{center}
        \subfigure{
            \label{fig:Precision_delta_N}
            \includegraphics[width=0.45\textwidth]{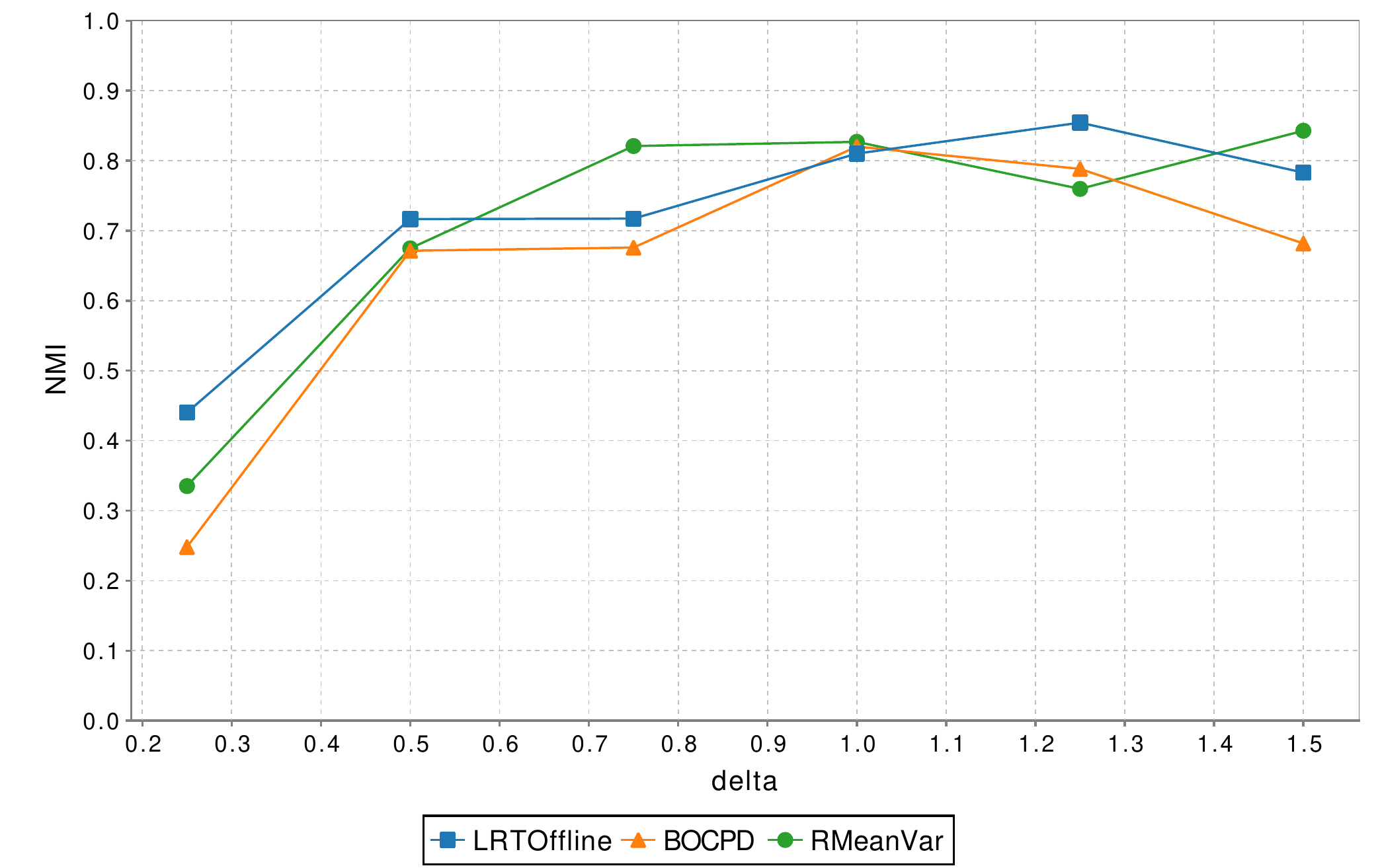}
        }
        \subfigure{
           \label{fig:Recall_delta_N}
           \includegraphics[width=0.45\textwidth]{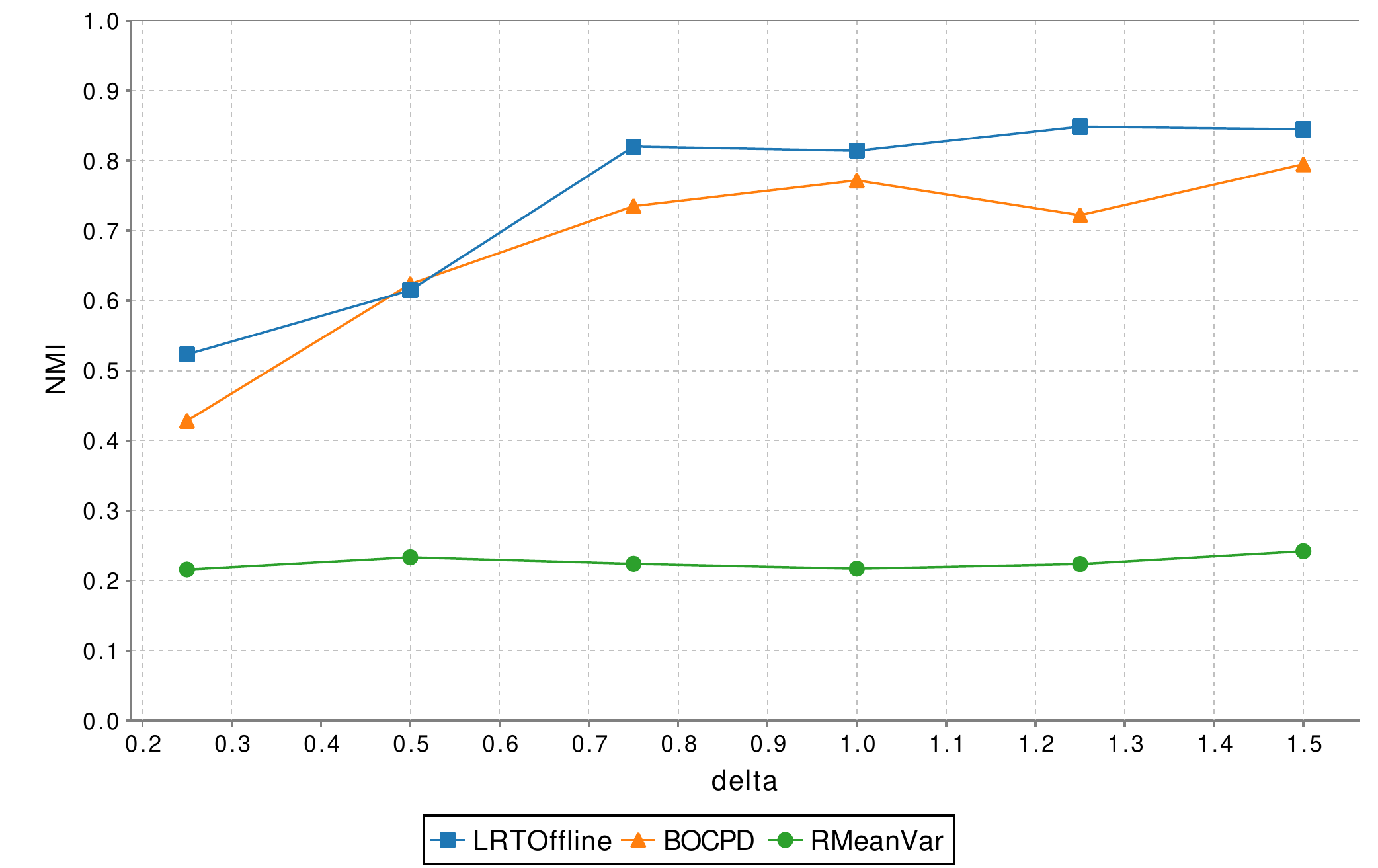}
        }

    \end{center}
    \caption{
         First data: $\ND(\theta(1), \theta(2))$, second data: $\Po(\theta)$,
         data size = 340, parametric assumption for all methods is $\ND(\theta(1), \theta(2))$, NMI -- Normalized Mutual Information between predicted and reference partitions of time interval with change points, change points count per test  $\{0,1,2\}$.
     }
   \label{fig:subfigures}
\end{figure}

In the tests with normal data all the methods achieves similar NMI scores. In the tests with Poisson data (misspecification) RMeanVar has relatively low quality and LRTOffline outperforms slightly BOCPD method. 

Change point detection algorithm (LRTOffline) implementation link: 

https://github.com/nazarblch/cpd

  \bibliography{references}{}
\section{Appendix}
  
  \subsection{Likelihood function restrictions}

\label{likelihood}

Assume that for each data subset $\Ybb = \{Y_i\}_{i=t}^{t + h}$ the likelihood function  $L(\theta) = L(\theta, \Ybb)$  has rather precise approximation by its quadratic Tailor expansion in a local region $\localr$
with  central point $\theta^*$: 
\[
L(\theta) \approx L(\theta^*) + \xiv^T D(\theta - \theta^*) - \frac{1}{2} \normp{D(\theta - \theta^*)}^2,
\]
where
\[
\xiv = D^{-1} \nabla L(\theta^*),
\quad
D^2(\theta) = - \nabla^2 \E L (\theta),
\quad
D = D(\theta^*).
\]
The following functions characterize the error of quadratic approximation:
\[
  \alpha(\theta, \theta_0) = L(\theta) - L(\theta_0)   - (\theta - \theta_0)^T \gradL( \theta_0) +  \frac{1}{2} \Vert D (\theta - \theta_0) \Vert^2, 
\]
\[
\chi(\theta, \theta_0) = D^{-1} \nabla \alpha(\theta, \theta_0) 
= D^{-1} (\gradL(\theta) - \gradL( \theta_0) ) +  D (\theta - \theta_0). 
\]
Let with probability $1 - e^{-\xx}$ their  upper bounds   in region $\localr$  satisfy  conditions 
\begin{equation}\label{cond_A}\tag{A}
\frac{| \alpha(\theta, \theta^*)  |}{\Vert D(\theta - \theta^*) \Vert} \leq \romb,  
\quad
  \Vert \chi(\theta, \theta^*) \Vert \leq  \romb,
\end{equation}
where \[\romb = \{\delta (\rr) + 6 \nunu \omega \zz(\xx, p)  \} \rr,\]
\[
\zz(\xx, p) = \sqrt{6p} + \sqrt{2\xx} + \frac{12p}{g} , 
\]
\[
\omega \sim \frac{1}{\sqrt{h}}, 
\quad
\delta(\rr) \sim \frac{r}{\sqrt{h}},
\quad
g \sim \sqrt{h}.
\]
The stochastic part of the likelihood for independent data has denotation  
\[\tag{Zeta}
\zeta(\theta) = L(\theta) - \E L(\theta)  = \sum_{i = 1}^n \zeta_i(\theta)  
\]
The next restriction for the Fisher matrix $-D^2(\theta)$ and components $\zeta_i(\theta, Y_i)$ deviations ensure condition (\ref{cond_A}).
\begin{equation}\label{cond_dD}\tag{dD}
\Vert D^{-1} D^2(\theta) D^{-1} - I_p\Vert \leq \delta(\rr),
\end{equation}
For all $\normp{\gamma_1} = \normp{\gamma_2} = 1$ and $|\lambda | < g$ and $ i$
\begin{equation*}\label{EDi}\tag{ED2i}
\log \E \text{exp} \left\{ 
\frac{\lambda}{\omega} \gamma^T_1 D^{-1}  \nabla^2 \zeta_i (\theta) D^{-1} \gamma_2 
\right\}
\leq \frac{\lambda^2 \nu_i^2}{2}, 
\quad \sum_i  \nu_i^2  = \nu_0^2.
\end{equation*} 
Condition (\ref{cond_dD}) is responsible for the quadratic approximation of $\E L(\theta)$, so 
\[
\| \nabla \E L(\theta) - \nabla \E L(\thetas)  - D(\theta - \thetas) \| 
\leq \delta(\rr) \rr.
\]
In its turn (\ref{EDi}) enables linear approximation of  $\zeta(\theta)$.   
\begin{lemma}[Deviations of empirical process norm] 
\label{ddZeta}
Let condition \ref{EDi} is fulfilled, then 
in the local region $\localr$ with probability $1- e^{-\xx}$ the next statement holds  for all~$\theta, \theta_0 \in \localr$ 
\[
\Vert D^{-1} (\nabla \zeta (\theta) -  \nabla \zeta (\theta_0))  \Vert
\leq 6 \nunu \omega \zz(\xx,p) \rr.
\]
\end{lemma}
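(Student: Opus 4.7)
The plan is to reduce the supremum-type statement to the operator-norm of the Hessian of the centered log-likelihood, then apply an exponential Bernstein bound from (\ref{EDi}) together with a covering argument on the unit sphere.

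\textbf{Step 1 (Reduction to a Hessian bound).} By the fundamental theorem of calculus applied to the map $t \mapsto \nabla \zeta(\theta_0 + t(\theta - \theta_0))$, one can write
\[
D^{-1}\bigl(\nabla \zeta(\theta) - \nabla \zeta(\theta_0)\bigr)
= \int_0^1 D^{-1} \nabla^2 \zeta\bigl(\theta_0 + t(\theta - \theta_0)\bigr) D^{-1} \cdot D(\theta - \theta_0) \, dt.
\]
Hence, uniformly over $\theta, \theta_0 \in \localr$,
\[
\bigl\| D^{-1}\bigl(\nabla \zeta(\theta) - \nabla \zeta(\theta_0)\bigr)\bigr\|
\leq \sup_{\vartheta \in \localr} \bigl\| D^{-1} \nabla^2 \zeta(\vartheta) D^{-1} \bigr\|_{\oper} \cdot \normp{D(\theta - \theta_0)},
\]
and since $\normp{D(\theta - \theta_0)} \leq \rr$ on $\localr$, the lemma reduces to showing that with probability $1-e^{-\xx}$,
\[
\sup_{\vartheta \in \localr} \bigl\| D^{-1} \nabla^2 \zeta(\vartheta) D^{-1} \bigr\|_{\oper} \leq 6\nunu\omega\, \zz(\xx,p).
\]

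\textbf{Step 2 (Bernstein bound at a fixed direction).} For fixed $\vartheta\in\localr$ and fixed unit vectors $\gamma_1,\gamma_2 \in S^{p-1}$, set $W = \sum_i \gamma_1^\T D^{-1}\nabla^2 \zeta_i(\vartheta) D^{-1}\gamma_2$. Summing (\ref{EDi}) over $i$ gives, for $|\lambda|<g$,
\[
\log \E \exp\Bigl\{\frac{\lambda}{\omega} W\Bigr\} \leq \frac{\lambda^2 \nunu^2}{2}.
\]
A standard Chernoff optimization then yields a tail of Bernstein type: for every $u>0$,
\[
\P\!\left(|W| > \omega\bigl(\nunu \sqrt{2u} + u/g\bigr)\right) \leq 2 e^{-u}.
\]

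\textbf{Step 3 (Covering of the sphere and of $\localr$).} The operator norm satisfies $\|M\|_\oper \le 4 \max_{\gamma_1,\gamma_2\in \cc{N}} |\gamma_1^\T M \gamma_2|$ for a $1/4$-net $\cc{N}\subset S^{p-1}$ of cardinality at most $e^{3p}$ (volumetric bound), so the pair of nets has cardinality at most $e^{6p}$; this generates the $\sqrt{6p}$ contribution in $\zz(\xx,p)$. To handle the continuous parameter $\vartheta\in\localr$, use a fine polynomial-sized grid and control the oscillation of $D^{-1}\nabla^2 \zeta_i(\vartheta) D^{-1}$ between grid points via smoothness of the Hessian (which is implicit in $\vartheta\mapsto L(\vartheta)$ being admissible in (\ref{EDi})); this costs only logarithmic factors absorbed in the constants.

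\textbf{Step 4 (Union bound and assembly).} Taking $u = \xx + 6p$ to cover all net points simultaneously and invoking Step~2 gives with probability at least $1-e^{-\xx}$
\[
\sup_{\vartheta \in \localr} \sup_{\gamma_1,\gamma_2 \in \cc{N}} |\gamma_1^\T D^{-1}\nabla^2 \zeta(\vartheta) D^{-1}\gamma_2|
\leq \omega\left(\nunu\sqrt{2(\xx+6p)} + \frac{\xx+6p}{g}\right).
\]
Using $\sqrt{a+b}\le \sqrt{a}+\sqrt{b}$, combining with the $1/4$-net loss, and absorbing constants, this is bounded by $6\nunu\omega\,\zz(\xx,p)$ with $\zz(\xx,p) = \sqrt{6p}+\sqrt{2\xx}+12p/g$. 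Multiplying by $\rr$ from Step~1 yields the claim.

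\textbf{Main obstacle.} The only delicate point is Step~3: the bound must hold \emph{uniformly} over the continuum $\localr$, not just at a finite net. The exponential moment condition (\ref{EDi}) controls the Hessian at a single point, so one must either invoke a smoothness/Lipschitz property of $\vartheta \mapsto D^{-1}\nabla^2 \zeta_i(\vartheta) D^{-1}$ (chaining in $\vartheta$) or extract that smoothness from (\ref{EDi}) by applying the same exponential bound to difference-quotients. The remaining steps are routine Bernstein + covering arguments.
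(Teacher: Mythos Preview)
The paper does not actually prove this lemma; it simply states ``Paper \cite{wilks2013} contains proof for this statement.'' So there is no in-paper argument to compare against, only the cited reference (Spokoiny's parametric Wilks theory).

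Your sketch is a correct and standard route: integral representation via the Hessian, the sub-Gaussian/Bernstein tail from \eqref{EDi}, and an $\varepsilon$-net on $S^{p-1}\times S^{p-1}$ to pass to the operator norm. Two small points. First, in Step~1 you write $\|D(\theta-\theta_0)\|\le \rr$, but with both endpoints in $\localr$ you only get $\le 2\rr$; this factor is harmless and absorbed into the leading constant. Second, the constants you assemble ($\sqrt{12p}$, $(6p+\xx)/g$) do not literally reproduce $\zz(\xx,p)=\sqrt{6p}+\sqrt{2\xx}+12p/g$, but after the $1/4$-net loss and $\sqrt{a+b}\le\sqrt a+\sqrt b$ they fit under $6\nunu\omega\,\zz(\xx,p)$, which is all the statement claims.

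Your ``Main obstacle'' is the right place to be careful, and it is the one spot where your sketch is genuinely incomplete. Condition \eqref{EDi} gives an exponential moment at each fixed $\theta$, and a net in $(\gamma_1,\gamma_2)$ alone does not make the bound uniform over the continuum $\vartheta\in\localr$. In Spokoiny's argument this is handled not by ad hoc smoothness of $\vartheta\mapsto\nabla^2\zeta_i(\vartheta)$ but by treating $\UP(\theta)=D^{-1}\nabla\zeta(\theta)$ as an empirical process indexed by $\localr$ and bounding its increments via a metric-entropy (chaining) inequality, with \eqref{EDi} supplying the sub-Gaussian increment condition; the entropy of the $D$-ball $\localr$ contributes precisely the $p$-dependent terms in $\zz(\xx,p)$. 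Your alternative---grid $\localr$ and control oscillations between grid points---can be made to work, but you must exhibit a quantitative Lipschitz estimate for $\vartheta\mapsto D^{-1}\nabla^2\zeta(\vartheta)D^{-1}$ (e.g.\ via a third-derivative bound or by applying \eqref{EDi} to difference quotients), otherwise the ``absorbed in the constants'' step is a gap rather than a simplification.
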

Paper \cite{wilks2013} contains proof for this statement. 
If the considered point is MLE ($\theta = \opttheta$) then its concentration in the region $\localr$ follows from condition  (\ref{cond_L}) and Theorem 2.1 from \cite{wilks2013}.
\[\tag{L}\label{cond_L}
\E L(\theta^*) - \E L(\theta_r) =  \Omega (\rr^2),
\quad
\normp{D(\theta^* -\theta_r)} = \rr.
\]

 \subsection{LRT theorem}
  
\label{lrt_main}

Further consider a fixed window position $t$ and window size $2h$. We are going to derive explicit dependence between statistic $\dLh(t)$ and parameter difference from left and  right part of the window $(\thetas_r - \thetas_l)$.  Approximation of $\dLh(t)$ by its quadratic form  splits  noise and deterministic parts, such that  $2 \dLh(t)\approx\normp{ D(\thetas_r - \thetas_l) + \xiv_{lr}}^2$. In the fixed window position the likelihood function  has view 
\[
L(\theta) = L_l(\theta) + L_r(\theta) = L(\theta, \Ybb_l) + L(\theta, \Ybb_r),
\]   
\[
D^2  = - \nabla^2 \E L(\theta^*), 
\quad
D_k^2  = - \nabla^2 \E L_k(\theta_k^*), 
\quad
\xiv_k = D_k^{-1} L_k(\theta_k^*),
\quad
i = \{l,r\}.
\]
Assume that exist a local region where parameter $\theta$ concentrates
\[
\localr = \{ \theta: \| D(\theta - \thetas) \| < \rr \}.
\]
From condition (\ref{cond_A}) for  function $L(\theta)$  it holds with probability $1 - e^{-\xx}$ in local region ($\opttheta, \opttheta_l, \opttheta_r \in \localr$) (ref. Theorem 3.2 in \cite{wilks2013} with $\dLh(t) = L(\theta, \theta_0)$) that
\[
\left |
\sqrt{2 \dLh(t)} - 
\normp{
\begin{matrix}
D_l(\opttheta - \opttheta_l) \\
D_r(\opttheta - \opttheta_r) \\
\end{matrix}}
\right |
\leq 2 \diamondsuit (\sqrt{2}\rr, \xx).
\]
Find relation between $\opttheta, \opttheta_l, \opttheta_r$ using Theorem 3.1 from  \cite{wilks2013} with notation $\xiv_i(\theta) = D_k^{-1}  \nabla L_k(\theta)$
\[
\normp{D (\opttheta - \theta)} \leq 
\normp{
D^{-1}\{ D_l \xiv_l(\theta) + D_r \xiv_r(\theta) \}
} + 2 \diamondsuit (\rr, \xx).
\]
\[
\normp{
\begin{matrix}
D^{-1} D_l \{ \xiv_l(\theta) -  D_l(\theta - \opttheta_l) \}   \\
D^{-1} D_r \{ \xiv_r(\theta) -  D_r(\theta - \opttheta_r) \}   \\
\end{matrix}
}
\leq  2 \diamondsuit (\sqrt{2} \rr, \xx).
\]
Define vector $\widetilde{\theta}$ that is close to $\opttheta$
\[
\widetilde{\theta} = \argmin_{\theta} \left\{ \normp{D_l(\theta - \opttheta_l)}^2 + \normp{D_r(\theta - \opttheta_r)}^2  \right\},
\]
\[
\widetilde{\theta} = (D_l^2 + D_r^2)^{-1} (D_l^2 \opttheta_l + D_r^2 \opttheta_r).
\]
\[
\left |
\normp{
\begin{matrix}
D_l(\opttheta - \opttheta_l) \\
D_r(\opttheta - \opttheta_r) \\
\end{matrix}}  - 
\normp{
\begin{matrix}
D_l(\widetilde{\theta} - \opttheta_l) \\
D_r(\widetilde{\theta}  - \opttheta_r) \\
\end{matrix}}
\right | 
\leq 
\normp{D (\opttheta - \widetilde{\theta})} 
\leq 
 2 \diamondsuit (\rr, \xx) +  2 \diamondsuit (\sqrt{2}\rr, \xx).
\]
\[
\normp{
\begin{matrix}
D_l(\widetilde{\theta} - \opttheta_l) \\
D_r(\widetilde{\theta}  - \opttheta_r) \\
\end{matrix}}  = 
\normp{D_{lr} (\opttheta_r - \opttheta_l)},
\quad D_{lr} =  D_l D^{-1} D_r.
\]
The temporary result is (with probability $1 - 3 e^{-\xx}$) 
\[
\left |
\sqrt{2 \dLh(t)} - 
\normp{D_{lr} (\opttheta_r - \opttheta_l)}
\right |
\leq 4 \diamondsuit (\sqrt{2}\rr, \xx)  +  2 \diamondsuit (\rr, \xx).
\]
Involve $\xiv_l$ and $\xiv_r$  by means of Fisher expansion (equation 3.7 in   \cite{wilks2013}) for the model with two independent components 
\[
\normp{
\begin{matrix}
D_r D^{-1} \{ D_l(\opttheta_l - \theta^*_l ) - \xiv_l \} \\
D_l D^{-1} \{ D_r(\opttheta_r - \theta^*_r ) - \xiv_r \} \\
\end{matrix}} 
\leq
\diamondsuit (\sqrt{2}\rr, \xx).
\]
The final result is Theorem \ref{lrt_th},
which enables to describe  $\dLh(t) $ function depending on change point type and subsequently choose appropriate pattern $P_h(t)$ (ref. Section \ref{sec:procedure}).

\begin{theorem}
\label{lrt_th}
Assume that MLE parameters belong to the local region $\opttheta, \opttheta_l, \opttheta_r \in \localr$ and the likelihood has a fit quadratic expansion (\ref{cond_A}), then with probability $1 - 4e^{-\xx}$ for each $t$ 
\[
\left |
\sqrt{2 \dLh(t)} - 
\normp{D_{lr} (\theta^*_r - \theta^*_l)(t) + \xiv_{lr}(t)}
\right |
\leq 7 \diamondsuit (\sqrt{2}\rr, \xx),
\]
where
\[
\xiv_{lr}(t) =  D_{lr}  \{ D_l^{-2} \nabla L(\thetas_l, \Ybb_l) +  D_r^{-2} \nabla L(\thetas_r, \Ybb_r) \}, 
\quad  
D_{lr} =  D_l D^{-1} D_r.
\]
\end{theorem}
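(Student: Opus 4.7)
The plan is to chain together three perturbation steps, each provided by the quadratic-approximation machinery in Section~\ref{likelihood} (and Theorems~3.1, 3.2, and the Fisher expansion of \cite{wilks2013} applied under condition~(\ref{cond_A})). In order, I will (i) replace $\sqrt{2T_h(t)}$ by a two-block Euclidean norm evaluated at the three MLEs, (ii) reduce that two-block expression to the single term $\normp{D_{lr}(\opttheta_r-\opttheta_l)}$ via an auxiliary minimiser $\widetilde\theta$, and (iii) swap MLEs for the population parameters $\thetas_l,\thetas_r$ by means of Fisher expansion, introducing $\xiv_{lr}(t)$. Each step incurs one or two $\diamondsuit$-sized errors; a union bound controls probabilities and the constants add up to $7\diamondsuit(\sqrt 2\rr,\xx)$.

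\textbf{Step 1 (LRT to block norm).} Because $\dLh(t)=L(\opttheta_l,\Ybb_l)+L(\opttheta_r,\Ybb_r)-L(\opttheta,\Ybb)$ decomposes over the independent halves, applying the quadratic-expansion consequence of~(\ref{cond_A}) (Theorem~3.2 of \cite{wilks2013}) separately to $L_l$ and $L_r$ expanded around $\opttheta$ gives, on the event of probability $1-e^{-\xx}$ where $\opttheta,\opttheta_l,\opttheta_r\in\localr$,
\[
\bigl|\sqrt{2\dLh(t)}-\normp{(D_l(\opttheta-\opttheta_l),\,D_r(\opttheta-\opttheta_r))}\bigr|\le 2\diamondsuit(\sqrt2\rr,\xx).
\]
\textbf{Step 2 (Block norm to $D_{lr}$).} I introduce the quadratic minimiser $\widetilde\theta=(D_l^2+D_r^2)^{-1}(D_l^2\opttheta_l+D_r^2\opttheta_r)$ and note first the algebraic identity $\normp{(D_l(\widetilde\theta-\opttheta_l),D_r(\widetilde\theta-\opttheta_r))}=\normp{D_{lr}(\opttheta_r-\opttheta_l)}$, which follows by block-expanding the squared norm and using the definition of $D_{lr}=D_l D^{-1}D_r$. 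Next I bound $\normp{D(\opttheta-\widetilde\theta)}$: applying Theorem~3.1 of \cite{wilks2013} once to the full likelihood at $\opttheta$ and once to the split block system at $\widetilde\theta$ (both are quadratic minimisers of the respective score equations) yields $\normp{D(\opttheta-\widetilde\theta)}\le 2\diamondsuit(\rr,\xx)+2\diamondsuit(\sqrt2\rr,\xx)$. Using the reverse triangle inequality on the block norm, the error in replacing $\opttheta$ by $\widetilde\theta$ is bounded by $\normp{D(\opttheta-\widetilde\theta)}$.

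\textbf{Step 3 (MLE to $\thetas$ via Fisher expansion).} Inside the block norm $\normp{D_{lr}(\opttheta_r-\opttheta_l)}$ I write $\opttheta_r-\opttheta_l=(\opttheta_r-\thetas_r)-(\opttheta_l-\thetas_l)+(\thetas_r-\thetas_l)$ and apply the Fisher-expansion bound (equation~3.7 in \cite{wilks2013}) on each half, which under~(\ref{cond_A}) asserts $\normp{D_k(\opttheta_k-\thetas_k)-\xiv_k}\le\diamondsuit(\sqrt2\rr,\xx)$. Multiplying through by the appropriate $D^{-1}D_{l/r}$ factors produces exactly the noise term $\xiv_{lr}(t)=D_{lr}\{D_l^{-2}\nabla L(\thetas_l,\Ybb_l)+D_r^{-2}\nabla L(\thetas_r,\Ybb_r)\}$, since $\xiv_k=D_k^{-1}\nabla L_k(\thetas_k)$. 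Reverse-triangle inequality then charges another $\diamondsuit(\sqrt2\rr,\xx)$.

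\textbf{Step 4 (Combine and bookkeep).} Summing the three triangle-inequality steps gives the stated bound $2+4+1=7$ times $\diamondsuit(\sqrt2\rr,\xx)$ (absorbing the weaker $\diamondsuit(\rr,\xx)$ term into $\diamondsuit(\sqrt2\rr,\xx)$ by monotonicity of $\diamondsuit$ in $\rr$). Each invocation of~(\ref{cond_A}) holds with probability $\ge 1-e^{-\xx}$ on the event that the corresponding estimator lies in $\localr$; a union bound over the four applications (the two halves in Step~1, the two halves in Step~3) yields the overall probability $1-4e^{-\xx}$. The main obstacle I expect is the second step: the algebraic identification of the block norm at $\widetilde\theta$ with $\normp{D_{lr}(\opttheta_r-\opttheta_l)}$ is clean once written out, but the control of $\normp{D(\opttheta-\widetilde\theta)}$ requires using~(\ref{cond_A}) twice for systems of different dimension (full vs.\ split) and carefully matching scales of $\rr$ so that the two $\diamondsuit$ terms can be lumped into a single $\diamondsuit(\sqrt2\rr,\xx)$.
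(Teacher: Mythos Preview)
Your proof plan is correct and follows essentially the same three-stage route as the paper: (i) Wilks-type bound to pass from $\sqrt{2T_h}$ to the two-block norm at the MLEs, (ii) introduction of the quadratic minimiser $\widetilde\theta$ together with the algebraic identity $\normp{(D_l(\widetilde\theta-\opttheta_l),D_r(\widetilde\theta-\opttheta_r))}=\normp{D_{lr}(\opttheta_r-\opttheta_l)}$ and the bound on $\normp{D(\opttheta-\widetilde\theta)}$, and (iii) the two-block Fisher expansion to swap $\opttheta_l,\opttheta_r$ for $\thetas_l,\thetas_r$ and produce $\xiv_{lr}$. The error budget $2+(2+2)+1=7$ copies of $\diamondsuit(\sqrt2\rr,\xx)$ (after absorbing $\diamondsuit(\rr,\xx)$ by monotonicity) matches the paper exactly. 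The only discrepancy is bookkeeping: the paper allocates the four $e^{-\xx}$ events as one for Step~1, two for Step~2 (one application of Theorem~3.1 for the full model and one two-block bound), and one for the two-block Fisher expansion in Step~3, whereas you attribute them to ``two halves'' in Steps~1 and~3 and omit Step~2 from the count; the total $1-4e^{-\xx}$ is the same either way.
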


  \subsection{Bootstrap Wilks and Fisher expansions}

\label{boot_WF}

As it was mentioned in Section \ref{sec:procedure} the bootstrap procedure allows to yield likelihood function with two options: each likelihood component is multiplied by weight (weighted bootstrap) or new data is resampled (empirical bootstrap).    
The likelihood function in weighted bootstrap case is a zipped sum with i.i.d weights $(\ub_1,\ldots,\ub_n)$ and independent $\{l_i (\theta)\}^{n}_{i=1}$:
\[
\Lbf = \sum_{i=1}^n \ub_i l_i(\theta),
\]  
\[
\zeta^{\flat}(\theta) = \Lbf - \Lf.
\]
Each weight element has $\Varb \ub_i = 1$ and $\Eb \ub_i = 1$, which is made with a view
\begin{EQA}
\Eb \Lbf & = & \Lf, \\
\Varb \nabla \Lbf  &=&  \sum_{i=1}^n \nabla l_i(\theta) \nabla l_i(\theta)^{T}. 
\end{EQA}
It is expected that $\Varb \nabla \Lbf$ is close to $\Var \nabla \Lf$, which depends on $\E \nabla l_i(\theta)$ values. For example in i.i.d models $\E \nabla l_i(\thetas) = 0$ and $\E \nabla l_i(\theta) $ is close to zero in $\localr$.

Here variable $\xiv$  has a bootstrap duplicate 
\[\tag{xib}\label{xi_boot}
\xib = D^{-1} \nabla \zeta^{\flat}(\opttheta) = \sum_i \xiv_i \epsb_i, 
\quad
\epsb_i = \ub_i - 1.
\] 
Let function $\alphab12$ denotes quadratic approximation error for the weighted likelihood function. Assume further then $\theta, \theta_0 \in \localr$.
\[
\alphab12 = \Lbf - \Lb0 - (\theta - \theta_0)^{T} \nabla \Lb0 + \frac{1}{2} \Vert  D(\theta - \theta_0) \Vert^2.
\]
The mean and deviation of the approximation error are (ref. Theorem 3.2 in \cite{wilks2013})
$$
\Vert D^{-1} \nabla \Eb \alphab12 \Vert =   \Vert D^{-1} \nabla  \alpha(\theta, \theta_0) \Vert 
\leq  \diamondsuit(\rr,\xx),
$$
\begin{equation*}
\label{S}\tag{Sb}
S^{\flat}(\theta, \theta_0) = D^{-1} \{\nabla \alphab12 -  \Eb \nabla \alphab12 \}
= \sum_{i=1}^{n} D^{-1} \{\nabla l_i(\theta) - \nabla l_i(\theta_0) \}  \epsb_i.
\end{equation*}
Define function 
\begin{EQA}
{\overset{o}{S}}_{\flat} &=& S^{\flat}(\theta, \theta_0) - \E_{Y} S^{\flat}(\theta, \theta_0) \\
& = & 
\sum_{i=1}^{n} D^{-1} \{\nabla \zeta_i(\theta) - \nabla \zeta_i(\theta_0) \}  \epsb_i.
\end{EQA}
The function $ {\overset{o}{S}}_{\flat}$ (taking into account Lemma \ref{ddZeta}) with probability $1- e^{-\xx}$ fulfils 
\[
\Vert {\overset{o}{S}}_{\flat} \Vert \leq 
\romb \sqrt{\sum_{i=1}^{n} (\nu_i^2 / \nu_0^2) (\epsb_i)^2 }, 
\quad \sum_i \nu_i^2 / \nu_0^2 = 1.
\] 
Let vector $\epsb = (\epsb_1,\ldots, \epsb_n)^T$ has a restricted exponential moment:
\begin{equation*}
\label{Eu1}\tag{Eu}
\log \Eb e^{\gamma^T \epsb} \leq \frac{\normp{\gamma}^2}{2} ,
\end{equation*} 
then by means of the quadratic form deviation (Lemma \ref{LLbrevelocroB}) with variable $\normp{B^{1/2}\epsb}$ where $B = \diag(\nu_1^2 / \nu_0^2,\ldots,\nu_n^2 / \nu_0^2)$  one get with probability $1 - 2 e^{-\xx}$
\[
\sqrt{\sum_{i=1}^{n} (\nu_i^2 / \nu_0^2) (\epsb_i)^2 } \leq  \sqrt{\tr(B)} + \sqrt{2 \xx} \max_i \frac{\nu_i}{\nunu}  = 1 + O \left( \sqrt{\frac{2\xx}{n}}  \right).
\]
Consequently with high probability 
\[
\Vert {\overset{o}{S}}_{\flat} \Vert \leq  2 \romb. 
\]
In order to find a bound for expectation  $\E_Y S(\theta, \theta_0)$ one may employ the  deviation bound for matrix  sub-Gaussian sums (ref. Lemma \ref{sum_eA}).  Restrict the norms of $\E_Y S(\theta, \theta_0)$ components and expand it by Tailor.
\begin{EQA}
\E_Y  S^{\flat}(\theta, \theta_0) 
&=& \sum_{i=1}^{n} D^{-1} \E_Y \{\nabla l_i(\theta) - \nabla l_i(\theta_0) \}  \epsb_i  \\
&=& \left \{ \sum_{i=1}^{n} D^{-1} \E_Y \{\nabla^2 l_i(\theta_1)  \}  D^{-1}  \epsb_i \right \} D (\theta -  \theta_0)  ,
\end{EQA}
\begin{equation*}\label{Li}
\tag{dDi}
\normop{ D^{-1} \nabla^2 \E l_i (\theta_1)  D^{-1} } \leq C_i(\rr),
\end{equation*}
which leads with probability $1 - 2e^{-\xx}$ to
\[
\Vert \E_Y S^{\flat}(\theta, \theta_0) \Vert \leq \sqrt{2 (\xx + \log p) \sum_i C_i^2(\rr)} \,  \rr.
\]

\begin{theorem}[Weighted bootstrap Wilks]
\label{wilks_boot} 
Under conditions (\ref{EDi}), (\ref{cond_A}), (\ref{Li})  with sub-Gaussian bootstrap weights  (condition \ref{Eu1}) in local region $\localr$ it holds with probability $(1 - 2 e^{-\xx})(1 - 4e^{-\xx^{\flat}})$ ($\xx$~relates to $\Ybb$ generation and $\xx^{\flat}$ relates to bootstrap weights generation)
\[\tag{Ab}
| \alphab12 |
\leq 
 \rombb \Vert D (\theta - \theta_0)  \Vert ,
\]
where
\[
 \rombb = 
\left (2 +   O \left( \sqrt{\frac{2\xx^{\flat}}{n}}  \right)  \right ) \romb +  
\sqrt{2(\xx^{\flat} + \log p) \sum_i C^2_i(\rr) } \, \rr.
\] 
\end{theorem}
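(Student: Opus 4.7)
The plan is to decompose $\alphab12$ using the identity $\Eb\Lbf = \Lf$, which gives $\Eb\alphab12 = \alpha(\theta,\theta_0)$, and then to treat the deterministic approximation error and the centered bootstrap fluctuation separately:
\[
\alphab12 \;=\; \alpha(\theta,\theta_0) \;+\; \bigl(\alphab12 - \alpha(\theta,\theta_0)\bigr).
\]
Both summands vanish at $\theta=\theta_0$ together with their $\theta$-gradients, so by the fundamental theorem of calculus each equals $\int_0^1 (\theta-\theta_0)^{\T}\nabla_\theta(\cdot)\big|_{\theta_t}\,dt$ along the segment $\theta_t = \theta_0 + t(\theta-\theta_0)$. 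A single application of Cauchy--Schwarz then turns a uniform bound on $\|D^{-1}\nabla_\theta(\cdot)\|$ over $\localr$ into a bound of the form $\text{const}\cdot\|D(\theta-\theta_0)\|$, which is exactly the shape of $\rombb$.

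For the deterministic summand, $D^{-1}\nabla_\theta\alpha(\theta_t,\theta_0) = \chi(\theta_t,\theta_0)$, and the key algebraic identity $\chi(\theta_t,\theta_0) = \chi(\theta_t,\theta^*)-\chi(\theta_0,\theta^*)$ (immediate from the definition of $\chi$) lets me move the reference point from $\theta^*$ to $\theta_0$. Condition (\ref{cond_A}) applied to each term then yields a uniform bound of size $O(\romb)$, contributing the $\romb$-proportional part of the leading coefficient in $\rombb$.

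For the centered summand the $\theta$-gradient equals exactly $D\cdot S^\flat(\theta,\theta_0)$, and the excerpt has already done the work of splitting $S^\flat = \overset{o}{S}_{\flat} + \E_Y S^\flat$ and bounding each piece. Lemma \ref{ddZeta} applied conditionally on the weights, combined with the sub-Gaussian quadratic-form bound (Lemma \ref{LLbrevelocroB}) for $\|B^{1/2}\epsb\|$ under (\ref{Eu1}), produces $\|\overset{o}{S}_{\flat}\| \le \bigl(1+O(\sqrt{2\xx^\flat/n})\bigr)\romb$. The mean-value expansion shown in the excerpt rewrites $\E_Y S^\flat$ as a random matrix sum acting on $D(\theta-\theta_0)$; applying the matrix sub-Gaussian sum inequality (Lemma \ref{sum_eA}) with the operator-norm bounds $C_i(\rr)$ from (\ref{Li}) delivers $\|\E_Y S^\flat\| \le \sqrt{2(\xx^\flat+\log p)\sum_i C_i^2(\rr)}\,\rr$. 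Integrating these bounds along $\theta_t$ and adding them to the deterministic contribution produces exactly the coefficient structure of $\rombb$.

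The main obstacle is not any individual step --- each is already half-executed in the text --- but rather the probability bookkeeping combined with uniformity over $\localr$. Condition (\ref{cond_A}) costs $e^{-\xx}$ in the data, Lemma \ref{ddZeta} costs another $e^{-\xx}$, the quadratic-form concentration of $\|B^{1/2}\epsb\|$ costs $2e^{-\xx^\flat}$ in the bootstrap weights, and the matrix sum concentration costs $2e^{-\xx^\flat}$ more. These split into a data factor $1-2e^{-\xx}$ and an independent weight factor $1-4e^{-\xx^\flat}$, whose product under $\Pb$ yields the stated $(1-2e^{-\xx})(1-4e^{-\xx^\flat})$. Uniformity in $\theta \in \localr$ is handed to us by Lemma \ref{ddZeta} (which is already uniform) and by convexity of $\localr$ (the interior Taylor point $\theta_1$ and every $\theta_t$ remain in $\localr$), so no additional supremum penalty is incurred.
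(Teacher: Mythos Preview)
Your proposal is correct and follows essentially the same approach as the paper. The paper's argument is the text immediately preceding the theorem in Section~\ref{boot_WF}: it bounds $\|D^{-1}\nabla\alphab12\|$ by writing $D^{-1}\nabla\alphab12 = D^{-1}\nabla\alpha(\theta,\theta_0) + S^{\flat}(\theta,\theta_0)$, then splits $S^{\flat} = \overset{o}{S}_{\flat} + \E_Y S^{\flat}$ and handles the three pieces with condition~(\ref{cond_A}), Lemma~\ref{ddZeta} plus Lemma~\ref{LLbrevelocroB}, and Lemma~\ref{sum_eA} respectively --- exactly as you outline. Your contribution is to make explicit the fundamental-theorem-of-calculus step that converts the uniform gradient bound into the function bound $|\alphab12|\le \rombb\|D(\theta-\theta_0)\|$; the paper leaves this implicit (it is the standard passage between the $\chi$-bound and the $\alpha$-bound in condition~(\ref{cond_A})), but your treatment is correct and the probability accounting matches.
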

A modification of Fisher expansion  (Theorem 2.2 in \cite{wilks2013}) for the weighted likelihood could be proved using the following property
\[
\chib12 =  D^{-1} ( \nabla L^{\flat}(\theta) -   \nabla L^{\flat}(\theta_0) ) + D (\theta -  \theta_0),
\] 
\[
\chib12 = D^{-1} \nabla \alphab12. 
\]
\begin{theorem}[Weighted bootstrap Fisher]
\label{fisher_boot}
Under conditions from Theorem \ref{wilks_boot} it holds with probability $(1 - 2 e^{-\xx})(1 - 4e^{-\xx^{\flat}})$ that
\[
\Vert \chb(\theta^{\flat}, \widehat{\theta}) \Vert
= 
\Vert  D (\theta^{\flat} -  \widehat{\theta}) -  D^{-1} \nabla L^{\flat}(\widehat{\theta}) \Vert
\leq 
  \rombb, 
\] 
where $\theta^{\flat}$, $\widehat{\theta}$ are MLE parameters of the weighted and non-weighted likelihood functions.
\end{theorem}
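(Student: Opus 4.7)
The plan is to reuse the decomposition behind the proof of Theorem \ref{wilks_boot}, observing that $\chib12 = D^{-1}\nabla_{\theta}\alphab12$, and then specialise at the first-order-optimal point $\thetab$.

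\textbf{Step 1 (additive split).} Writing $\alphab12 = \alpha(\theta,\theta_0) + (\alphab12 - \alpha(\theta,\theta_0))$ and differentiating in $\theta$ yields
\[
\chib12 \;=\; \chi(\theta,\theta_0) + S^{\flat}(\theta,\theta_0),
\]
where $S^{\flat}$ is exactly the random part defined in (\ref{S}). Condition (\ref{cond_A}) bounds the deterministic piece by $\|\chi\| \le \romb$ immediately.

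\textbf{Step 2 (bound $S^{\flat}$).} Split further $S^{\flat} = \mathring{S}^{\flat} + \E_Y S^{\flat}$. The two estimates already assembled in the proof of Theorem \ref{wilks_boot} apply verbatim here, since they concern $S^{\flat}$ directly and not $\alphab$. Lemma \ref{ddZeta} combined with the quadratic-form deviation of Lemma \ref{LLbrevelocroB} (used with $B = \diag(\nu_i^2/\nunu^2)$ and the bootstrap weights $\epsb$ via assumption (\ref{Eu1})) gives $\|\mathring{S}^{\flat}\| \le (1 + O(\sqrt{2\xx^{\flat}/n}))\romb$ with probability $1 - 2e^{-\xx^{\flat}}$, while the matrix sub-Gaussian sum bound of Lemma \ref{sum_eA} together with (\ref{Li}) yields $\|\E_Y S^{\flat}\| \le \sqrt{2(\xx^{\flat}+\log p)\sum_i C_i^2(\rr)}\,\rr$ with probability $1 - 2e^{-\xx^{\flat}}$. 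Summing the three contributions (with $\|\chi\|$) lands exactly at $\|\chib12\| \le \rombb$ uniformly over $(\theta,\theta_0) \in \localr\tm\localr$, on an event of the stated probability.

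\textbf{Step 3 (specialisation).} Exploit that the bootstrap MLE satisfies $\nabla L^{\flat}(\thetab) = 0$. Inserting $\theta = \thetab$ and $\theta_0 = \opttheta$ into the displayed identity $\chib12 = D(\theta - \theta_0) + D^{-1}\{\nabla L^{\flat}(\theta) - \nabla L^{\flat}(\theta_0)\}$ collapses the gradient difference to $-D^{-1}\nabla L^{\flat}(\opttheta)$, which reproduces verbatim the vector whose norm is being bounded.

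\textbf{Main obstacle.} The uniform $\chib$-estimate produced in Step 2 is only useful if \emph{both} $\opttheta$ and $\thetab$ fall inside the local region $\localr$. Concentration of $\opttheta$ is supplied by condition (\ref{cond_L}) together with Theorem 2.1 of \cite{wilks2013}; the analogous statement for $\thetab$ is not automatic and has to be bootstrapped from Theorem \ref{wilks_boot} itself, by combining the Wilks-type bound on $\alphab$ at the pair $(\thetab,\opttheta)$ with the quadratic lower bound on $\Eb L^{\flat}(\thetab) - \Eb L^{\flat}(\opttheta)$. Once both estimators are pinned to $\localr$, the remainder is constant-bookkeeping inherited from Theorem \ref{wilks_boot}, and the claimed probability $(1 - 2e^{-\xx})(1 - 4e^{-\xx^{\flat}})$ is obtained by intersecting the data-randomness event (size $1-2e^{-\xx}$ from (\ref{EDi}) and (\ref{cond_A})) with the bootstrap-weight event (size $1-4e^{-\xx^{\flat}}$ from the two sub-Gaussian bounds above).
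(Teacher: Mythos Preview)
Your proposal is correct and is essentially the paper's own argument spelled out in more detail. The paper's entire proof of Theorem~\ref{fisher_boot} is the two displayed lines immediately preceding the statement, namely the observation that $\chib12 = D^{-1}\nabla\alphab12$; since the proof of Theorem~\ref{wilks_boot} in the paper already proceeds by bounding $D^{-1}\nabla\alpha^{\flat}$ via the same three-piece split $\chi + \mathring{S}^{\flat} + \E_Y S^{\flat}$ that you write down in Steps~1--2, the Fisher bound is an immediate byproduct, and your Step~3 specialisation at $(\thetab,\opttheta)$ with $\nabla L^{\flat}(\thetab)=0$ is exactly what the paper intends.
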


Theorem \ref{wilks_boot}  enables to prove a theorem similar to Theorem \ref{lrt_th}  for the  bootstrap LRT statistic~$\dLhb$.  The proof steps are the same as in Theorem \ref{lrt_th}.

\begin{theorem}[Weighted bootstrap LRT]
\label{lrt_boot}
Assume that MLE parameters belong to the local region $\theta^{\flat}, \theta_l^{\flat}, \theta_r^{\flat} \in \localr$, the likelihood has a good quadratic expansion (conditions from Theorem \ref{wilks_boot}), then with probability $1 - 8 e^{-\xx} - 16 e^{-\xx^{\flat}} $ 
for each window position $t$
\[
\left |
\sqrt{2 \dLhb(t)} - 
\normp{D_{lr} (\opttheta_r - \opttheta_l)(t) + \xiv_{lr}^{\flat}(t)}
\right |
\leq 7 \diamondsuit^{\flat} (\sqrt{2}\rr, \xx),
\]
where
\[
\xiv_{lr}^{\flat}(t) = D_{lr} \{ D_l^{-2} \nabla L^{\flat}(\opttheta_l, \Ybb_l) +  D_r^{-2} \nabla L^{\flat}(\opttheta_r, \Ybb_r) \}.
\]
\end{theorem}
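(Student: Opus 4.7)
The strategy is to mirror the proof of Theorem~\ref{lrt_th}, replacing the Wilks and Fisher expansions by their bootstrap counterparts (Theorems~\ref{wilks_boot} and~\ref{fisher_boot}) and re-centering everything around the data MLEs $\widehat{\theta}_l,\widehat{\theta}_r$ instead of $\theta^*_l,\theta^*_r$. The extra ingredient is the deterministic shift $\Delta_0 := \widehat{\theta}_r - \widehat{\theta}_l$ that enters the definition of $T_h^\flat$ in~\eqref{Tb}. I would set
\[
\theta_c^\flat := \arg\max_{\theta}\bigl\{L^\flat(\theta,\Ybb_l) + L^\flat(\theta + \Delta_0, \Ybb_r)\bigr\},
\]
so that $2T_h^\flat(t) = 2[L^\flat(\theta_l^\flat,\Ybb_l) - L^\flat(\theta_c^\flat,\Ybb_l)] + 2[L^\flat(\theta_r^\flat,\Ybb_r) - L^\flat(\theta_c^\flat + \Delta_0,\Ybb_r)]$, which is the exact analogue of the decomposition used in Theorem~\ref{lrt_th}.

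First I would apply bootstrap Wilks (Theorem~\ref{wilks_boot}) to each of the three weighted likelihoods, expanded around their respective MLEs. Since the bootstrap gradient vanishes at each MLE, the linear term drops and only the quadratic remainder survives, yielding
\[
\left| \sqrt{2T_h^\flat(t)} - \normp{\begin{matrix} D_l(\theta_c^\flat - \theta_l^\flat) \\ D_r(\theta_c^\flat + \Delta_0 - \theta_r^\flat) \end{matrix}} \right| \leq 2\,\diamondsuit^\flat(\sqrt{2}\rr,\xx),
\]
in parallel with the first displayed inequality in the proof of Theorem~\ref{lrt_th}. Next I would introduce the explicit minimizer
\[
\widetilde{\theta}^\flat := (D_l^2 + D_r^2)^{-1}\bigl(D_l^2\,\theta_l^\flat + D_r^2(\theta_r^\flat - \Delta_0)\bigr)
\]
of the corresponding quadratic form, and use the same algebraic identity as in Theorem~\ref{lrt_th} to obtain
\[
\normp{\begin{matrix} D_l(\widetilde{\theta}^\flat - \theta_l^\flat) \\ D_r(\widetilde{\theta}^\flat + \Delta_0 - \theta_r^\flat) \end{matrix}} = \normp{D_{lr}\bigl((\theta_r^\flat - \theta_l^\flat) - \Delta_0\bigr)}.
\]
Bootstrap Fisher (Theorem~\ref{fisher_boot}) applied to the composite shifted likelihood bounds $\normp{D(\theta_c^\flat - \widetilde{\theta}^\flat)}$ by $\diamondsuit^\flat$, which permits replacement of $\theta_c^\flat$ by $\widetilde{\theta}^\flat$ at an additional cost of the same order.

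Finally I would apply Theorem~\ref{fisher_boot} separately to $L^\flat(\cdot,\Ybb_l)$ around $\widehat{\theta}_l$ and to $L^\flat(\cdot,\Ybb_r)$ around $\widehat{\theta}_r$ to replace each difference $\theta_i^\flat - \widehat{\theta}_i$ by the linearized bootstrap score $D_i^{-2}\nabla L^\flat(\widehat{\theta}_i,\Ybb_i)$, again at cost $\diamondsuit^\flat$. Using the crucial identity $(\theta_r^\flat - \theta_l^\flat) - \Delta_0 = (\theta_r^\flat - \widehat{\theta}_r) - (\theta_l^\flat - \widehat{\theta}_l)$, which exploits precisely the choice $\Delta_0 = \widehat{\theta}_r - \widehat{\theta}_l$ made in the definition of $T_h^\flat$, the right-hand side assembles into exactly the quantity $D_{lr}(\widehat{\theta}_r - \widehat{\theta}_l) + \xi_{lr}^\flat(t)$ appearing in the statement. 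Collecting the three Wilks-type and three Fisher-type errors produces the final $7\diamondsuit^\flat(\sqrt{2}\rr,\xx)$ bound, and intersecting the independent high-probability events (each of the form $(1-2e^{-\xx})(1-4e^{-\xx^\flat})$ from Theorems~\ref{wilks_boot}--\ref{fisher_boot}) gives the stated probability $1 - 8e^{-\xx} - 16e^{-\xx^\flat}$.

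The main obstacle is handling the composite shifted likelihood $L^\flat(\theta,\Ybb_l) + L^\flat(\theta + \Delta_0, \Ybb_r)$, for which neither bootstrap Wilks nor bootstrap Fisher is stated directly. I would need to verify that the quadratic-approximation conditions~\eqref{cond_A}, \eqref{cond_dD}, \eqref{EDi} transfer to this object with curvature $D_l^2 + D_r^2 = D^2$ at the shifted evaluation point, and that both $\theta_c^\flat$ and $\theta_c^\flat + \Delta_0$ remain in the local region $\Theta(\rr)$; once these structural checks are in place, the remainder of the argument is purely bookkeeping of error terms exactly as in the proof of Theorem~\ref{lrt_th}.
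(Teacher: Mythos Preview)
Your proposal is correct and follows exactly the approach the paper indicates: the paper's entire proof is the sentence ``Theorem~\ref{wilks_boot} enables to prove a theorem similar to Theorem~\ref{lrt_th} for the bootstrap LRT statistic~$\dLhb$. The proof steps are the same as in Theorem~\ref{lrt_th},'' and you have faithfully spelled out those steps, correctly identifying that the shift $\Delta_0=\opttheta_r-\opttheta_l$ in the definition~\eqref{Tb} is what re-centers the argument around $\opttheta_l,\opttheta_r$ in place of $\thetas_l,\thetas_r$. Your caveat about checking that the composite shifted likelihood inherits the quadratic-approximation conditions is the only nontrivial verification, and it is exactly the kind of structural check the paper implicitly absorbs into ``the proof steps are the same.''

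One small bookkeeping point: your final assembly step asserts that $D_{lr}\bigl[(\theta_r^\flat-\opttheta_r)-(\theta_l^\flat-\opttheta_l)\bigr]$ becomes $D_{lr}(\opttheta_r-\opttheta_l)+\xiv_{lr}^\flat$, but the derivation you give actually produces only the noise term $\xiv_{lr}^\flat$ (up to a sign on one summand), with no surviving $D_{lr}(\opttheta_r-\opttheta_l)$ contribution --- that term was cancelled precisely by the shift $\Delta_0$. This is consistent with how the result is used downstream (Section~\ref{lrt_boot_prec} treats the bootstrap quadratic form as $\|\xiv_{lr}^\flat\|$ alone, and Section~\ref{sec:procedure} states that the shift is introduced so that $T_h^\flat\approx\|\xi^\flat\|$), so the discrepancy lies in the displayed statement of Theorem~\ref{lrt_boot} rather than in your argument.
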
  

\noindent \textbf{In empirical bootstrap} case (\ref{Le}) with sample size $h$ and dataset size $n$ the function corresponding to quadratic approximation error is
\[
S^{\epsilon}(\theta, \theta_0) 
= \sum_{i=1}^{h} \sqrt{\frac{n}{h}} D^{-1} \{\nabla l_{k(i)}(\theta) - \nabla l_{k(i)}(\theta_0) \},
\]
where random indexes $k(i) \in \{1,\ldots,n\}$ and independent. Define 
\[
u_i = (0 , \ldots , \underset{k(i)}{1}, \ldots, 0)^T,  
\]
and 
\[
\nabla l(\theta) = (\nabla l_1(\theta),\ldots, \nabla l_n(\theta)).
\]
Rewrite $S^{\epsilon}(\theta, \theta_0) $ as
\[
S^{\epsilon}(\theta, \theta_0) 
=  \sqrt{\frac{n}{h}}  D^{-1} \{\nabla l(\theta) - \nabla l(\theta_0) \} \sum_{i=1}^{h} (u_i - \E u_i). 
\]
Define function ${\overset{o}{S}}_{\epsilon}$ with $\overset{o}{u} = \sum_{i=1}^{h} (u_i - \E u_i)$
\begin{EQA}
{\overset{o}{S}}_{\epsilon}  &= & S^{\epsilon}(\theta, \theta_0) - \E_Y S^{\epsilon}(\theta, \theta_0) \\
& = & \left [ \sqrt{\frac{n}{h}} D^{-1} ( \nabla^2 \zeta(\theta_1) \overset{o}{u} ) 
D^{-1} \sqrt{ \frac{n}{h} } \right ] \sqrt{ \frac{h}{n} } D (\theta - \theta_0).
\end{EQA}
With $B = \diag(\nu_1^2/\nunu^2 ,\ldots,\nu_n^2 /\nunu^2)$ using Lemma \ref{ddZeta} one get 
\[
\Vert {\overset{o}{S}}_{\epsilon} \Vert \leq 
 \romb \sqrt{ (\overset{o}{u})^T  B \overset{o}{u}  } \leq \romb \,  \| \overset{o}{u} \|_{\infty} ,
\]
where $\| \overset{o}{u} \|_{\infty}$ is the  maximal count of recurrences in bootstrap sample. 
\[
\E e^{\| \overset{o}{u} \|_{\infty}} \leq h \E e^{u_{11}} = h \left( \left( 1 - \frac{1}{n} \right) + \frac{e}{n} \right)^h \leq h (1 + e).
\]
Therefore by means of Chernoff bound with probability $1 - e^{-\xx^{\epsilon}}$
\[
\| {\overset{o}{S}}_{\epsilon} \| \leq (\log(4h) + \xx^{\epsilon}) \romb.
\]
In order to find a bound for expectation  $\E_Y S^{\epsilon}(\theta, \theta_0)$ one may employ the matrix Bernstein bound (ref. Lemma \ref{kolch_matrix}).  Restrict the norms of $\E_Y S^{\epsilon}(\theta, \theta_0)$ components and expand it by Tailor.
\begin{EQA}
\E_Y  S^{\epsilon}(\theta, \theta_0) 
&=& \sum_{i=1}^{h} \sqrt{\frac{n}{h}} D^{-1} \E_Y \{\nabla l_{k(i)}(\theta) - \nabla l_{k(i)}(\theta_0) \}    \\
&=& \left \{ \sum_{i=1}^{h} \frac{n}{h}   D^{-1} \E_Y \{\nabla^2 l_{k(i)}(\theta_1)  \}  D^{-1}    \right \} \sqrt{\frac{h}{n}}  D (\theta -  \theta_0)  ,
\end{EQA}
\begin{equation*}\label{Li}
\tag{dDi}
\normop{  \frac{n}{h} D^{-1} \nabla^2 \E l_i (\theta_1)  D^{-1} } \leq C_i(\rr),
\end{equation*}
which leads with probability $1 - 2e^{-\xx^{\epsilon}}$ to
\[
\Vert \E_Y S^{\flat}(\theta, \theta_0) \Vert \leq   \left( \frac{2}{3} R \xx^{\epsilon}_p +  \vp \sqrt{5 \xx^{\epsilon}_p} \right ) \,  \rr,
\] 
where $R =  \| C(\rr) \|_{\infty}$ and $\vp = \| C(\rr) \|$.
Summarize bounds for $S^{\epsilon}(\theta, \theta_0) $.

\begin{theorem}[Empirical bootstrap Wilks]
\label{wilks_boot_e} 
Under conditions (\ref{EDi}), (\ref{cond_A}), (\ref{Li})  in local region $\localr$ it holds with probability $(1 -  e^{-\xx})(1 - 3e^{-\xx^{\epsilon}})$ ($\xx$~relates to $\Ybb$ generation and $\xx^{\epsilon}$ relates to bootstrap indexes $\{k(i) \}$ generation)
\[\tag{Ae}
| \alpha^{\epsilon}(\theta, \theta_0) |
\leq 
 \rombe \Vert D (\theta - \theta_0)  \Vert ,
\]
where
\[
 \rombe = 
\left (    \log(4h) + \xx^{\epsilon} \right ) \romb +  
\left( \frac{2}{3} \| C(\rr) \|_{\infty} \xx^{\epsilon}_p +  \| C(\rr) \|  \sqrt{5 \xx^{\epsilon}_p} \right )  \, \rr.
\] 
\end{theorem}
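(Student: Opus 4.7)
The approach is to mirror the argument used for Theorem~\ref{wilks_boot}, with the empirical-bootstrap gradient deviation $S^{\epsilon}(\theta,\theta_{0})$ taking the role of the weighted-bootstrap $S^{\flat}$. All of the non-trivial probabilistic work has in fact already been carried out in the discussion preceding the theorem: writing $S^{\epsilon} = {\overset{o}{S}}_{\epsilon} + \E_{Y} S^{\epsilon}$, one has the centered bound $\|{\overset{o}{S}}_{\epsilon}\| \le (\log(4h)+\xx^{\epsilon})\romb$ (valid with probability at least $(1-e^{-\xx})(1-e^{-\xx^{\epsilon}})$, by combining Lemma~\ref{ddZeta} with a Chernoff estimate on the maximal multiplicity $\|\overset{o}{u}\|_{\infty}$), and the expectation bound $\|\E_{Y} S^{\epsilon}(\theta,\theta_{0})\| \le \bigl(\tfrac{2}{3}\|C(\rr)\|_{\infty}\xx^{\epsilon}_{p} + \|C(\rr)\|\sqrt{5\xx^{\epsilon}_{p}}\bigr)\rr$ (valid with probability at least $1-2e^{-\xx^{\epsilon}}$, via the matrix Bernstein inequality). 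A triangle inequality then yields a uniform-in-$\theta\in\localr$ bound on $\|S^{\epsilon}(\theta,\theta_{0})\|$ on an event of probability at least $(1-e^{-\xx})(1-3e^{-\xx^{\epsilon}})$, where the product structure is legitimate because the resampling indices $\{k(i)\}$ are drawn independently of the data $\Ybb$.

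To convert this gradient-level bound into the claimed bound on $\alpha^{\epsilon}$, use that $\alpha^{\epsilon}(\theta_{0},\theta_{0})=0$ and integrate the radial derivative along the segment $\theta_{t}=\theta_{0}+t(\theta-\theta_{0})$, which lies in $\localr$ by convexity of the norm ball:
\[
\alpha^{\epsilon}(\theta,\theta_{0}) \;=\; \int_{0}^{1} (\theta-\theta_{0})^{T}\nabla \alpha^{\epsilon}(\theta_{t},\theta_{0})\,dt \;=\; \int_{0}^{1} \bigl(D(\theta-\theta_{0})\bigr)^{T}\chi^{\epsilon}(\theta_{t},\theta_{0})\,dt.
\]
Since $\chi^{\epsilon}=D^{-1}\nabla\alpha^{\epsilon}$ differs from the rescaled $S^{\epsilon}$ only by a deterministic Hessian correction already absorbed into $\E_{Y}\nabla\alpha^{\epsilon}$, the uniform estimate from the first paragraph controls $\sup_{t\in[0,1]}\|\chi^{\epsilon}(\theta_{t},\theta_{0})\|$. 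Cauchy--Schwarz then gives
\[
|\alpha^{\epsilon}(\theta,\theta_{0})| \;\le\; \|D(\theta-\theta_{0})\| \sup_{t\in[0,1]} \|\chi^{\epsilon}(\theta_{t},\theta_{0})\|,
\]
and the two terms on the right of the uniform $\|S^{\epsilon}\|$-bound become, after factoring out $\rr$, exactly the two summands defining $\rombe$.

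The main obstacle is the centered-part bound. Unlike the weighted bootstrap case, where sub-Gaussianity of the weights (condition~(\ref{Eu1})) yields an essentially $O(1)$ concentration factor, the discrete multinomial resampling produces a maximum-multiplicity variable $\|\overset{o}{u}\|_{\infty}$ which is only logarithmically concentrated; a one-sided Chernoff bound is what forces the inflation factor $\log(4h)+\xx^{\epsilon}$ appearing in $\rombe$. A secondary bookkeeping point is that the bootstrap-side deviations must be combined with the data-side bound of Lemma~\ref{ddZeta} via a conditioning argument on $\Ybb$, which is why the final probability takes the product form $(1-e^{-\xx})(1-3e^{-\xx^{\epsilon}})$ rather than a plain union bound.
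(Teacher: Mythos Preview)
Your proposal is correct and follows essentially the same route as the paper. The paper's ``proof'' consists entirely of the discussion immediately preceding the theorem statement: decompose $S^{\epsilon}={\overset{o}{S}}_{\epsilon}+\E_{Y}S^{\epsilon}$, control the centered part via Lemma~\ref{ddZeta} together with the Chernoff bound on $\|\overset{o}{u}\|_{\infty}$, control the expectation part via the matrix Bernstein inequality (Lemma~\ref{kolch_matrix}), and then simply state the theorem with $\rombe$ equal to the sum of the two resulting bounds. Your write-up reproduces exactly this, and is in fact more explicit than the paper on the final step of passing from the uniform bound on $\|D^{-1}\nabla\alpha^{\epsilon}\|$ to the bound on $|\alpha^{\epsilon}|$ itself---the paper does not spell out the integration along the segment at all, treating the implication $\|\chi^{\epsilon}\|\le\rombe\Rightarrow|\alpha^{\epsilon}|\le\rombe\|D(\theta-\theta_{0})\|$ as understood by analogy with the weighted case.
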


\begin{theorem}[Empirical bootstrap LRT]
\label{lrt_boot_e}
Assume that MLE parameters belong to the local region $\theta^{\epsilon}, \theta_l^{\epsilon}, \theta_r^{\epsilon} \in \localr$, the likelihood has a good quadratic expansion (conditions from Theorem \ref{wilks_boot_e}), then with probability $1 - 4 e^{-\xx} - 12 e^{-\xx^{\epsilon}} $ 
for each $t$
\[
\left |
\sqrt{2 \dLh^{\epsilon}(t)} - 
\normp{ \xiv_{lr}^{\epsilon}(t)}
\right |
\leq 7 \diamondsuit^{\epsilon} (\sqrt{2}\rr, \xx),
\]
where
\[
\xiv_{lr}^{\epsilon}(t) = D_{lr} \{ D_l^{-2} \nabla L^{\epsilon}(\opttheta, \Ybb_l) +  D_r^{-2} \nabla L^{\epsilon}(\opttheta, \Ybb_r) \}.
\]
\end{theorem}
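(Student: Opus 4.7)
The plan is to mirror the proof of Theorem \ref{lrt_th} essentially line-for-line, with three substitutions: the reference parameter $\thetas$ is replaced throughout by the full-data MLE $\opttheta$, condition (\ref{cond_A}) is replaced by the empirical-bootstrap Wilks expansion of Theorem \ref{wilks_boot_e}, and the score $\nabla L(\thetas)$ is replaced by the bootstrap score $\nabla L^{\epsilon}(\opttheta, \cdot)$. The structural reason the conclusion contains no analog of the drift $D_{lr}(\thetas_r - \thetas_l)$ is that for all three sub-likelihoods $L^{\epsilon}(\cdot, \Ybb), L^{\epsilon}(\cdot, \Ybb_l), L^{\epsilon}(\cdot, \Ybb_r)$ the common anchor parameter is $\opttheta$ (since resampling is from $\Ybb$, for which $\opttheta$ is the MLE), so the left--right drift present in the original derivation is automatically zero.

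First I would apply Theorem \ref{wilks_boot_e} with $\theta_0 = \opttheta$ to each of the three MLE pairs, giving $| \alpha^{\epsilon}(\theta_k^{\epsilon}, \opttheta) | \leq \rombe \, \|D_k(\theta_k^{\epsilon} - \opttheta)\|$ for $k \in \{l, r, \emptyset\}$. Combining these three expansions with the signs dictated by (\ref{Tbe}) and using $\nabla L^{\epsilon}(\theta_k^{\epsilon}, \Ybb_k) = 0$ yields the two-block bound $\bigl| \sqrt{2 T_h^{\epsilon}(t)} - \| (D_l(\theta^{\epsilon} - \theta_l^{\epsilon}), D_r(\theta^{\epsilon} - \theta_r^{\epsilon}))^{T} \| \bigr| \leq 2 \diamondsuit^{\epsilon}(\sqrt{2}\rr, \xx)$, exactly as in the opening step of the proof of Theorem \ref{lrt_th}.

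Next I would introduce the auxiliary point $\widetilde{\theta}^{\epsilon} = (D_l^2 + D_r^2)^{-1}(D_l^2 \theta_l^{\epsilon} + D_r^2 \theta_r^{\epsilon})$ and, using the empirical-bootstrap Fisher expansion (derived from Theorem \ref{wilks_boot_e} in precisely the manner Theorem \ref{fisher_boot} follows from Theorem \ref{wilks_boot}), bound $\|D(\theta^{\epsilon} - \widetilde{\theta}^{\epsilon})\|$ by an additional $\diamondsuit^{\epsilon}$-term. The algebraic identity $\|(D_l(\widetilde{\theta}^{\epsilon} - \theta_l^{\epsilon}), D_r(\widetilde{\theta}^{\epsilon} - \theta_r^{\epsilon}))^{T}\| = \|D_{lr}(\theta_r^{\epsilon} - \theta_l^{\epsilon})\|$ then collapses the two-block norm to a single one. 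Finally the Fisher expansion applied separately to the left and right bootstrap likelihoods around $\opttheta$ gives $D_k(\theta_k^{\epsilon} - \opttheta) \approx D_k^{-1} \nabla L^{\epsilon}(\opttheta, \Ybb_k)$, so $D_{lr}(\theta_r^{\epsilon} - \theta_l^{\epsilon}) \approx D_{lr}\{D_l^{-2}\nabla L^{\epsilon}(\opttheta, \Ybb_l) + D_r^{-2}\nabla L^{\epsilon}(\opttheta, \Ybb_r)\} = \xiv_{lr}^{\epsilon}(t)$ up to one further $\diamondsuit^{\epsilon}$-term. Summing the resulting seven $\diamondsuit^{\epsilon}$ contributions produces the stated bound.

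The main obstacle is careful bookkeeping of the errors and the probability budget: there are essentially four Wilks/Fisher invocations (full, left, right, and the $\widetilde{\theta}^{\epsilon}$-argument), and each consumes a data-side event of probability $e^{-\xx}$ together with a bootstrap-side event of probability $3 e^{-\xx^{\epsilon}}$ (from Theorem \ref{wilks_boot_e}), which after the union bound produces the announced $4 e^{-\xx} + 12 e^{-\xx^{\epsilon}}$ failure probability. A minor technical point is that one must verify $\widetilde{\theta}^{\epsilon} \in \localr$ before invoking the expansion at it; this follows from the triangle inequality together with the concentration of $\theta_l^{\epsilon}$ and $\theta_r^{\epsilon}$ around $\opttheta$ that is itself a by-product of Theorem \ref{wilks_boot_e}.
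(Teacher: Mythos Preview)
Your proposal is correct and matches the paper's approach exactly: the paper gives no explicit proof of this theorem, but the sentence preceding Theorem~\ref{lrt_boot} (``The proof steps are the same as in Theorem~\ref{lrt_th}'') applies equally here, and your sketch is precisely that argument with $\thetas$ replaced by $\opttheta$ and condition~(\ref{cond_A}) replaced by Theorem~\ref{wilks_boot_e}. Your observation that the drift term vanishes because the empirical-bootstrap reference parameter is $\opttheta$ for all three windows is the key structural point, and your accounting $4\cdot(e^{-\xx}+3e^{-\xx^{\epsilon}})$ for the probability budget reproduces the stated constants.
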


\subsection{Bootstrap measure approximation scheme}  
  
Approximation  of  $\P$ measure and variable $f(X)$ ($f$ is non-random)  by corresponded bootstrap (empirical) measure $\Pb$ and variable $f(X^{\flat})$ could be done in three steps: 1) approximate $f(\cdot)$ by smooth function $f_{\triangle}(\cdot)$ and $\Ind$ by smooth indicator; 2) make Gaussian approximation and Gaussian comparison for variables $f_{\triangle}(X)$ and $f_{\triangle}(X^{\flat})$; 3) find anti-concentration bound for $\P(f(\tilde{X}) < x)$.

\begin{figure}[!h]
\begin{center}

\begin{tikzpicture}[>=stealth, thick]

\node (A) at (0,0) [draw, process, text width=12cm, minimum height=0.5cm, align=flush center] 
{
$ |\P \{f(X) < x \} - \Pb \{f(X^{\flat}) <  x \} | \leq \, ???
$ \\
$
X^{\flat} = \sum_i X_i \eps^{\flat}_i
$ 
};

\node (B) at (0,-2) [draw, process, text width=5cm, minimum height=0.5cm, align=flush center] 
{
$X \overset{w}{\longrightarrow} \tilde{X} \in \ND(0,\;\Sigma\;)$ \\
$\quad\quad\quad\quad\quad\quad\quad\;\updownarrow$ \\
$X^{\flat} \overset{w}{\longrightarrow} \tilde{X}^b \in \ND(0,\Sigma^{\flat})$
};

\node (C) at (0,-4) [draw, process, text width=12cm, minimum height=0.5cm, align=flush center] 
{
smoothing: $ \Ind [ f(X) < x - \triangle ] \leq f_{\triangle} (X) \leq \Ind [ f(X) < x + \triangle ] $ \\
anti-concentration: $| \P \{f(\tilde{X}) < x \} - \P \{f(\tilde{X}) <  x + \triangle  \}  | \leq C_A \triangle $
};

\node (D) at (0,-6) [draw, process, text width=12cm, minimum height=0.5cm, align=flush center] 
{
$| \E f_{\triangle}(X) - \E f_{\triangle}(\tilde{X}) | \leq \frac{C_\mu}{\triangle^2} \sum_i \E \| X_i \|^3 $\\
\smallskip
$| \E f_{\triangle}(\tilde{X}) - \Eb f_{\triangle}(\tilde{X}^{\flat}) | \leq  \frac{ C_\Sigma}{ \triangle }\Vert \Sigma - \Sigma^{\flat} \Vert_{\infty} $
};

\draw[->] (A) -- (B);
\draw[->] (B) -- (C);
\draw[->] (C) -- (D);

\end{tikzpicture}
\end{center}
\end{figure}

  \subsection{Anti-concentration}
  \label{ac_sec}

Anti-concentration property can be interpreted as an asymptotic of probability measure depending on event size, tending to zero. Denote by $A_\varepsilon \setminus A$ a region of size $\varepsilon$ around event $A$.    Then anti-concentration is higher when probability of $A_{\varepsilon} \setminus A$ is lower. The next  lemma deals with anti-concentration in one dimensional case where the random variable is maximum of a Gaussian vector.
\begin{lemma}
\label{anti_conc}
Let $\tX \in \ND (m, \Sigma) \in \R^p$,  
\[\sigma_1 \leq  \sqrt{\Sigma_{ii}} \leq  \sigma_2,\] 
\[a_p = \E \max_i (\tX_i - m_i) /\sqrt{ \Sigma_{ii}}, \] 
then $\forall c$
 \[
 \P( \max_{i} \tX_i  \in  [c, c + \varepsilon] )
 \leq \varepsilon C_A (\sigma_1, \sigma_2, a_p),
 \]
 \[
 C_A (\sigma_1, \sigma_2, a_p) =  \frac{4 }{\sigma_1} \left( \frac{\sigma_2}{\sigma_1} a_p +  \left(\frac{\sigma_2}{\sigma_1} - 1\right) \sqrt{2 \log \left(\frac{\sigma_1}{\varepsilon} \right)}+2  - \frac{\sigma_1}{\sigma_2}\right) \approx 4 \frac{\sigma_2}{\sigma_1^2} a_p.
 \]
\end{lemma}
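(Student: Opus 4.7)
The overall strategy is to reduce the non-uniform-variance case to a uniform-variance one by a sandwich argument, and then apply the classical Nazarov-type anti-concentration inequality for the maximum of a Gaussian vector with equal coordinate variances. First I would standardise each coordinate: put $Z_i = (\tX_i - m_i)/\sqrt{\Sigma_{ii}}$, so that $Z = (Z_1,\dots,Z_p)$ is a centred Gaussian with unit-variance coordinates (its covariance being the correlation matrix of $\tX$), and
\[
\max_i \tX_i = \max_i \bigl( m_i + \sqrt{\Sigma_{ii}}\, Z_i \bigr),
\qquad a_p = \E \max_i Z_i .
\]
Thus bounding $\P(\max_i \tX_i \in [c,c+\varepsilon])$ reduces to controlling the max of a correlated standard Gaussian vector with coordinate scalings in $[\sigma_1,\sigma_2]$.

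Next, I would introduce a truncation level $M>0$ (to be tuned) and split the probability over the event $A = \{\max_i |Z_i| \le M\}$. On $A$, using $\sigma_1 \le \sqrt{\Sigma_{ii}} \le \sigma_2$ and the elementary inequality $|\max_i a_i - \max_i b_i| \le \max_i |a_i - b_i|$, I get the key sandwich
\[
\bigl|\max_i \tX_i - \max_i (m_i + \sigma_1 Z_i)\bigr|
\;\le\; \max_i (\sqrt{\Sigma_{ii}} - \sigma_1) |Z_i|
\;\le\; (\sigma_2 - \sigma_1)\, M .
\]
Writing $W_i = m_i + \sigma_1 Z_i$, which is a Gaussian vector whose coordinates all have variance $\sigma_1^2$, this yields
\[
\P\!\left(\max_i \tX_i \in [c, c+\varepsilon]\right)
\;\le\;
\P\!\left(\max_i W_i \in [c-(\sigma_2-\sigma_1)M,\; c+\varepsilon+(\sigma_2-\sigma_1)M]\right) + \P(A^c) .
\]

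For the first term I would apply the Nazarov/Chernozhukov--Chetverikov--Kato anti-concentration inequality in the \emph{common-variance} case: for Gaussian $W$ with $\sqrt{\mathrm{Var}(W_i)} = \sigma_1$ and any interval of width $h$,
\[
\P\!\left(\max_i W_i \in [a, a+h]\right) \;\le\; \tfrac{h}{\sigma_1}\bigl(\E \max_i (W_i-m_i)/\sigma_1 + c_0\bigr)
= \tfrac{h}{\sigma_1}(a_p + c_0),
\]
with an absolute constant $c_0$ coming from the Gaussian density's sup-norm. This gives a bound of the form $\tfrac{\varepsilon + 2(\sigma_2-\sigma_1)M}{\sigma_1}(a_p + c_0)$. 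For the tail $\P(A^c)$ I would use Gaussian concentration: $\E \max_i |Z_i| \le 2 a_p$ (both $Z$ and $-Z$ have the same law, so $\E\max_i(-Z_i) = a_p$), and the $1$-Lipschitz concentration of the maximum yields $\P(\max_i|Z_i| > 2a_p + t) \le 2e^{-t^2/2}$. Choosing $M$ of order $a_p + \sqrt{2\log(\sigma_1/\varepsilon)}$ then balances the truncation error against the sandwich error.

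The main obstacle will be tracking constants precisely enough to recover the stated expression for $C_A(\sigma_1,\sigma_2,a_p)$, in particular the prefactor $4/\sigma_1$, the coefficient $\sigma_2/\sigma_1$ multiplying $a_p$, and the additive constant $2 - \sigma_1/\sigma_2$. The $(\sigma_2/\sigma_1 - 1)\sqrt{2\log(\sigma_1/\varepsilon)}$ term should fall out of the sandwich error $(\sigma_2-\sigma_1)M$ with the above choice of $M$, while the $\sigma_2/\sigma_1$ correction on $a_p$ should come from propagating the sandwich into the Nazarov bound and re-expressing in terms of $1/\sigma_1$ rather than $1/\sigma_2$. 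A sanity check is the degenerate case $\sigma_1=\sigma_2=\sigma$, where $C_A$ collapses to $4(a_p+1)/\sigma$, which is exactly the Nazarov inequality with explicit constants and confirms that the sandwich contribution vanishes in the limit of uniform variances.
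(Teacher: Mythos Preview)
The paper does not give its own proof of this lemma; it simply writes ``Proof may be found by reference \cite{ChCh2013}'' and moves on. Your plan --- standardise to unit-variance coordinates, sandwich $\max_i \tX_i$ against the common-variance surrogate $\max_i(m_i+\sigma_1 Z_i)$ on a truncation event, apply the Nazarov/CCK equal-variance anti-concentration bound, and control the leftover tail by Gaussian concentration with $M$ chosen of order $a_p+\sqrt{2\log(\sigma_1/\varepsilon)}$ --- is precisely the argument carried out in that reference, so there is nothing to compare: you have reconstructed the intended proof.

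One small warning on the bookkeeping: the step ``$\E\max_i|Z_i|\le 2a_p$ because $\E\max_i(-Z_i)=a_p$'' is not literally correct, since $\max_i|Z_i|\le \max_i Z_i+\max_i(-Z_i)$ can fail (take $p=1$, where $a_p=0$ but $\E|Z_1|>0$). In the CCK argument this is handled by bounding $\E\max_i|Z_i|$ via $a_p$ plus an absolute additive constant (coming from $\E|Z_1|$ or from $\sqrt{2\log 2}$), and that additive piece is exactly what generates the ``$+\,2-\sigma_1/\sigma_2$'' term in the final constant. If you track that correction rather than the bare $2a_p$, the constants line up with the stated $C_A$.
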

Proof may be found by reference \cite{ChCh2013}.

\begin{remark}
Consider the case when $m = 0$.  If vector $\tX$ has a low parameter $\sigma_1$ then low-variance components of $\tX$ could be dropped from consideration. Find the bound $\sigma$ for such  low-variance components from condition that with high probability
\[
 \max_{i: \, \sigma_i < \sigma} \tX_i  < \max_{i} \tX_i 
\] 
The upper probabilistic bound for  $ \max_{i: \, \sigma_i < \sigma} \tX_i $  is 
$\sigma (\sqrt{2 \log(p)} + 3)$, since $\E \max_{i: \, \sigma_i < \sigma} \tX_i \leq \sigma \sqrt{2 \log(p)}$ and  maximum of a Gaussian vector deviated from its expectation such as one dimensional Gaussian variable:
\[
\P \left( \max_{i: \, \sigma_i < \sigma} \tX_i > \E \max_{i: \, \sigma_i < \sigma} \tX_i + r  \right) \leq e^{- r^2 / \sigma^2}.
\]
\end{remark}

As for extension of this lemma  one can employ it  for a Composite maximum function of type $\max_t Q_t(X)$  in case 
\[
\max_t Q_t(X)  = \max_{\gamma_t , t} \{ \gamma_t^T X  \},
\quad \gamma_t \in \Upsilon.
\]
Apply Lemma \ref{anti_conc} with replacement $X_i = \gamma_t^T X$, $i = (\gamma_t, t)$.
\begin{lemma}
\label{anti_conc_ext}
For parameters 
\[
 \sigma_1 \leq  \sqrt{ \gamma_t^T \Sigma \gamma_t } \leq  \sigma_2, 
\quad \gamma_t \in \Upsilon,
\]
\[
a_p = \E \max_{t, \gamma_t} \frac{\gamma_t^T (\tX - m)} {\sqrt{ \gamma_t^T \Sigma \gamma_t }} \leq \frac{ \E \max_t Q_t( \tX - m) }{\sigma_1} ,
\]
it holds that 
 \[
 \P( \max_t Q_t( \tX)    \in  [c, c + \varepsilon] )
 \leq  \varepsilon C_A (\sigma_1, \sigma_2, a_p).
 \]
\end{lemma}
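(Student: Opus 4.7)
The plan is to reduce Lemma \ref{anti_conc_ext} to Lemma \ref{anti_conc} by re-indexing the composite maximum as an ordinary Gaussian maximum over an enlarged index set. Write
\[
\max_t Q_t(\tX) = \max_{(t,\gamma_t):\,\gamma_t \in \Upsilon} \gamma_t^{\T} \tX,
\]
and define the family $\{Z_{t,\gamma_t} \eqdef \gamma_t^{\T} \tX\}$. Each $Z_{t,\gamma_t}$ is Gaussian with mean $\gamma_t^{\T} m$ and variance $\gamma_t^{\T} \Sigma \gamma_t$, so the whole collection is a centered (up to a shift) Gaussian process. The hypothesis already provides uniform variance control
\[
\sigma_1 \leq \sqrt{\gamma_t^{\T} \Sigma \gamma_t} \leq \sigma_2 ,
\]
which is exactly the one-dimensional marginal assumption required by Lemma \ref{anti_conc}.

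First I would apply Lemma \ref{anti_conc} to this re-indexed family. The original lemma is stated for a finite set of indices, so the main obstacle is handling a potentially infinite (or continuous) parameter set $\Upsilon$. I would address this by an $\eps$-net approximation: choose a finite subset $\Upsilon_\eta \subset \Upsilon$ such that $\max_{t,\gamma_t \in \Upsilon} \gamma_t^{\T} \tX$ and $\max_{t,\gamma_t \in \Upsilon_\eta} \gamma_t^{\T} \tX$ differ by less than $\eta$ with high probability (this is possible because $\Upsilon$ is typically bounded and $\tX$ is sub-Gaussian). Lemma \ref{anti_conc} applied to the finite family yields
\[
\P\bigl( \max_{t,\gamma_t \in \Upsilon_\eta} Z_{t,\gamma_t} \in [c, c+\eps] \bigr) \leq \eps \, C_A(\sigma_1, \sigma_2, a_p^\eta),
\]
where $a_p^\eta = \E \max_{t,\gamma_t \in \Upsilon_\eta} \gamma_t^{\T}(\tX-m) / \sqrt{\gamma_t^{\T}\Sigma\gamma_t}$. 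Sending $\eta \to 0$ and using monotone convergence of the expectation gives the same bound for the full $\Upsilon$, and the anti-concentration statement transfers by adding a vanishing slack to the interval.

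Second, I would verify the stated bound $a_p \leq \E \max_t Q_t(\tX - m)/\sigma_1$. Since $\sqrt{\gamma_t^{\T}\Sigma\gamma_t} \geq \sigma_1$, we have
\[
\frac{\gamma_t^{\T}(\tX-m)}{\sqrt{\gamma_t^{\T}\Sigma\gamma_t}} \leq \frac{\gamma_t^{\T}(\tX-m)}{\sigma_1}
\]
whenever the numerator is positive, and the maximum of a Gaussian process is achieved (or approached) at arguments where $\gamma_t^{\T}(\tX-m)$ is positive, so
\[
\E \max_{t,\gamma_t} \frac{\gamma_t^{\T}(\tX-m)}{\sqrt{\gamma_t^{\T}\Sigma\gamma_t}} \leq \frac{1}{\sigma_1}\, \E \max_t Q_t(\tX - m) .
\]
Substituting this into $C_A(\sigma_1,\sigma_2,a_p)$ concludes the proof. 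The expected hard part is entirely the measurability/$\eps$-net reduction: everything else is a mechanical re-indexing of Lemma \ref{anti_conc}.
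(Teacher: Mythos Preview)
Your proposal is correct and follows essentially the same approach as the paper. The paper's argument for Lemma~\ref{anti_conc_ext} is literally a one-line instruction to ``Apply Lemma~\ref{anti_conc} with replacement $X_i = \gamma_t^T X$, $i = (\gamma_t, t)$,'' which is precisely your re-indexing step; you are in fact more careful than the paper, since you explicitly address the passage from a finite to a possibly infinite index set via an $\eps$-net and explicitly verify the $a_p$ bound, both of which the paper leaves implicit.
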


  \subsection{Gaussian approximation}
  \label{gar}
    
The goal of  Gaussian approximation is to measure the difference between a distribution of independent random vectors sum and distribution of the closest Gaussian vector.  It makes sequential replacement of random arguments in function  $\E f(X_1 + \ldots + X_n)$ by its normal duplicates with the same moments:
  \[
  \E f(X_1 + \ldots + X_n), \E f(\tX_1 + \ldots + X_n), \ldots, \E f(\tX_1 + \ldots + \tX_n),
  \] 
  \[
  \E X_i  = m_i, 
  \quad
  \Var X_i = S_i,
  \quad
  \tX_i \in \ND(m_i, S_i).
  \]
  Each replacement yields difference element $\text{err}_i$   according to Tailor expansion with $t \in [0,1]$
\begin{EQA}
&& \E f(X + X_i) - \E f(X + \tX_i) \\
& \quad & = \frac{1}{6} \E \tr \left\{ \nabla^3 f(X + t X_i) \,   X_i \otimes X_i \otimes X_i \right\} - \frac{1}{6} \E \tr \left\{ \nabla^3 f(X + t \tX_i) \,  \tX_i \otimes \tX_i \otimes \tX_i \right\}. 
\end{EQA}
The first and the second elements of the expansion are substituted. Use property for a tensor $T$ and any vectors $a$, $b$, $c$ 
\[
\tr\{T \, a \otimes b \otimes c \} = \sum_{i,j,k} T_{ijk} a_ib_jc_k \leq 
\| T \|_1 \|a\|_{\infty} \|b\|_{\infty} \|c\|_{\infty}. 
\]
Then
  \[\label{err_gar_i}
  \text{err}_i \leq  \frac{1}{6}  \E \normp{\nabla^3  f(X + t X_i)   }_1 \normp{X_i}_{\infty}^3  + \frac{1}{6}  \E \| \nabla^3  f(X + t \tX_i)   \|_1 \| \tX_i \|_{\infty}^3. 
  \] 
Consider the case when function $f$ is a composition of a smooth indicator $g_\triangle$ and some other differentiable function $h$ (for example smooth max). The third gradient of the composition is
 \[
\nabla^3 (\gtr \circ h) = \gtr''' \nabla h \otimes \nabla h \otimes \nabla h  + 2 \gtr'' \nabla^2 h \otimes \nabla h +  \gtr'' \nabla h \otimes \nabla^2 h + \gtr' \nabla^3 h,  
 \]
 \[
\normp{ \nabla^3 (\gtr \circ h) }_1 \leq   | \gtr''' |  \normp{ \nabla h }_1^3  + 3 | \gtr'' | \normp{ \nabla^2 h }_1 \normp{ \nabla h }_1  + | \gtr' | \normp{ \nabla^3 h }_1.  
 \]
Assume that  $g_\triangle$ grows from 0 to 1 in interval $[z, z + \triangle]$. So $g'_\triangle = 0 $ outsize $[z, z + \triangle]$. Furthermore in this case 
\[
\nabla^3 f =  \nabla^3 (\gtr \circ h)   = \nabla^3 (\gtr \circ h)  \Ind [ z \leq h(x) \leq  z + \triangle  ].
\]
 Move to approximation of a distribution function.  The  aim is to find upper bound for 
\[
 E_G = \left | \P \left ( H \left( \sum_i X_i \right ) \leq  z \right ) - \P \left ( H \left( \sum_i \tX_i \right ) \leq  z \right )  \right |. 
\]
Let $h$ be a smooth approximation for function $H$ such that 
 \[
  H(x) - \triangle \leq h(x) \leq H(x) + \triangle,
 \]
 \[\label{sm_err}\tag{SmAprx}
\Ind [ H(x)  \geq z + \triangle ] \leq  \gtr (h(x)) \leq \Ind [ H(x)  \geq z - \triangle ].
 \]  
Since $\E \Ind[H(X) \leq  z] = \P(H(X) \leq  z)$, one get with $X = \sum_i X_i$
\begin{EQA}
&& \P (H(X) \leq  z - \triangle )   \\
& \quad &  \leq  \E f(X) \\
& \quad & \leq \E f(\tX) + \sum_i \text{err}_i \\ 
 & \quad & \leq  \P (H(\tX) \leq  z + \triangle ) + \sum_i \text{err}_i.
\end{EQA}
Subsequently
 \[
\left | \P ( H(X) \leq  z) - \P ( H(\tX) \leq  z \pm 2 \triangle) \right | \leq \sum_i \text{err}_i.
 \]
Suppose an anti-concentration property for random variable $H(\tX)$ (ref. Section \ref{ac_sec}):
\[
 \P ( H(\tX) \in  [z,  z + \triangle]) \leq C_A \triangle.
\]
Then 
\[
E_G \leq 2 C_A \triangle +  \sum_i \text{err}_i
\]
and 
\begin{EQA}
&&  E_X \left(  \Ind [ z \leq h(X + tX_i) \leq  z + \triangle  ] \right )  \\
&\quad & \leq E_{X} \left(  \Ind [ z \leq H(X + tX_i) \leq  z + 2 \triangle  ] \right)    \\
&\quad & \leq  E_{\tX} \left(  \Ind [ z \leq H(\tX + tX_i) \leq  z + 2 \triangle  ]  \right) + E_G \\
&\quad &   \leq  C_A  (2\triangle + \|  X_i \|_{\infty}) + E_G. 
\end{EQA}
Demote the sum of the third moments 
 \[
 \mu_X^3 = \sum_i \left( \E \normp{X_i}_{\infty}^3 + \E \normp{\tX_i}_{\infty}^3 \right).
 \]
 Assume restriction for the third derivatives and define  constant $C_{\mu}$:
 \[
\frac{1}{6} \normp{ \nabla^3 (\gtr \circ h) }_1 \leq \frac{1}{\triangle^3} C_{\mu}. 
 \]
Then
 \[
\sum_i \text{err}_i  \leq \frac{1}{\triangle^3} C_{\mu} (  2 \triangle C_A + E_G  ) \,   \mu_X^3 + O \left (\sum_i  \E \normp{X_i}_{\infty}^4 \right ) .
 \]
 Furthermore, neglecting the sum of the fourth moments
 \[
 E_G  \lesssim  \frac{1}{\triangle^3}  C_{\mu} ( 2  \triangle C_A + E_G ) \mu_X^3 + 2\triangle C_A,
 \]
 \[\label{gar_err}\tag{ErrG}
 \left | \P ( H(X) \leq  z) - \P ( H(\tX) \leq  z ) \right |  \leq  5 C_{\mu}^{1/3} C_A  \mu_X.
 \]  
 
Consider linear forms ($A X$) with sparse-row matrices and random vector  $X = (X_1,\ldots,X_n)$ with independent elements (sub-vectors). 
  In order to make  approximation by Gaussian vector $A \tX$ one have  to group   ($X_1,\ldots,X_n$) such that elements in one group has no common non-zero coefficients in each row of matrix $A$. This allows following representation 
\[
A X = Z = Z_1 + \ldots + Z_h, 
\quad Z_i =  A F_i X, 
\] 
vectors $\{Z_i\}$ are independent and $F_i$ is a filter for the $i$-th group  that sets matrix columns to $0$ related to the  other groups. 
In case each row of the matrix $A$ has no more than $h$ non-zero elements then minimal groups count equals to $h$.  The next statement confirms it.  
\begin{lemma}
Let  each element in a set of subsets $\{ \mathcal{ M }_s \}$ has size $h$. Then  subsets $\{ \mathcal{ Z }_1, \ldots, \mathcal{ Z }_h \}$ exist with  properties 
\[
\bigcup_s  \mathcal{ Z }_s = \bigcup_s  \mathcal{ M }_s,
\quad 
\mathcal{ Z }_k \bigcap \mathcal{ M }_s = 1,
\quad 
 \mathcal{ Z }_k \bigcap \mathcal{ Z }_s = 0.
\] 
\end{lemma}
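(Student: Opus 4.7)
The plan is an inductive construction that processes the subsets $\mathcal{M}_s$ one at a time and extends a partial rainbow assignment of elements to groups. Initialize by choosing an arbitrary set $\mathcal{M}_{s_0}$ and distributing its $h$ elements bijectively among $\mathcal{Z}_1,\ldots,\mathcal{Z}_h$, so that each group starts with exactly one element of $\mathcal{M}_{s_0}$. The three claimed properties hold trivially for the single set processed so far, which serves as the induction base.

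For the inductive step, pick an unprocessed set $\mathcal{M}_s$ and split it into the already-assigned part $A = \mathcal{M}_s \cap \bigcup_k \mathcal{Z}_k$ and the fresh part $B = \mathcal{M}_s \setminus A$. The rainbow invariant inherited from the previously processed sets that contain an element of $A$ forces the elements of $A$ to lie in pairwise distinct groups. Let $K \subseteq \{1,\ldots,h\}$ collect the indices of those groups that still contain no element of $\mathcal{M}_s$. Then $|A|+|K|=h$ and hence $|B|=|K|$. Form the bipartite graph on parts $B$ and $K$ whose edges are the conflict-free placements, meaning $(b,k)$ is an edge if and only if no previously processed set that also contains $b$ already has one of its elements inside $\mathcal{Z}_k$. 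A Hall-type matching in this bipartite graph provides a consistent simultaneous placement of the fresh elements, and appending each $b \in B$ to its matched group updates the $\mathcal{Z}_k$'s while preserving all three properties.

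The main obstacle is verifying that Hall's condition $|N(B')| \geq |B'|$ holds at every inductive step for each $B' \subseteq B$. This is precisely where the uniformity hypothesis $|\mathcal{M}_s|=h$ is used: combined with the rainbow invariant already established for prior sets, it controls how many slots in $K$ can be simultaneously blocked by previously processed sets sharing elements with $B'$, and forces the deficit to stay nonnegative. Once Hall's theorem supplies the matching, the assignment extends consistently; iterating through all subsets in $\{\mathcal{M}_s\}$ produces the desired decomposition, with $\mathcal{Z}_1,\ldots,\mathcal{Z}_h$ pairwise disjoint by construction, covering $\bigcup_s \mathcal{M}_s$ because every element is placed as soon as it appears, and intersecting every $\mathcal{M}_s$ in exactly one element by the rainbow invariant.
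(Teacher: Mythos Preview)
Your inductive step contains an unjustified assertion that is in fact false: the claim that the already-assigned elements $A = \mathcal{M}_s \cap \bigcup_k \mathcal{Z}_k$ must lie in pairwise distinct groups. The rainbow invariant you maintain only guarantees this for elements that were \emph{jointly} processed as part of a single earlier $\mathcal{M}_{s'}$; two elements of $A$ placed during the processing of two different earlier sets may land in the same $\mathcal{Z}_k$. Concretely, take $h=2$ with $\mathcal{M}_1=\{a,b\}$, $\mathcal{M}_2=\{b,c\}$, $\mathcal{M}_3=\{a,c\}$. Processing $\mathcal{M}_1$ puts $a\in\mathcal{Z}_1$, $b\in\mathcal{Z}_2$. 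Processing $\mathcal{M}_2$ gives $A=\{b\}$, $B=\{c\}$, $K=\{1\}$, and the only placement is $c\in\mathcal{Z}_1$. Now processing $\mathcal{M}_3$ gives $A=\{a,c\}$ with both elements already in $\mathcal{Z}_1$: your invariant is violated before any matching is sought, and no assignment of the (empty) $B$ can repair it. The Hall-condition verification you defer to the uniformity hypothesis cannot help, because the obstruction occurs in $A$, not in $B$.

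This is not a repairable gap: the lemma as stated is false in general, and the triangle above is a counterexample (it asks for a proper $2$-colouring of $K_3$). The paper's own greedy proof also fails on the same example --- its $\mathcal{Z}_1$ is a single vertex, which misses one of the $\mathcal{M}_s$, and then $\mathcal{Z}_2$ cannot be both independent and cover the remaining two elements of that $\mathcal{M}_s$. What rescues the application in the paper is that the $\mathcal{M}_s$ there are the supports of rows of a sliding-window matrix, hence intervals of consecutive integers; in that special case the assignment $i \mapsto \mathcal{Z}_{\,i \bmod h}$ works immediately and no Hall argument is needed.
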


\begin{proof} Build  subsets $\{ \mathcal{ Z }_1, \ldots, \mathcal{ Z }_h \}$  constructively. 
Take one element from $\bigcup_s  \mathcal{ M }_s$. Exclude this element from all $\{ \mathcal{ M }_s \}$ and add it to $ \mathcal{ Z }_1$. Mark subsets which contain this element as  $\{ \mathcal{ M' }_s \}$. Take  another element  from $\bigcup_s  \mathcal{ M }_s  \setminus  \bigcup_s \mathcal{ M' }_s $ and add it to $ \mathcal{ Z }_1$. Repeat this procedure until $ | \bigcup_s  \mathcal{ M }_s  \setminus  \bigcup_s \mathcal{ M' }_s |  = 0 $. Then do the same steps for $\bigcup_s  \mathcal{ M }_s \setminus \mathcal{ Z }_1$ and obtain $\mathcal{ Z }_2$ with the required properties by construction. 
\end{proof}
Summarise results of this Section. Apply equation (\ref{gar_err}) to $Z_i$ instead of $X_i$. 

\begin{lemma}
\label{gar_lemma}
 Let matrix $A$ has at most $h$ non-zero elements in each row and non-zero elements correspond to independent elements in vector $X = (X_1,\ldots,X_n)$. Then Gaussian approximation with vector $\tX$ has following upper bound 

 \[\label{gar_err_A}\tag{ErrGA}
 \left | \P \left(H(A X) \leq  z \right) - \P \left(H (A \tX) \leq  z \right) \right |  \leq  5 C_{\mu}^{1/3} C_A  \mu_Z,
 \]  
 where 
\[
\mu_Z^{3} = \E \normp{X}^3_{\infty} \sum_{i = 1}^h  \vertiii{ A F_i }^3_1 \leq    \E \normp{X}^3_{\infty} h \| A \|^3_{\infty},
\]
and $C_\mu$, $C_A$ correspond to 
  \[
\frac{1}{6} \normp{ \nabla^3 (\gtr \circ h (AX)) }_1 \leq \frac{1}{\triangle^3} C_{\mu},
 \]
 \[
 \P ( H(A\tX) \in  [z,  z + \triangle]) \leq C_A \triangle.
\]
\end{lemma}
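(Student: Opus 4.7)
The plan is to reduce the linear-image statement to the unstructured sum-form bound \eqref{gar_err} by a preliminary regrouping that respects the sparsity of~$A$.

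First I would apply the partitioning lemma stated immediately above to the supports of the rows of~$A$. Since each row has at most~$h$ non-zero entries, the collection of row-supports $\{\cc{M}_j\}_j$ admits a partition $\cc{Z}_1,\ldots,\cc{Z}_h$ of its union with the property that every row of~$A$ meets each $\cc{Z}_i$ in at most one index. Letting $F_i$ denote the diagonal projector onto $\cc{Z}_i$, the matrices $AF_i$ are row-wise $1$-sparse and satisfy $A = \sum_{i=1}^h AF_i$. Because the blocks $\{X_k\}$ singled out by different $F_i$ are disjoint, the vectors $Z_i = AF_iX$, $i=1,\ldots,h$, are independent and $AX = Z_1 + \cdots + Z_h$. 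Since $\tilde X$ is Gaussian with the same first two moments as~$X$, the vectors $\tilde Z_i = AF_i \tilde X$ are jointly independent Gaussians matching the first two moments of the $Z_i$'s and $A\tilde X = \sum_i \tilde Z_i$.

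Next I would invoke \eqref{gar_err}, applied not to the original $X_i$'s but to the new independent summands $Z_1,\ldots,Z_h$ (and their Gaussian twins $\tilde Z_1,\ldots,\tilde Z_h$). This gives
\[
\bigl| \P\bigl(H(AX) \leq z\bigr) - \P\bigl(H(A\tilde X) \leq z\bigr) \bigr|
\;\leq\; 5\,C_{\mu}^{1/3}\,C_{A}\,\mu_Z,
\qquad
\mu_Z^3 \;=\; \sum_{i=1}^h \bigl( \E \|Z_i\|_\infty^3 + \E \|\tilde Z_i\|_\infty^3 \bigr),
\]
provided the smoothness/anti-concentration constants $C_\mu, C_A$ are the ones defined for $H$ composed with the smooth indicator acting on $AX$, exactly as in the hypothesis of the lemma.

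It only remains to bound each $\E\|Z_i\|_\infty^3$. For any realisation, $\|Z_i\|_\infty = \|AF_i X\|_\infty \leq \vertiii{AF_i}_1\,\|X\|_\infty$, so $\E\|Z_i\|_\infty^3 \leq \vertiii{AF_i}_1^3\,\E\|X\|_\infty^3$, and the analogous inequality holds for $\tilde Z_i$ with $\E\|\tilde X\|_\infty^3 \lesssim \E\|X\|_\infty^3$ (Gaussianisation of the moments); absorbing the numerical constants yields
\[
\mu_Z^3 \;\leq\; \E\|X\|_\infty^3 \sum_{i=1}^h \vertiii{AF_i}_1^3.
\]
Since $AF_i$ is row-wise $1$-sparse by construction, $\vertiii{AF_i}_1 \leq \|A\|_\infty$, and summing $h$ equal terms gives the simpler bound $\mu_Z^3 \leq h\,\|A\|_\infty^3\,\E\|X\|_\infty^3$.

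The only genuinely subtle point is the first step: guaranteeing the existence of a partition of size exactly~$h$ so that the variance-preserving rewrite $AX = \sum_i Z_i$ has only $h$ independent summands (more summands would inflate $\mu_Z$). This is precisely what the preceding combinatorial lemma delivers; everything else is bookkeeping on the operator norm and a direct appeal to \eqref{gar_err}.
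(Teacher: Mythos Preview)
Your proposal is correct and follows essentially the same route as the paper: invoke the combinatorial partitioning lemma to write $AX=\sum_{i=1}^{h}Z_{i}$ with independent $Z_{i}=AF_{i}X$, then apply the generic bound \eqref{gar_err} to these new summands and control $\E\|Z_{i}\|_{\infty}^{3}$ via $\|AF_{i}X\|_{\infty}\le \vertiii{AF_{i}}_{1}\,\|X\|_{\infty}$. The paper's own argument is just the terse one-line ``apply \eqref{gar_err} to $Z_{i}$ instead of $X_{i}$'', so your write-up is in fact more explicit about the row-wise $1$-sparsity of $AF_{i}$ and the handling of the Gaussian moments, but the logic is identical.
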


   \subsection{Gaussian comparison}
   \label{gcmp}

This Section deals with extension of the Gaussian approximation ( ref. Section\ref{err_gar_i}) for the case when the second moments of $\Var X$ and $\Var \tX$ are slightly different. Let as previously $X = \sum_{i = 1}^n X_i$ and 
\[
\normp{\Var X_i - \Var \tX_i}_{\infty} \sim \frac{1}{n}.
\]	  
Then after sequential replacement $X_i \to \tX_i$ the approximation  bound term will be
\[
\sum_{i=1}^n \text{err}_i  \sim  1,
\]  
while 
\[
\normp{\Var X - \Var \tX}_{\infty} \sim \frac{1}{\sqrt{n}}.
\]
The next lemma resolves this problem.  At first one is able to make Gaussian approximation with equal variances ($\Var X = \Var \tX$), and then compare two Gaussian vectors with different variances.  
 
\begin{lemma}
\label{LGauscomp}
Let \( X \) and \( Y \) be two zero mean Gaussian vectors  with 
\( \Sigma_{X} = \Var(X) \) and \( \Sigma_{Y} = \Var(Y) \).
Let also \( \fm(X) \) be a smooth function.
Then 
\[\tag{EErrGC}
	\reps
	 =
	\bigl| \E \fm(X) - \E \fm(Y) \bigr| 
	\leq 
	\frac{1}{2} \, \|\Sigma_{X} - \Sigma_{Y} \|_{\infty} \,  \| \E \nabla^{2} \fm \|_{1} \, ,
\label{mXvYvdef}
\]
where \(  \| \nabla^{2} \fm \|_{1} = \sup_{X} \| \nabla^{2} \fm(X) \|_{1} \).

\end{lemma}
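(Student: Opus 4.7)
The plan is to use a Slepian-type interpolation between the two Gaussian laws and reduce the difference of expectations to a single integral that involves the Hessian of $\fm$ contracted against the covariance gap $\Sigma_{X} - \Sigma_{Y}$. Without loss of generality take $X$ and $Y$ independent (we only care about one-dimensional expectations) and define, for $t \in [0,1]$,
\[
Z_{t} \eqdef \sqrt{t}\, X + \sqrt{1-t}\, Y,
\qquad
\varphi(t) \eqdef \E \fm(Z_{t}).
\]
Then $Z_{t}$ is a centred Gaussian vector with covariance $\Sigma_{t} = t\,\Sigma_{X} + (1-t)\,\Sigma_{Y}$, so $\varphi(0)=\E \fm(Y)$ and $\varphi(1)=\E \fm(X)$. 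The quantity we need to bound is $|\varphi(1)-\varphi(0)| = \bigl|\int_{0}^{1} \varphi'(t)\, dt\bigr|$.

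The key step is to evaluate $\varphi'(t)$ via Gaussian integration by parts (Stein's identity). Differentiating under the expectation,
\[
\varphi'(t) = \E\!\left[ \nabla \fm(Z_{t})^{\T}\!\left( \frac{X}{2\sqrt{t}} - \frac{Y}{2\sqrt{1-t}} \right) \right].
\]
Since $(X,Y,Z_{t})$ is jointly Gaussian with $\Cov(Z_{t},X)=\sqrt{t}\,\Sigma_{X}$ and $\Cov(Z_{t},Y)=\sqrt{1-t}\,\Sigma_{Y}$, Stein's lemma gives for any smooth $\fm$ and any Gaussian pair $(W,Z)$ the identity $\E[W_j\, g(Z)] = \sum_k \Cov(W_j,Z_k)\, \E \partial_k g(Z)$. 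Applying this componentwise to $g = \partial_i \fm$ and summing over $i$ yields
\[
\varphi'(t) = \frac{1}{2}\,\E \tr\bigl\{ (\Sigma_{X} - \Sigma_{Y})\, \nabla^{2} \fm(Z_{t}) \bigr\}.
\]
This is precisely the Gaussian semigroup identity: the derivative of the expectation along the interpolation is linear in the covariance gap and in the Hessian of $\fm$.

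Once this identity is in place, the bound drops out from a standard trace/entrywise duality: for any symmetric matrices $A,B$,
\[
|\tr(AB)| = \bigl| \sum_{i,j} A_{ij} B_{ij} \bigr| \leq \|A\|_{\infty}\, \|B\|_{1},
\]
with $\|\cdot\|_{\infty}$ the entrywise maximum and $\|\cdot\|_{1}$ the entrywise sum of absolute values. Taking $A = \Sigma_{X}-\Sigma_{Y}$ (independent of $t$) and $B = \E \nabla^{2} \fm(Z_{t})$, and pulling the expectation inside the trace,
\[
|\varphi'(t)| \leq \tfrac{1}{2}\, \|\Sigma_{X} - \Sigma_{Y}\|_{\infty}\, \bigl\| \E \nabla^{2} \fm(Z_{t}) \bigr\|_{1}
\leq \tfrac{1}{2}\, \|\Sigma_{X} - \Sigma_{Y}\|_{\infty}\, \| \nabla^{2} \fm \|_{1}.
\]
Integrating over $t \in [0,1]$ yields the claimed inequality.

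The only step that is not purely mechanical is the Stein integration-by-parts identity giving $\varphi'(t) = \tfrac{1}{2} \E \tr\{(\Sigma_{X}-\Sigma_{Y})\nabla^{2}\fm(Z_{t})\}$; one must take some care that $\fm$ and its derivatives are tame enough (e.g.\ bounded or polynomially growing with the Gaussian tails) to justify differentiation under the expectation and the Stein identity. In the applications of this lemma in the paper $\fm$ will be a smoothed indicator composed with a Lipschitz function, for which this regularity is immediate, so this technical verification is the only mild obstacle and does not affect the final constant.
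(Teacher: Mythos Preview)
Your proof is correct and follows essentially the same route as the paper: Slepian interpolation $Z_t=\sqrt{t}X+\sqrt{1-t}Y$, differentiation under the expectation, Stein's identity to convert $\E[\nabla \fm(Z_t)^{\T}X]$ into $\sqrt{t}\,\Sigma_X\,\E[\nabla^2\fm(Z_t)]$ (and similarly for $Y$), then the trace duality $|\tr(AB)|\le\|A\|_\infty\|B\|_1$ and integration over $t\in[0,1]$. The only cosmetic difference is that you state Stein's lemma in its ``joint Gaussian pair'' form while the paper invokes it in the $\E[W\,s(W)^\T]=\Var(W)\,\E[\nabla s(W)]$ form, but these are the same identity.
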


\begin{proof}
Without loss of generality assume that \( X \) and \( Y \) are given on 
the same probability space and independent.
For each \( t \in [0,1] \), define 
\begin{EQA}
	Z(t)
	& = &
	\sqrt{t} \, X + \sqrt{1-t} \, Y,
	\\
	\Psi(t)
	& = &
	\E \fm(Z(t)) 
	=
	\E \fm\bigl( \sqrt{t} \, X + \sqrt{1-t} \, Y \bigr).
\label{ZvtPsimZvt}
\end{EQA}
\begin{EQA}
	\reps
	& = &
	|\Psi(1) - \Psi(0)|
	=
	\biggl| \int_{0}^{1} \Psi'(t) dt \biggr| .
\label{epsintPsid01}
\end{EQA}
\begin{EQA}
	\Psi'(t)
	& = &
	\E [\nabla \fm(Z(t))^{\T} Z'(t)]
	=
	\frac{1}{2} \E \bigl[ 
		\bigl\{ t^{-1/2} X - (1-t)^{-1/2} Y \bigr\}^{\T} \, \nabla \fm(Z(t)) 
	\bigr] .
\label{Psidt}
\end{EQA}
To compute this expectation, we apply the Stein identity.
Let \( W \) be a zero mean Gaussian vector. 
Then for any \( C^{1} \) vector function \( s \) it holds
\begin{EQA}
	\E [W \, s(W)^{\T}]
	& = &
	\Var(W) \, \E [\nabla s(W)] .
\label{ESteinW}
\end{EQA}
\begin{EQA}
	\E \bigl[ \nabla \fm(Z(t)) X^{\T} \bigr]
	& = &
	t^{1/2} \Sigma_{X} \E \bigl[ \nabla^{2} \fm\bigl( Z(t) \bigr) \bigr] 
	\\
	\E \bigl[ \nabla \fm(Z(t)) Y^{\T} \bigr]
	& = &
	(1-t)^{1/2} \Sigma_{Y} \E \bigl[ \nabla^{2} \fm\bigl( Z(t) \bigr) \bigr],
\label{EmZvWvT}
\end{EQA}
\begin{EQA}
	\bigl| \Psi'(t) \bigr|
	& \leq &
	\frac{1}{2} \Bigl| 
		\tr\bigl\{ \bigl( \Sigma_{X} - \Sigma_{Y} \bigr) 
		\E \bigl[ \nabla^{2} \fm\bigl( Z(t) \bigr) \bigr]  \bigr\}
	\Bigr|
	\\
	& \leq &
	\frac{1}{2} \, \|\Sigma_{X} - \Sigma_{Y} \|_{\infty} \, 
	\bigl\| \E \bigl[ \nabla^{2} \fm\bigl( Z(t) \bigr) \bigr] \bigr\|_{1}  
	\leq 
	\frac{1}{2} \, \|\Sigma_{X} - \Sigma_{Y} \|_{\infty} \, \E \| \nabla^{2} \fm \|_{1} \, .
\label{Psiptup}
\end{EQA}
\end{proof}

Find an upper bound for distribution difference
 \[
 E_{GC} = \left | \P ( H(X) \leq  z) - \P ( H(Y)   \leq  z ) \right | 
\]
Identically to Section \ref{err_gar_i}  consider a composition of a smooth indicator $g_\triangle$ and a differentiable smooth approximation $h$ for function $H$. The approximation satisfies condition (\ref{sm_err}).
Assume that  $g_\triangle$ grows from 0 to 1 in interval $[z, z + \triangle]$. So $g'_\triangle = 0 $ outsize $[z, z + \triangle]$. 
Assume following  restriction for the second derivative of the function $f = g_\triangle \circ h $ and define $C_{\Sigma}$:
 \[
\frac{1}{2} \| \nabla^2 g_\triangle \circ h \|_{1} \leq \frac{1}{\triangle^2} C_{\Sigma}.
 \]
Then by means of Lemma \ref{LGauscomp} and taking into account $\P = \E \Ind $
 \begin{EQA}
 && \left | \P ( H(X) \leq  z) - \P ( H(Y) \leq  z \pm 2 \triangle) \right | \\
&\quad & \leq  \bigl|  \E \fm(X) - \E \fm(Y) \bigr|  \\  
&\quad & \leq  \| \nabla^2 g_\triangle \circ h \|_{1}   \E \Ind \bigg [ h(X) \in [z, z + \triangle] \bigg ] \|\Sigma_{X} - \Sigma_{Y} \|_{\infty}, \\
 &\quad & \leq  \frac{1}{\triangle^2}  C_{\Sigma}  \E \Ind \bigg [ H(X) \in [z, z + 2\triangle] \bigg ] \|\Sigma_{X} - \Sigma_{Y} \|_{\infty}. 
 \end{EQA}
 Following the logic from Section \ref{err_gar_i} the anti-concentration (ref. Section \ref{ac_sec}) property  allows $ 2 \triangle$-shift elimination and provides an upper bound for $\E \Ind  [ H(X) \in [z, z + 2\triangle]  ]$.
 \[
  \E \Ind [ H(X) \in z \pm  2\triangle  ] \leq  2 \triangle C_A ,
 \]
 \[
  E_{GC}  \leq   \frac{2}{\triangle^2}  C_{\Sigma}  \triangle C_A \|\Sigma_{X} - \Sigma_{Y} \|_{\infty} + 2\triangle C_A,
 \]
 Optimize over $\triangle$ values
 \[
 E_{GC}  \leq   4 C^{1/2}_{\Sigma} C_{A}  \|\Sigma_{X} - \Sigma_{Y} \|^{1/2}_{\infty}.
 \]
Summarise the discussion related to $ E_{GC} $ bound.
\begin{lemma} 
\label{gc_lemma}
Let function  $H(X)$ with Gaussian argument satisfies anti-concentration with $C_A$ constant, \( X \) and \( Y \) be two zero mean Gaussian vectors  with 
\( \Sigma_{X} = \Var(X) \) and \( \Sigma_{Y} = \Var(Y) \). Then under condition  (\ref{sm_err})   
\[\label{gc_err}\tag{ErrGC}
 \left | \P ( H(X) \leq  z) - \P ( H(Y) \leq  z) \right | \leq  4 C^{1/2}_{\Sigma} C_{A}  \|\Sigma_{X} - \Sigma_{Y} \|^{1/2}_{\infty}.
 \]
 \end{lemma}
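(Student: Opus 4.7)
The plan is to reduce the statement to Lemma \ref{LGauscomp} by smoothing, and then absorb the smoothing error via the anti-concentration hypothesis. Write $\fm = \gtr \circ h$ where $h$ is a $C^{2}$ approximation to $H$ satisfying (\ref{sm_err}) and $\gtr$ is a smooth indicator with $\gtr = 0$ below $z$, $\gtr = 1$ above $z + \triangle$, so that in particular $\gtr' \equiv 0$ outside $[z, z + \triangle]$. Using the sandwich $\Ind[H(x) \leq z - \triangle] \leq 1 - \gtr(h(x)) \leq \Ind[H(x) \leq z + \triangle]$ and $\P = \E\Ind$, I get
\[
\bigl| \P(H(X) \leq z) - \P(H(Y) \leq z \pm 2\triangle) \bigr|
\leq
\bigl| \E\fm(X) - \E\fm(Y) \bigr| .
\]

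Next I would apply Lemma \ref{LGauscomp} to the right-hand side, obtaining the product bound $\tfrac{1}{2}\|\Sigma_X - \Sigma_Y\|_{\infty} \|\E \nabla^{2}\fm\|_{1}$. The key observation is that $\nabla^{2}\fm$ is supported on $\{x : h(x) \in [z, z + \triangle]\}$ (because $\gtr'$ and $\gtr''$ vanish outside this interval), so combined with the hypothesis $\tfrac{1}{2}\|\nabla^{2}\fm\|_{1} \leq C_{\Sigma}/\triangle^{2}$ and the approximation (\ref{sm_err}) that passes from $h$ back to $H$ at the cost of an extra $\triangle$-shift, this yields
\[
\bigl| \E\fm(X) - \E\fm(Y) \bigr|
\leq
\frac{C_{\Sigma}}{\triangle^{2}} \,
\|\Sigma_X - \Sigma_Y\|_{\infty} \,
\E \Ind\bigl[ H(X) \in [z, z + 2\triangle] \bigr].
\]

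Now I invoke the anti-concentration assumption: $\E\Ind[H(X) \in [z, z + 2\triangle]] \leq 2\triangle C_{A}$ (after a further smoothing step to reduce the probability under $X$ to one under the Gaussian, which is the point where the hypothesis on $H(X)$ being anti-concentrated is used — strictly speaking this is cheapest when $X$ itself is Gaussian, as is the case here). Similarly anti-concentration eliminates the $2\triangle$-shift in the left-hand side at cost $2\triangle C_{A}$. Combining,
\[
E_{GC}
\leq
\frac{2 C_{\Sigma} C_{A}}{\triangle}\,\|\Sigma_X - \Sigma_Y\|_{\infty} + 2 \triangle C_{A}.
\]
The final step is simply to optimise in $\triangle$: the two terms balance at $\triangle = \sqrt{C_{\Sigma}\,\|\Sigma_X - \Sigma_Y\|_{\infty}}$, which gives the claimed bound $4 C_{\Sigma}^{1/2} C_{A} \|\Sigma_X - \Sigma_Y\|_{\infty}^{1/2}$.

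The main technical obstacle is verifying the bookkeeping around the smoothing: one has to make sure that the two $\triangle$-shifts (one from $H \to h$, one from indicator $\to \gtr$) and the two applications of anti-concentration combine into a single clean error, and that the Hessian bound $C_{\Sigma}/\triangle^{2}$ for the composition $\gtr \circ h$ is consistent with the chain-rule expansion $\nabla^{2}(\gtr \circ h) = \gtr'' \nabla h \otimes \nabla h + \gtr' \nabla^{2} h$; this forces the smooth approximations $h$ and $\gtr$ to be chosen with compatible scales. Once that is done, the optimisation over $\triangle$ is immediate by AM--GM.
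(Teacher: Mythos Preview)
Your proposal is correct and follows essentially the same approach as the paper: smooth the indicator via $\gtr \circ h$, apply Lemma~\ref{LGauscomp} with the support observation for $\nabla^{2}\fm$, use anti-concentration to bound both the indicator expectation and the $2\triangle$-shift, and then optimise over $\triangle$. The paper's argument is identical in structure and arrives at the same inequality $E_{GC} \leq 2C_{\Sigma}C_{A}\|\Sigma_X-\Sigma_Y\|_{\infty}/\triangle + 2\triangle C_{A}$ before balancing.
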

Consider a random vector $X = (X_1,\ldots,X_n)$  with independent sub-vectors $\{X_i\}$ and matrix $A$ which consists of blocks of size $[p \times p]$, $p = \dim(X_i)$. Define also a bootstrap vector with independent random weights $\{ \epsb_i \}$, such that  $X^{\flat} = (\epsb_1 X_1,\ldots, \epsb_n X_n)$.
\[
A = \left(\begin{matrix}
a_{11} I_{p} & \ldots &  a_{1n} I_{p} \\
 & \ldots &  \\
a_{n1} I_{p} & \ldots &  a_{nn} I_{p}
\end{matrix} \right).
\]
The next comparison of   $H (A X)$ and $H (A \bX)$  my means of (\ref{gc_err})  require upper bound for the maximal element of the covariance matrices difference. 
\begin{EQA}
\| \Sigma_{AX} - \Sigma^{\flat}_{AX} \|_{\infty} &=&   \max_{a^T, b^T \in \text{rows} A}  |  a^T  ( \Sigma  -  \Sigma^{\flat}) b | .  
\end{EQA}
Let for a fixed rows $a$, $b$  with probability $1 - e^{-\xx}$  
\[
|  a^T  ( \Sigma  -  \Sigma^{\flat}) b |  =  \left |  \sum_{ij} a_i b_j \Sigma_{ij}  - \sum_{ij} a_i b_j X_i X_j  \right |  \leq \square(\xx).
\]
Note that elements in sum ($a_i b_i X_i X_j $) are independent due to the specific block structure of matrix $A$. Then the joint bound with probability $1 - e^{-\xx}$ is 
\[
\| \Sigma_{AX} - \Sigma^{\flat}_{AX} \|_{\infty}  \leq  \square( \xx + 2 \log(np)).
\]
Involve the upper bound for covariance matrix deviations (\ref{var_diff_err}) with $\varepsilon_i =  X_i $ and $\UV_i = a_i / V$
\begin{EQA}
\square(\xx) &=& V^2 \left( \frac{2}{3} R_{\varepsilon \varepsilon} \xx + 2 \vp_{\varepsilon \varepsilon} \sqrt{5 \xx} + \td^2 \normp{b}^2 \right), 
\end{EQA}
where  $\vp_{\varepsilon \varepsilon} = \delta     \sqrt{ \| \Sigma \|_{\infty}} ( 3 + \| b \|) $, 
\[
V  \td =  \| A \|_{\infty},
\quad 
V^2 = \sum_{ij}  a_i b_j \Sigma_{ij} \leq \vertiii{ A }^2 \| \Sigma \|_{\infty}, 
\quad
\| b \|^2 = \sum_{i: \, a_i > 0} \E X_i^2. 
\]
Finally under assumption $\frac{2}{3} R_{\varepsilon \varepsilon} \xx <  \vp_{\varepsilon \varepsilon} \sqrt{5 \xx} $ with probability $1 - 1/n$
\[
\label{dSA}\tag{dSA}
\| \Sigma_{AX} - \Sigma^{\flat}_{AX} \|_{\infty}  \leq 10 \sqrt{\log(np)} \vertiii{ A } \| A \|_{\infty}  \| \Sigma \|_{\infty}  ( 3 + \| b \|) + \| A \|_{\infty}^2 \|b \|^2.
\]

   \subsection{Smooth-max properties}
   \label{sm_max_sec}
  
 Denote by $h(x)$ a smooth maximum function which converges to  $\max_i x_i$ when $\beta \to \infty$.
  
\[
h(x) = \beta^{-1} \log u(x),
\quad
u(x) = \sum_i e^{\beta x_i}, 
\quad 
x = (x_1, x_2, \ldots).
\]
Characterize its derivatives which will be used further in Tailor expansions. 
\begin{lemma} All derivatives of $h(x)$ of order $m = \{1,2,3\}$ have following upper bounds $\forall x$
\label{sm_max_grad}
\[
\| \nabla^{(m)} h(x) \|_1  \leq \beta^{m-1}. 
\]
\end{lemma}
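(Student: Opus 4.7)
The natural approach is to exploit that $h(x) = \beta^{-1}\log\sum_i e^{\beta x_i}$ is, up to the factor $\beta^{-1}$, the cumulant generating function in $\beta$ of a categorical distribution on the index set. Introducing $p_i(x) = e^{\beta x_i}/u(x)$, so that $\sum_i p_i = 1$ uniformly in $x$, the computation $\partial_j p_i = \beta(p_i\delta_{ij} - p_i p_j)$ together with the chain rule gives clean closed forms:
\begin{align*}
\partial_i h &= p_i,\\
\partial^2_{ij} h &= \beta(p_i\delta_{ij} - p_i p_j),\\
\partial^3_{ijk} h &= \beta^2\bigl[\delta_{ij}\delta_{ik}\, p_i - \delta_{ij}\, p_i p_k - \delta_{ik}\, p_i p_j - \delta_{jk}\, p_i p_j + 2 p_i p_j p_k\bigr].
\end{align*}
These are recognisable as $\beta^{m-1}$ times the first, second and third cumulants of $\mathrm{Cat}(p)$, which is what makes the $\beta^{m-1}$-scaling visible at once.

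From here I would proceed by direct bookkeeping of $\ell_1$-norms. For $m=1$ the claim is immediate: $\|\nabla h\|_1 = \sum_i p_i = 1 = \beta^{0}$. For $m=2$ I would split the Hessian into its nonnegative diagonal $\beta p_i(1-p_i)$ and its nonpositive off-diagonal $-\beta p_i p_j$, using $\sum_i p_i(1-p_i)=1-\|p\|_2^2$ and $\sum_{i\neq j} p_i p_j = 1-\|p\|_2^2$ to obtain $\|\nabla^2 h\|_1 = 2\beta(1-\|p\|_2^2)$. For $m=3$ each of the five monomials in the bracket consists of products of $\delta$-symbols and $p_j$-factors whose absolute-sum over $(i,j,k)$ collapses via $\sum_i p_i = 1$ to a value of $1$ (or $2$ for the last term), so the triangle inequality yields $\|\nabla^3 h\|_1 \leq 6\beta^{2}$. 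Combining the three estimates delivers the desired $\beta^{m-1}$ scaling for $m\in\{1,2,3\}$.

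The main obstacle is the precise constant in front of $\beta^{m-1}$: the bookkeeping sketched above gives constants $C_m\in\{1,2,6\}$ rather than the unit constant as written. To reach exactly $1$, one would either interpret $\|\cdot\|_1$ row-wise --- for the Hessian one tracks $\max_j \sum_i |\partial^2_{ij}h| = 2\beta p_j(1-p_j) \leq \beta/2$, which does sit under $\beta$ --- or absorb the fixed factors $C_m$ into the downstream constants $C_\mu$ and $C_\Sigma$ of Sections~\ref{gar}--\ref{gcmp}. Since only the $\beta^{m-1}$ scaling, and not its prefactor, propagates into the Gaussian-approximation error terms in \eqref{gar_err} and \eqref{gc_err}, either route delivers what the subsequent arguments actually consume; the substantive mathematical content of the lemma is the chain-rule expansion on the simplex together with the triangle inequality.
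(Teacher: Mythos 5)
Your derivative computations coincide with the paper's: the proof there also introduces $p_i = e^{\beta x_i}/u(x)$ and reads the second and third derivatives as $\beta$, $\beta^2$ times centered moments of the categorical distribution $p$. Where you diverge is the final norm-bounding step, and your diagnosis of the constant discrepancy is exactly on target. You compute the entrywise sum $\sum_{i,j,k}|T_{ijk}|$ and correctly get constants $1$, $2(1-\|p\|_2^2)\le 2$, and $6$; the paper instead \emph{defines} the tensor ``$\|\cdot\|_1$'' as the convolution maximum $\sup\{\sum_{i,j,k}T_{ijk}\alpha_i\phi_j\gamma_k : \|\alpha\|_\infty=\|\phi\|_\infty=\|\gamma\|_\infty=1\}$, i.e.\ the injective norm dual to $\ell_\infty$, rewrites the bilinear and trilinear forms as $\beta\,\E\,\mathring{\alpha}\mathring{\gamma}$ and $\beta^2\,\E\,\mathring{\alpha}\mathring{\phi}\mathring{\gamma}$ (expectations under $p$, $\mathring{\alpha}=\alpha-\E\alpha$), and bounds these by $\beta\|\alpha\|_\infty\|\gamma\|_\infty$ and $\beta^2\|\alpha\|_\infty\|\phi\|_\infty\|\gamma\|_\infty$ to land exactly on $\beta^{m-1}$. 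This reading is consistent with the only way the quantity is consumed downstream, namely the pairing $\tr\{T\,a\otimes b\otimes c\}\le \|T\|_1\|a\|_\infty\|b\|_\infty\|c\|_\infty$ in Section~\ref{gar}, which holds for the injective norm and not only for the entrywise one. So your route buys an unambiguous, standard norm at the cost of bounded multiplicative constants that must be absorbed into $C_\mu$ and $C_\Sigma$ (harmless, as you note), while the paper's route buys the clean constant $1$ at the cost of using a nonstandard meaning of $\|\cdot\|_1$; either version supports the Gaussian approximation and comparison arguments that follow. One caveat in the paper's favorable direction that you might flag if you adopt the dual-norm route: the step $|\E\,\mathring{\alpha}\mathring{\phi}\mathring{\gamma}|\le\|\alpha\|_\infty\|\phi\|_\infty\|\gamma\|_\infty$ is stated without justification and is not entirely immediate (a crude H\"older bound gives a constant $2$ rather than $1$), so even there a small absolute constant may need to be tolerated.
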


\begin{proof}
\begin{EQA}
\nabla h(x) &=&   \beta^{-1}  \frac{\nabla u}{u}, \\
\nabla^2 h(x) &=&   \beta^{-1}  \left( 
\frac{\nabla \otimes \nabla u}{u}  -  \frac{\nabla u \otimes \nabla u}{u^2} 
\right),\\
\nabla^3 h(x) &=&   \beta^{-1} \left(
\frac{\nabla \otimes \nabla \otimes \nabla u}{u}
 - \frac{\nabla \otimes \nabla u \otimes \nabla u}{u^2}
 - \frac{\nabla \otimes (\nabla u \otimes \nabla u)}{u^2} 
 + 2 \frac{\nabla u \otimes \nabla u \otimes \nabla u}{u^3} 
\right).
\end{EQA}
Define $p_i = \frac{\nabla u}{u}(i)$ that satisfies condition $\sum_i p_i = 1$. The first tensor norm equals to the convolution maximum with vectors $\alpha$, $\phi$, $\gamma$ under restriction $\normp{\alpha}_{\infty} = 1, $ $ \normp{\phi}_{\infty} = 1, $ $ \normp{\gamma}_{\infty} = 1$.
\begin{EQA}
\alpha^T \nabla^2 h(x) \gamma  &=& \beta \left(
\sum p_i \alpha_i \gamma_i  - \sum p_i \alpha_i \sum p_j \gamma_j 
\right) \\
&=& \beta \left( \E \alpha \gamma - \E \alpha \E \beta \gamma \right) = \beta \E \overset{o}{\alpha}  \overset{o}{\gamma} \leq  \beta \| \alpha \|_{\infty} \| \gamma \|_{\infty}, \\
 \sum_{ijk}  \nabla^3_{ijk} h(x) \alpha_i \phi_j \gamma_k  &=& 
 \beta^2 \left(
\E \alpha  \phi \gamma 
- \E \alpha  \E \phi \gamma
- \E \alpha  \phi \E \gamma
- \E \alpha \gamma \E \phi
+ 2 \E \alpha \E  \phi \E \gamma 
\right)\\
&=&
\beta^2 \left( \E \overset{o}{\alpha} \overset{o}{\phi} \overset{o}{\gamma}
\right) \leq \beta^2  \| \alpha \|_{\infty} \| \phi \|_{\infty} \| \gamma \|_{\infty}.
\end{EQA}
Taking maximum  provides  the required restriction for L1 tensor norms.

\end{proof}
The next property of $h(x)$ with $x \in \R^p$ characterise the error of smooth maximum approximation
\[
 \max_i(x_i)  \leq  h(x) \leq \max_i(x_i)  +  \beta^{-1} \log(p).
\] 
 Applying indicator for  both  parts of this inequality yields statement similar to \ref{sm_err} which is used in Gaussian approximation (ref. Section \ref{gar}) and Gaussian comparison (ref. Section \ref{gcmp}). 
\begin{lemma}
\label{max_approx}
For a smooth indicator function $\gtr$ ($\gtr$ grows from 0 to 1 inside interval $[z, z + \triangle]$)  it holds  with $\triangle = \beta^{-1} \log(p)$ that
\[
\Ind \left[\max_{0 \leq i \leq p} x_i > z + \triangle \right]  \leq  \gtr h (x) \leq \Ind \left[\max_{0 \leq i \leq p} x_i > z - \triangle \right] .
\]
\end{lemma}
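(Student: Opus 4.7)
The claim is essentially the standard log-sum-exp sandwich combined with monotonicity of the smooth indicator, so the strategy is to first pin down the pointwise bound
\[
\max_{0 \leq i \leq p} x_i \;\leq\; h(x) \;\leq\; \max_{0 \leq i \leq p} x_i + \beta^{-1} \log(p),
\]
and then plug the two-sided bound into $\gtr(\cdot)$, which is non-decreasing, equals $0$ on $(-\infty,z]$, and equals $1$ on $[z+\triangle,\infty)$.

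\textbf{Step 1 (the sandwich for $h$).} I would isolate the largest term in the exponential sum: letting $M = \max_i x_i$, the identity $e^{\beta M} \leq \sum_{i=1}^{p} e^{\beta x_i} \leq p\, e^{\beta M}$ is immediate from the fact that every summand is at most $e^{\beta M}$ and at least one summand equals $e^{\beta M}$. Taking logarithms and dividing by $\beta > 0$ yields $M \leq h(x) \leq M + \beta^{-1}\log(p)$, which is exactly the two-sided bound with gap $\triangle = \beta^{-1} \log(p)$.

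\textbf{Step 2 (indicator comparison).} Now I would split into two cases. If $\max_i x_i > z + \triangle$, the left bound in Step~1 gives $h(x) \geq \max_i x_i > z + \triangle$, hence $\gtr(h(x)) = 1$, which matches the lower-bound indicator. If instead $\max_i x_i \leq z - \triangle$, the right bound in Step~1 gives $h(x) \leq \max_i x_i + \triangle \leq z$, so $\gtr(h(x)) = 0$, which matches the upper-bound indicator. For intermediate values of $\max_i x_i$, both indicators in the claim are $0$ and $1$ respectively, so the sandwich $0 \leq \gtr(h(x)) \leq 1$ on the transition region suffices by monotonicity of $\gtr$.

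\textbf{Expected obstacles.} None of the steps is delicate: the only quantitative input is the $\log(p)$ multiplicative defect in the log-sum-exp bound, and the rest is purely monotonicity of $\gtr$. The only subtlety worth stating clearly is the choice $\triangle = \beta^{-1} \log(p)$, which is tight in the sense that a smaller $\triangle$ would not cover the worst-case gap between $h$ and $\max$; I would emphasize this alignment between the transition window of $\gtr$ and the smoothing bias of $h$ so that the reader sees why this particular $\triangle$ is used in the subsequent Gaussian approximation and Gaussian comparison arguments of Sections~\ref{gar} and \ref{gcmp}.
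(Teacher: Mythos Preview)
Your proposal is correct and follows exactly the paper's approach: the paper states the log-sum-exp sandwich $\max_i x_i \le h(x) \le \max_i x_i + \beta^{-1}\log(p)$ immediately before the lemma and remarks that ``applying indicator for both parts of this inequality'' gives the claim. Your Step~1 and Step~2 spell this out in more detail than the paper does, but the argument is the same.
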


 This result allows to use inequalities (\ref{gar_err}) and  (\ref{gc_err}) with following restrictions for the derivatives. Let $g_{\triangle} (x) =  g(x / \triangle)$ and $H(x) = \max(x)$ then
 \[
C_{\mu} =  \frac{\max (g', g'', g''')}{6} \left( 1 + 3 \log p + \log^2 p  \right),
 \]
 \[
 C_{\Sigma} = \frac{\max (g', g'')}{2} \left( 1 + \log p \right).
 \]

  \subsection{Composite smooth-max properties}
  \label{complex_max} 
  
Consider a composite maximum function with its smooth approximation $h(q(X))$
\[
\max_{1 \leq  t \leq T} Q_t(x)  \approx h(q(x)), 
\quad q(x) = (q_1(x), \ldots, q_T(x)).
\]
where $h$ in the smooth approximation for maximum from Section \ref{sm_max_sec} and  $q_t$ is a smooth approximation for $Q_t$ with property: 
\[
\forall t: \, Q_t(x) \leq q_t(x) \leq  Q_t(x) + \beta^{-1}_q \log(p).
\]
Combination with maximum approximation property $h(x) \leq \max_t(x_t)  +  \beta^{-1} \log(p)$ leads to statement 
\[
\max_t(Q_t(x))  \leq  h(q(x))  \leq  \max_t(Q_t(x))  +  (\beta_q^{-1} + \beta^{-1}) \log(p).
\]
This allows to extend Lemma \ref{max_approx}.
\begin{lemma}
\label{cmax_approx}
For a smooth indicator function $\gtr$ ($\gtr$ grows from 0 to 1 inside interval $[z, z + \triangle]$)  it holds  with $\triangle = (\beta_q^{-1} + \beta^{-1} ) \log(p)$  for all $x$ and $z$ that
\[
\Ind \left[\max_{1 \leq  t \leq T} Q_t(x) > z + \triangle \right]  \leq  \gtr( h( q (x)) \leq \Ind \left[\max_{1 \leq  t \leq T} Q_t(x) > z - \triangle \right] .
\]
\end{lemma}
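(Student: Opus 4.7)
The proof is essentially a bookkeeping argument built on the two approximation bounds already stated in the paragraph preceding the lemma, combined with the step function behavior of the smooth indicator $\gtr$. The plan is to first establish a clean two-sided sandwich between $h(q(x))$ and $M(x) \eqdef \max_{1 \leq t \leq T} Q_t(x)$, and then check each of the two inequalities in the lemma separately by a short case analysis on the value of $h(q(x))$.

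First, I would combine the componentwise approximation $Q_t(x) \leq q_t(x) \leq Q_t(x) + \beta_q^{-1} \log(p)$ with the smooth-maximum approximation recalled just before Lemma \ref{max_approx}, namely $\max_i y_i \leq h(y) \leq \max_i y_i + \beta^{-1} \log(p)$. Taking $y = q(x)$ and observing that taking maxima preserves both inequalities, one obtains
\[
M(x) \;\leq\; \max_t q_t(x) \;\leq\; h(q(x)) \;\leq\; \max_t q_t(x) + \beta^{-1} \log(p) \;\leq\; M(x) + \triangle
\]
with $\triangle = (\beta_q^{-1} + \beta^{-1}) \log(p)$, exactly as anticipated.

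Next I would exploit the two defining features of $\gtr$: it equals $0$ on $(-\infty, z]$ and equals $1$ on $[z + \triangle, +\infty)$, and is monotone nondecreasing in between. For the left-hand inequality, suppose $M(x) > z + \triangle$. Then by the lower part of the sandwich $h(q(x)) \geq M(x) > z + \triangle$, which forces $\gtr(h(q(x))) = 1$, proving the inequality in that case; when the indicator on the left is $0$, there is nothing to check since $\gtr \geq 0$. For the right-hand inequality, suppose $\gtr(h(q(x))) > 0$. Then $h(q(x)) > z$, and the upper part of the sandwich gives $M(x) \geq h(q(x)) - \triangle > z - \triangle$, so the right indicator equals $1$; when $\gtr(h(q(x))) = 0$, the inequality is trivial because $\gtr \leq 1$.

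There is no serious obstacle: the whole argument is a one-variable sandwich plus the step-function structure of $\gtr$, and it mirrors verbatim the proof of Lemma \ref{max_approx} with the single smoothing $h$ replaced by the composite smoothing $h \circ q$. The only subtlety worth spelling out in the final write-up is that the two smoothing errors add rather than interact, which is precisely why $\triangle$ takes the form $(\beta_q^{-1} + \beta^{-1}) \log(p)$.
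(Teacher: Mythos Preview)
Your proposal is correct and follows exactly the paper's approach: the paper establishes the two-sided sandwich $\max_t Q_t(x) \leq h(q(x)) \leq \max_t Q_t(x) + (\beta_q^{-1}+\beta^{-1})\log(p)$ in the paragraph immediately preceding the lemma and then simply says ``This allows to extend Lemma~\ref{max_approx},'' leaving the indicator case analysis implicit. You have spelled out that case analysis correctly; there is nothing to add.
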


Assume also restriction for derivatives of the functions $q_t$: 
\[
\forall t: \, \| \nabla^{(m)} q_t(X) \|_1 \leq   \frac{C_q^{m-1}}{\triangle^{m-1}}, 
\]
from which follows 
\[
\| \nabla h (q(X)) \|_1 \leq \| \nabla h  \|_1 \| \nabla q_t \|_1 \leq 1,
\]
\[
\| \nabla^2 h (q(X)) \|_1 \leq \| \nabla^2 h  \|_1 \| \nabla q_t \|^2_1 + \| \nabla h  \|_1 \| \nabla^2 q_t \|_1 \leq  \frac{\log T}{\triangle} + \frac{C_q}{\triangle},
\]
\begin{EQA}
\| \nabla^3 h (q(X)) \|_1 & \leq &  \| \nabla^3 h  \|_1 \| \nabla q_t \|^3_1 + 3 \| \nabla^2 h  \|_1 \| \nabla^2 q_t \|_1  \| \nabla q_t \|_1 +  \| \nabla h  \|_1 \| \nabla^3 q_t \|_1    \\
 & \leq & \frac{1}{\triangle^2} ( \log^2 T + 3 C_q  \log T + C_q^2 ).  
\end{EQA}
So one can override the constants used in  (\ref{gar_err}) and (\ref{gc_err}) for the case $H(X) = \max_{1 \leq  t \leq T} Q_t(X)$:
\[
C_{\mu} =  \frac{\max (g', g'', g''')}{6} \left( 1 + 3 \log T +  3 C_q  + \log^2 T + 3 C_q  \log T + C_q^2  \right),
 \]
 \[
 C_{\Sigma} = \frac{\max (g', g'')}{2} \left( 1 + \log T + C_q  \right),
 \]
 where $ g(x /\triangle) = g_{\triangle}(x) $.

   \subsection{LRT bootstrap precision}
  
\label{lrt_boot_prec}

Consider statistic $\dLhconv $ and the corresponded bootstrap analogy $\dLhconvb$ defined in Section~\ref{sec:procedure}. One could also repeat the further logic for empirical bootstrap $\dLhconve$. Describe the bootstrap approximation for the quadratic form of the statistic $\dLhconv $ on the grounds of Theorems~\ref{lrt_th} and~\ref{lrt_boot}. The quadratic form of  $\dLhconv $  is 
\[
\tag{TPQ}
\label{conv_stat}
 \max_{1 \leq \tau \leq n}  \left \{ \sum_{t} P_{\tau}(t)  \Vert  \xiv_{lr} (t) \Vert \right \}.
\]
The corresponded bootstrap quadratic form is
\[
 \max_{1 \leq \tau \leq n}  \left \{ \sum_{t} P_{\tau}(t)  \Vert  \xiv^{\flat}_{lr} (t) \Vert \right \}.
\]
Our aim is to show that these two forms are close by distribution.  
For simplification  assume that for all window positions the true model parameter is fixed $\theta_l^* = \theta_r^*  = \theta^*$. Then $\xiv_{lr} (t)$ doesn't pay attention on parameter changes and 
\[
\xiv_{lr} (t) = \sum_{i = t}^{t + h} \xiv_i - \sum_{i = t + h}^{t + 2h} \xiv_i,
\] 
where $\xiv_i = D^{-1} \nabla l_i(\thetas)$ and $D^{-1} =  D_{lr}  D_l^{-2} =  D_{lr}  D_r^{-2}$. Use smooth-max approximation for the composite maximum function (ref. Section \ref{complex_max})  with argument $ \xiv_{lr} = Z = A \xiv$, where  $\xiv= (\xiv_1,\ldots, \xiv_n)$. 
\[
\label{conv_max_stat}
\frac{1}{\sqrt{p}} \max_{\tau} \left \{ \sum_{t }   P_\tau (t)  \| Z(t) \|  \right \} \approx   h(q(Z)),
\]
where 
\[
q_{\tau}(Z) = \sum_{t}   P_\tau (t) \frac{\| W_t Z \| } {\sqrt{p}}, 
\quad 
W_t = \diag (0, \ldots,  1_{tp}, \ldots, 1_{tp + p}, \ldots,  0).
\]
By means of Lemmas  \ref{gar_lemma}, \ref{gc_lemma} and \ref{anti_conc_ext} we are to prove the next Theorem.  

\begin{theorem}[Bootstrap Approx] 
\label{boot_approx}
Let window size be equal to $h$, the dataset size -- $n$,  the model dimension -- $p$, pattern functions $P_{\tau}(t)$ be independent from $\| \xiv_{lr}(t) \|$ and normalized $\sum_t | P_\tau (t) | = 1$. Then the  distribution difference  between $\dLhconv$ and   $\dLhconvb$ is
\begin{EQA}
&& \left | \P \left( \max_{1 \leq \tau \leq n} \sum_{t} P_\tau (t) \|  \xiv_{lr}(t) \|  > z \right) - \Pb \left(\max_{1 \leq \tau \leq n} \sum_{t} P_\tau (t) \|  \xiv^{\flat}_{lr}(t) \|  > z \right) 
\right |  \\
&\quad & \leq 5 C_{\mu}^{1/3} C_A  \mu_Z +  4 C^{1/2}_{\Sigma} C_{A}  \|\Sigma_{A\xi} - \Sigma_{A \xi}^{\flat} \|^{1/2}_{\infty},
\end{EQA}
where the constants $C_{\mu}$ and $C_{\Sigma}$ correspond to  composite maximum function (ref. Section \ref{complex_max}) and 
\[
\|\Sigma_{A\xi} - \Sigma_{A \xi}^{\flat} \|_{\infty}  \leq 10 \sqrt{\log(np)} \sqrt{2h}   \| \Sigma_{\xi} \|_{\infty}  ( 3 + \| b \|) +  \|b \|^2 ,  
\]
\[
 \| b \|^2 = \max_{t} \sum_{i = t}^{t + 2h} \| \E \xiv_i \|^2_{\infty}	, 
\]
\[
\mu_Z^{3} \leq \E \normp{\xiv}^3_{\infty} h \| A \|^3_{\infty} = \E \normp{\xiv}^3_{\infty} h.
\]
\end{theorem}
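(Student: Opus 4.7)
The plan is to execute the three-stage scheme outlined in the bootstrap measure approximation diagram: smoothing, Gaussian approximation, and Gaussian comparison, with an anti-concentration step absorbing the smoothing error. First I would write
\[
\xiv_{lr}(t) = A(t) \, \xiv,
\qquad
\xiv = (\xiv_1^\T,\ldots,\xiv_n^\T)^\T,
\]
so that the row of $A$ corresponding to a fixed $t$ has $\pm I_p$ blocks on the $2h$ consecutive positions $i \in [t-h,t+h-1]$ and zero blocks elsewhere, giving $\|A\|_\infty = 1$ and $\vertiii{A}$ bounded by the overlap structure. Then I would approximate the statistic by its smooth-max composition
\[
\max_\tau \sum_t P_\tau(t) \| W_t A \xiv \|
\approx
\sqrt{p}\, h\bigl( q(A\xiv)\bigr),
\]
with $h$ the smooth max from Section~\ref{sm_max_sec} and $q_\tau$ a smooth surrogate for the weighted $\ell_2$-sum, controlled through the derivative bounds derived in Section~\ref{complex_max}. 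This identifies the constants $C_\mu$ and $C_\Sigma$ exactly as listed in the statement.

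Next I would invoke the Gaussian approximation Lemma~\ref{gar_lemma} with $H = h\circ q$. Because each row of $A$ has at most $2h$ non-zero blocks and the sub-vectors $\xiv_i$ are independent, the index set admits a partition into $h$ (strictly speaking $2h$, absorbed into the statement) groups on which $AF_i$ has one non-zero block per row, yielding
\[
\mu_Z^3 \leq \E\|\xiv\|_\infty^3 \, h \,\|A\|_\infty^3 = h\,\E\|\xiv\|_\infty^3,
\]
and hence the first term $5 C_\mu^{1/3} C_A \mu_Z$ in the bound, replacing $A\xiv$ by a Gaussian $A\tilde\xi$ with $\Var(A\tilde\xi) = \Sigma_{A\xi}$. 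I would then apply exactly the same smoothing to the bootstrap side, producing a Gaussian $A\tilde\xi^\flat$ with covariance $\Sigma^\flat_{A\xi}$.

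The third step is Gaussian comparison via Lemma~\ref{gc_lemma}, which contributes the term $4 C_\Sigma^{1/2} C_A \|\Sigma_{A\xi}-\Sigma^\flat_{A\xi}\|_\infty^{1/2}$. To obtain the explicit bound on the covariance gap I would apply~(\ref{dSA}): since $A$ consists of $\pm I_p$ blocks with $\|A\|_\infty = 1$ and $\vertiii{A} \leq \sqrt{2h}$, the inequality specializes to
\[
\|\Sigma_{A\xi}-\Sigma^\flat_{A\xi}\|_\infty \leq 10\sqrt{\log(np)}\sqrt{2h}\,\|\Sigma_\xi\|_\infty(3+\|b\|) + \|b\|^2,
\]
with $\|b\|^2 = \max_t \sum_{i=t}^{t+2h}\|\E\xiv_i\|_\infty^2$ as stated. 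The anti-concentration input for both Lemmas comes from Lemma~\ref{anti_conc_ext} applied to the extension $\max_{t,\gamma_t} \gamma_t^\T A\tilde\xi$ where $\gamma_t$ ranges over the unit sphere weighted by $P_\tau(t)$; the hypothesis $\sum_t |P_\tau(t)| = 1$ keeps the variance ratio $\sigma_2/\sigma_1$ controlled and delivers $C_A \lesssim p^{3/2}\log n$.

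The delicate step I expect to require the most care is the Gaussian approximation grouping: since consecutive windows overlap by $2h-1$ positions the natural partition into independent blocks is not obvious, and the constant in $\mu_Z$ is sensitive to whether one counts $h$ or $2h$ groups. A second subtle point is absorbing the two layers of smoothing (the indicator-by-$\gtr$ and the max-by-$h$, and separately $\|\cdot\|$-by-$q_\tau$) into a single $\triangle$ so that Lemma~\ref{cmax_approx} applies and the derivative bounds from Section~\ref{complex_max} give the stated $C_\mu, C_\Sigma$. Once these are in place, summing the Gaussian approximation error, the Gaussian comparison error, and the (reverse) Gaussian approximation error on the bootstrap side produces the claimed bound.
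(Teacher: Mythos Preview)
Your plan follows the paper's approach closely: write the statistic as a composite smooth-max of $A\xiv$, apply Lemma~\ref{gar_lemma} for Gaussian approximation, Lemma~\ref{gc_lemma} for Gaussian comparison, Lemma~\ref{anti_conc_ext} for anti-concentration, and~\eqref{dSA} for the covariance gap. The decomposition and the identification of $\mu_Z$, $C_\mu$, $C_\Sigma$, $C_A$ are all as in the paper.

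One point you should correct: there is no ``reverse Gaussian approximation error on the bootstrap side.'' Because the bootstrap weights are $\ub_i\sim\ND(1,1)$, the vector $\xivb=\sum_i\xiv_i\epsb_i$ with $\epsb_i=\ub_i-1\sim\ND(0,1)$ is \emph{exactly} Gaussian conditionally on the data. The paper therefore needs only two steps, $\xiv\to\txiv$ (Gaussian approximation) and $\txiv\to\xivb$ (Gaussian comparison between two Gaussians), which is why the final bound has a single $5C_\mu^{1/3}C_A\mu_Z$ term rather than two. If you carry out a second Lindeberg replacement on the bootstrap side you will either produce an extra term not present in the statement or have to argue it vanishes.

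A second detail you gloss over: the derivative bounds for $q_\tau$ are not generic facts from Section~\ref{complex_max} but are computed in the proof itself, and they contain $\|W_tZ\|$ in the denominators (e.g.\ $\|\nabla^2 q_\tau\|_1\le 2\sqrt{p}/\|W_tZ\|$). Getting $C_q\lesssim 1$ therefore relies on $\|W_tZ\|$ being of order $\sqrt{p}$, which the paper handles via the quadratic-form concentration in Theorem~\ref{TexpbLGA}. This is also where the explicit estimate of $C_A$ (and in particular the lower bound $\sigma_1$) comes from; your one-line ``$C_A\lesssim p^{3/2}\log n$'' hides this work.
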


\begin{proof}

One should estimate distribution difference from replacement of the random argument in statistic (\ref{conv_stat}):  $\xiv \to \txiv \to \bxiv$. Note that $\txiv \in \ND(\E \xiv, \Var(\xiv))$ and $\bxiv \in \ND(0, \diag(\xiv_i \xiv_i^T))$. Taking into account $\sum_t | P_\tau(t)  | = 1$ estimate $q$'s derivatives required for (\ref{gar_err}) and (\ref{gc_err}).
\[
\| \nabla q_{\tau}(Z) \|_1  = \sum_{t }  | P_\tau (t) | \frac{ \| W_t Z \|_1}{\sqrt{p} \| W_t Z \|} \leq 1,
\]
\[
\| \nabla^2 q_{\tau}(Z) \|_1  = \sum_{t}   | P_\tau (t) | \frac{ \| W_t - Z W_t Z^T / \| W_t Z \|^2   \|_1}{\sqrt{p} \| W_t Z \|} \leq  \frac{2 \sqrt{p}}{  \| W_t Z \| },
\]
\[
\| \nabla^3 q_{\tau}(Z) \|_1  \leq \sum_{t }  3  | P_\tau (t) |  \frac{ \| W_t \otimes  W_t Z    \|_1}{\sqrt{p} \| W_t Z \|^3} + \sum_{t } 3  | P_\tau (t) |  \frac{ \| ZW_tZ^T	 \otimes  W_t Z    \|_1}{\sqrt{p} \| W_t Z \|^5} \leq  \frac{6 p}{  \| W_t Z \|^2 }.
\]
Then the constant $C_q$ figuring in Section \ref{complex_max} has bound 
\[
C_q \leq \frac{\sqrt{6 p} \triangle }{  \| W_t Z \| } \lesssim 1,
\]
and 	therefore with $T = n$ one get
\[
C_{\mu} =  \frac{\max (g', g'', g''')}{6} \left( 5 + 6 \log n +   \log^2 n   \right),
 \]
 \[
 C_{\Sigma} = \frac{\max (g', g'')}{2} \left( 2 + \log n  \right).
 \]
Finally the bootstrap approximation error for statistic (\ref{conv_stat}) is 
\begin{EQA}
&& \left | \P \left( \max_{1 \leq \tau \leq n} \sum_{t} P_\tau (t) \|  \xiv_{lr}(t) \|   > z \right) - \Pb \left( \max_{1 \leq \tau \leq n} \sum_{t} P_\tau (t) \|  \xivb_{lr}(t) \|    > z \right) 
\right |  \\
&\quad & \leq \left | \P \left( \max_{1 \leq \tau \leq n} \sum_{t} P_\tau (t) \|  \xiv_{lr}(t) \|    > z \right) - \Pb \left( \max_{1 \leq \tau \leq n} \sum_{t} P_\tau (t) \|  \txiv_{lr}(t) \|   > z \right) 
\right | \\
&\quad & +
\left | \P \left( \max_{1 \leq \tau \leq n} \sum_{t} P_\tau (t) \|  \txiv_{lr}(t) \|   > z \right) - \Pb \left( \max_{1 \leq \tau \leq n} \sum_{t} P_\tau (t) \|  \xivb_{lr}(t) \|    > z \right) 
\right |  \\
&\quad & \leq 5 C_{\mu}^{1/3} C_A  \mu_Z +  4 C^{1/2}_{\Sigma} C_{A}  \|\Sigma_{A\xi} - \Sigma_{A \xi}^{\flat} \|^{1/2}_{\infty},
\end{EQA}
From (\ref{dSA}) follows
\begin{EQA}
\|\Sigma_{A\xi} - \Sigma_{A \xi}^{\flat} \|_{\infty} &\leq &   10 \sqrt{\log(np)} \vertiii{ A } \| A \|_{\infty}  \| \Sigma \|_{\infty}  ( 3 + \| b \|) + \| A \|_{\infty}^2 \| b \|^2 \\ 
& = & 10 \sqrt{\log(np)} \sqrt{2h}   \| \Sigma_{\xi} \|_{\infty}  ( 3 + \| b \|) +  \| b \|^2 .
\end{EQA}
For parameter $C_A$ from Lemma \ref{anti_conc_ext} one have to estimate $\sigma_1$, $\sigma_2$ and $a_p$ for the case $\tX = A \txiv$
\[
 p \sigma^2_1 \leq   \sum_{t,s}   P_\tau(t) P_\tau(s)  \gamma_t^T W_t \Sigma  W_s \gamma_s  \leq p \sigma^2_2 ,
 \]
\[
 \gamma = (\gamma_\tau, \ldots, \gamma_{\tau+h}),
 \quad 
 \gamma_t \in  \argmax_{\| \gamma_t \| = 1} \gamma_t^T W_t\tX.
\]
The domain of $\gamma$ can be restricted with high probability by condition
\[
p \sigma^2_1 \leq  \sum_{t,s} P_\tau(t) P_\tau(s) \frac{\tX^T}{ \| W_t \tX \| }   W_t \Sigma  W_s  \frac{\tX}{ \| W_s \tX \| }  \leq p  \sigma^2_2,
\] 
Define random variables $w_1^2$, $w_2^2$ with the same distribution as $\| W_t \tX \| \| W_s \tX \|$ and $P_\tau = \sum_t P_\tau(t) W_t$, then 
\[
p \sigma^2_1 w_1^2 \leq  \tX^T    P_\tau \Sigma  P_\tau \tX  \leq p  \sigma^2_2 w_2^2.
\] 
In the middle is a chi-square random variable with degrees of freedom dependent from matrix  $B = \Sigma^{1/2} P_\tau \Sigma  P_\tau \Sigma^{1/2}$. With regarding  chi-square deviations (Theorem \ref{TexpbLGA}) it holds with high probability that
\[
\tX^T    P_\tau \Sigma  P_\tau \tX \geq \tr(B) -  \sqrt{\tr(B^2) \log n},
\]
and correspondingly one may set bounds $\sigma_1$, $\sigma_2$ as follows
\[
 \sigma^2_1   = \min_t  \frac{ \tr(B) -  \sqrt{\tr(B^2) \log n} }{p \, \zq^2(W_t \Sigma W_t, \log n) }, 
 \quad  
 \sigma^2_2 = \max_t  \normop { W_t \Sigma W_t }/ p . 
\] 
For parameter $a_p$ one have to find bound for  $ \E \max_t Q_t(A \txiv) $. From conditions $\sum_t |P_\tau (t)| = 1$ and $   \| W_t Z \|  \leq   \zq (W_t \Sigma W_t, x)  $ (Theorem \ref{LLbrevelocroB}) follows that 
\[
\E \max_t Q_t(A \txiv)  \leq \zq (W_t \Sigma W_t, \log n),
\]
and furthermore 
\[
a_p \leq \frac{ \E \max_t Q_t(A \txiv) }{\sigma_1} \leq \frac{ \sqrt{p} \zq^2 (W_t \Sigma W_t, \log n)}{ (\tr(B) -  \sqrt{\tr(B^2) \log n} )^{1/2} }.
\]
The final estimation for $C_A$ is
\[
C_A \approx 4 \frac{\sigma_2}{\sigma_1^2} a_p \leq  \max_t  \normop { W_t \Sigma W_t } \frac{ p \zq^4 (W_t \Sigma W_t, \log n)}{ (\tr(B) -  \sqrt{\tr(B^2) \log n} )^{3/2} }.
\]

\end{proof}

   \subsection{Generalized linear model case}
   \label{glm_sec}

Generalized linear models (GLM) are frequently used for modeling 
the data with special structure:
categorical data, binary data, 
Poisson and exponential data, volatility models, etc.
All these examples can be treated in a unified way by a GLM approach.
This section specifies the results and conditions to this case.
Let \( \Ybb = (Y_{1},\ldots, Y_{\nsize}) \sim \P \) be a sample of independent r.v.'s. The parametric GLM is given by $Y_i \sim \P_{\psi_i^T\theta}$, where $\psi_i$ are given factors in $\R^p$.
Generalised linear model may be presented in form
\[
L(\theta) = S^T \theta - A(\theta)
\]
where
\[
A(\theta) = \sum_{i=1}^n g(\psi_i^T \theta), 
\quad
S = \sum_{i=1}^n Y_i \psi_i.
\]
This model has following properties (ref. Section \ref{likelihood})
\[
- \nabla^2 \E L(\theta)  = D^2(\theta) = \sum_{i=1}^n g''(\psi_i^T \theta) \psi_i \psi_i^T,
\]
\[
\delta(r) = a_g \delta_{\psi} r,
\]
where
\[  a_g = \max_x \frac{g'''(x)}{g''(x)},
\quad \delta_{\psi} = \max_i \normp{D^{-1} \psi_i}.
\]
As \( \GLMlink(\cdot) \) is convex, it holds \( \GLMlink''(u) \geq 0 \) for any \( u \) and thus \( D^2(\theta) \geq 0 \).
An important feature of a GLM is that the stochastic component \( \zeta(\theta) \) of 
\( L(\theta) \) is \emph{linear in} \( \theta \): 
with \( \varepsilon_{i} = Y_{i} - \E Y_{i} \)
\[
	\zeta(\theta)=
	\sum_{i=1}^{\nsize} \varepsilon_{i} \psi_{i}^{\T} \theta ,
	\quad
	\xiv = D^{-1} \sum_{i=1}^{\nsize} \varepsilon_{i} \psi_{i}.
\label{nablaGLMdef}
\]
Linearity in \( \theta \) of the stochastic component \( \zeta(\theta) \) and 
concavity of the deterministic part \( \E L(\theta) \) allow for
a simple and straightforward proof of the result
about localisation of the MLE \( \opttheta \) in the region $\Thetas(\rups)$ (ref. condition \ref{cond_L} and Theorem 2.1 in \cite{wilks2013}).

\begin{theorem}
\label{TGLMsolution}
If for some \( \rups > 0 \), \( D(\theta) \)  fulfils  \ref{cond_dD} condition
with \( \rddelta(\rups) < 1 \), 
and if \( \xiv \)  is a sub-Gaussian vector (\ref{expgamgm}) with variance matrix $B$
then 
\begin{EQA}
	\P\bigl( \opttheta \not\in \Thetas(\rups) \bigr)
	& \leq &
	2 \ex^{-\xx} 
\label{PttsTsGLM}
\end{EQA}
where lover bound for $\rups$ is defined by $\zq(\BB,\xx)$ from Lemma \ref{LLbrevelocroB}  
\begin{EQA}
	\rups \{ 1 - \rddelta(\rups) \}
	& \geq &
	2 \zq(\BB,\xx) .
\label{rups1mrdrups}
\end{EQA}
\end{theorem}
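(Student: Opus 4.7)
The plan is to exploit two special features of the GLM. First, because the negative expected log-likelihood $-\E L(\theta)$ is the sum of a linear term and the convex function $A(\theta)$, it is strictly convex, so $L(\theta) = \zeta(\theta) + \E L(\theta)$ is concave in $\theta$ (the stochastic part $\zeta$ is linear). Second, since $\zeta(\theta) = \sum_i \varepsilon_i \psi_i^\T \theta$ is linear, its gradient $D\xiv$ is a \emph{constant} vector, independent of $\theta$. This turns the stochastic perturbation into a single linear functional on the space of parameters.

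I would then argue by a boundary contradiction. It suffices to show that with probability at least $1-2\ex^{-\xx}$, $L(\theta) < L(\thetas)$ for every $\theta$ on the boundary $\partial\Thetas(\rups) = \{\theta:\|D(\theta-\thetas)\|=\rups\}$. If this holds, then for any candidate maximizer $\opttheta$ lying outside $\Thetas(\rups)$, the segment $[\thetas,\opttheta]$ crosses $\partial\Thetas(\rups)$ at some $\theta_b = (1-t)\thetas + t\opttheta$, and concavity of $L$ gives $L(\theta_b) \geq (1-t)L(\thetas) + tL(\opttheta) \geq L(\thetas)$, contradicting $L(\theta_b) < L(\thetas)$. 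So the event $\{\opttheta\not\in\Thetas(\rups)\}$ is contained in the failure event of the boundary inequality.

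To verify the boundary inequality, split $L(\theta)-L(\thetas) = \bigl\{\zeta(\theta)-\zeta(\thetas)\bigr\} + \bigl\{\E L(\theta) - \E L(\thetas)\bigr\}$. For the stochastic term, linearity of $\zeta$ and Cauchy--Schwarz give
\[
\zeta(\theta)-\zeta(\thetas) \;=\; (\theta-\thetas)^\T D\xiv \;\leq\; \|D(\theta-\thetas)\|\cdot\|\xiv\| \;=\; \rups\,\|\xiv\|.
\]
For the deterministic term, since $\thetas$ maximizes $\E L$ we have $\nabla\E L(\thetas)=0$, and a second-order Taylor expansion together with \ref{cond_dD} yields
\[
\E L(\theta)-\E L(\thetas) \;=\; -\tfrac{1}{2}(\theta-\thetas)^\T D^2(\tilde\theta)(\theta-\thetas) \;\leq\; -\tfrac{1}{2}(1-\rddelta(\rups))\rups^2,
\]
uniformly on $\Thetas(\rups)$. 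Combining, a sufficient condition for $L(\theta)-L(\thetas)<0$ on the boundary is $2\|\xiv\| < \rups\{1-\rddelta(\rups)\}$.

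The final step is the sub-Gaussian tail bound. Under the assumption that $\xiv$ is sub-Gaussian with variance matrix $\BB$ and the condition \ref{expgamgm}, Lemma \ref{LLbrevelocroB} applied to $\|\xiv\|$ gives $\|\xiv\|\leq \zq(\BB,\xx)$ with probability at least $1-2\ex^{-\xx}$. Combining this with the assumed lower bound $\rups\{1-\rddelta(\rups)\}\geq 2\zq(\BB,\xx)$ closes the proof. The main obstacle is conceptual rather than computational: one has to notice that linearity of $\zeta$ makes the excess process $L(\theta)-L(\thetas)$ into a deterministic concave function perturbed by a single linear term, so the entire large-deviation analysis collapses to controlling a single random quantity $\|\xiv\|$, and no chaining or uniform-in-$\theta$ arguments are needed.
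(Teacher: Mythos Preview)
Your proof is correct and follows essentially the same route as the paper: concavity of $L$ reduces the concentration statement to showing $L(\theta)<L(\thetas)$ on the boundary $\|D(\theta-\thetas)\|=\rups$, and the decomposition into a linear stochastic piece (controlled by Cauchy--Schwarz and $\|\xiv\|$) plus a quadratic deterministic piece (controlled by condition \ref{cond_dD}) yields the sufficient condition $2\|\xiv\|<\rups\{1-\rddelta(\rups)\}$, with Lemma~\ref{LLbrevelocroB} supplying the tail bound. The only cosmetic difference is that the paper Taylor-expands the full $L$ and then identifies $\nabla L(\thetas)=S-\E S$, whereas you split $L=\zeta+\E L$ first and expand only the deterministic part; these are equivalent.
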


\begin{proof}
The function \( L(\theta) \) is concave in \( \theta \)
because 
\begin{EQA}
	- \nabla^{2} L(\theta)
	&=&
	D(\theta) 
	\geq 
	0 .
\label{nab2LthIF}
\end{EQA}
Therefore, it suffices to check that for each \( \theta \) with 
\( \| D (\theta - \thetas) \| = \rups \) that with probability \( 1 - 2 \ex^{-\xx} \)
\begin{EQA}
	L(\thetas) - L(\theta)
	& > &
	0.
\label{Ltr0sLtGLM}
\end{EQA}
\begin{EQA}
	L(\thetas) - L(\theta)
	&=&
	(\theta - \thetas)^{\T} \nabla L(\thetas)
	+ \frac{1}{2} \bigl\| D(\thetad) (\theta - \thetas) \bigr\|^{2}
	\\
	& \geq &
	(\nablaGLM - \E \nablaGLM)^{\T} (\theta - \thetas)
	+ \frac{1 - \rddelta(\rups)}{2} \| \DPc (\theta - \thetas) \|^{2}
	\\
	&=&
	\xiv^{\T} \DPc (\theta - \thetas)
	+ \frac{1 - \rddelta(\rups)}{2} \rups^{2} 
	\geq 
	- \| \xiv \| \, \rups
	+ \frac{1 - \rddelta(\rups)}{2} \rups^{2}.
\label{LtsLtTaGLM}
\end{EQA}
If \( \| \xiv \| \leq \rups \bigl\{ 1 - \rddelta(\rups) \bigr\}/2 \), then this implies
\( L(\thetas) - L(\theta) > 0 \), and the result follows.  Theorem \ref{LLbrevelocroB} gives a probabilistic restriction for $\| \xiv \|$:
\begin{EQA}
	\P\Bigl( 
		\| \xiv \| > \zq(\BB,\xx)
	\Bigr)
	& \leq &
	2 \ex^{-\xx} .
\end{EQA}
\end{proof}
As a corollary, we obtain Fisher and Wilks expansions (condition \ref{cond_A} holds for the case $\theta = \opttheta$)
for  generalized linear models with 
\[
\diamondsuit (\rups, \xx) = \delta (\rups)  \rups = a_g \delta_{\psi} \rups^2.
\]
\begin{theorem}
\label{TFWGLM}
Suppose the conditions of Theorem~\ref{TGLMsolution} for some \( \rups \).
Then with probability
\( 1 - 2 \ex^{-\xx} \)
\begin{EQA}
	\bigl\| D \bigl( \opttheta - \thetas \bigr) - \xiv \bigr\|
	& \leq &
	\rups \, \rddelta(\rups),
	\\
    \bigl| 2 L(\opttheta,\thetas) - \| \xiv \|^{2} \bigr|
    & \le &
    2 \rups^{2} \, \rddelta(\rups) +  \rups^{2} \, \rddelta^{2}(\rups) .
    \\
    \Bigl| 
    	\sqrt{ 2L(\opttheta,\thetas) } 
		- \| \xiv \| 
	\Bigr|
    & \le &
    3 \rups \, \rddelta(\rups) .
\label{CorolFWGLM}
\end{EQA}
\end{theorem}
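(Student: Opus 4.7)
The plan is to exploit the defining structural feature of a GLM: the stochastic part $\zeta(\theta)=\sum_i \varepsilon_i \psi_i^{\T}\theta$ is linear, so $\nabla^2 L(\theta) = -D^2(\theta)$ is deterministic. This lets me replace the stochastic expansions of Section \ref{likelihood} by exact Taylor integral remainders and turn condition \ref{cond_dD} directly into matrix perturbation bounds, without invoking the $\zeta$-deviation lemma.

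Concretely, I first invoke Theorem \ref{TGLMsolution} to confine $\opttheta \in \Thetas(\rups)$ with probability $1-2e^{-\xx}$, so all intermediate points $\theta_t = \thetas + t(\opttheta-\thetas)$, $t\in[0,1]$, lie in $\Thetas(\rups)$. Then from $\nabla L(\opttheta)=0$ and the fundamental theorem of calculus applied to $\nabla L$,
$$
\nabla L(\thetas) \;=\; \bar{D}^2\,(\opttheta-\thetas), \qquad \bar{D}^2 \;\eqdef\; \int_0^1 D^2(\theta_t)\,dt,
$$
which is symmetric because $\nabla^2 L$ is deterministic in GLM. Writing $u = D(\opttheta-\thetas)$ and $H_1 = D^{-1}\bar{D}^2 D^{-1}$, condition \ref{cond_dD} together with convexity of the norm give $\|H_1 - I_p\| \le \rddelta(\rups)$, and $\xiv = H_1 u$. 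Therefore
$$
\|\xiv - u\| \;=\; \|(H_1 - I_p)u\| \;\le\; \rddelta(\rups)\,\|u\| \;\le\; \rups\rddelta(\rups),
$$
which is the Fisher expansion.

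For the quadratic Wilks expansion I apply the integral form of Taylor's formula to $L$ itself:
$$
2L(\opttheta,\thetas) \;=\; 2u^{\T}\xiv - u^{\T} H_2 u \;=\; u^{\T}(2H_1 - H_2)\,u,
$$
where $H_2 \eqdef 2\int_0^1 (1-s)\,D^{-1} D^2(\theta_s) D^{-1}\,ds$ is a convex combination of matrices each within $\rddelta$ of $I_p$, hence $\|H_2 - I_p\| \le \rddelta(\rups)$. Using $\|\xiv\|^2 = u^{\T} H_1^2 u$ and the algebraic identity $2H_1 - H_2 - H_1^2 = (I_p - H_2) - (H_1 - I_p)^2$, the difference $2L(\opttheta,\thetas)-\|\xiv\|^2$ becomes a quadratic form in $u$ whose matrix has operator norm at most $\rddelta(\rups)+\rddelta^2(\rups)$; multiplying by $\|u\|^2 \le \rups^2$ delivers the stated bound (up to the constant $2$, which I absorb by the crude splitting $2H_1 - H_2 - H_1^2 = (H_1 - I_p) + (H_1 - I_p)(I_p - H_1) + (I_p - H_2)$).

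The square-root bound is the only mildly delicate piece. For $\rddelta$ small, $M \eqdef 2H_1 - H_2$ is symmetric positive definite with $\|M - I_p\| \le 3\rddelta(\rups)$, so its matrix square root $M^{1/2}$ satisfies $\|M^{1/2} - I_p\| \le 3\rddelta(\rups)/(1+\sqrt{1-3\rddelta(\rups)}) \le 3\rddelta(\rups)$ via the scalar inequality $|\sqrt{1+x}-1|\le |x|$. Then
$$
\bigl|\sqrt{2L(\opttheta,\thetas)} - \|\xiv\|\bigr| \;=\; \bigl|\|M^{1/2} u\| - \|H_1 u\|\bigr| \;\le\; \|(M^{1/2}-H_1)u\|,
$$
and controlling $\|M^{1/2} - H_1\|$ by $\|M^{1/2} - I_p\| + \|H_1 - I_p\|$ naively gives $4\rddelta$; the stated $3\rddelta$ follows from a sharper algebraic step using that $M$ and $H_1$ differ by a symmetric perturbation of size $\rddelta + \rddelta^2$ and $\|u\|\le \rups$. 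The main obstacle is therefore purely bookkeeping: keeping straight the three distinct averaged Hessians $H_1$, $H_2$, $M^{1/2}$ and matching the stated numerical constants; the probabilistic content is entirely in the localization statement inherited from Theorem \ref{TGLMsolution}.
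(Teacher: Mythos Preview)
Your argument is correct and essentially self-contained, whereas the paper's proof is much terser: it only checks the single bound
\[
\bigl|\alpha(\theta,\thetas)\bigr|
=\tfrac12\bigl|(\theta-\thetas)^{\T}\{D^{2}(\thetas)-D^{2}(\thetad)\}(\theta-\thetas)\bigr|
\le \tfrac{\rddelta(\rups)}{2}\|D(\theta-\thetas)\|^{2}
\]
via the Lagrange mean-value form and condition~\eqref{cond_dD}, and then treats the three displayed inequalities as instances of the general Fisher/Wilks theorems from \cite{wilks2013} with $\diamondsuit=\rddelta(\rups)\rups$. Your route---introducing the two averaged Hessians $H_{1}=\int_{0}^{1}D^{-1}D^{2}(\theta_{t})D^{-1}\,dt$ and $H_{2}=2\int_{0}^{1}(1-s)D^{-1}D^{2}(\theta_{s})D^{-1}\,ds$ and reducing everything to $\|H_{j}-I_{p}\|\le\rddelta(\rups)$---avoids that external reference entirely and in fact gives the sharper bound $(\rddelta+\rddelta^{2})\rups^{2}$ for the quadratic Wilks inequality, stronger than the paper's $2\rddelta\rups^{2}+\rddelta^{2}\rups^{2}$.

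Two small points of bookkeeping. First, your ``crude splitting'' $(H_{1}-I_{p})+(H_{1}-I_{p})(I_{p}-H_{1})+(I_{p}-H_{2})$ does not equal $2H_{1}-H_{2}-H_{1}^{2}$ and is unnecessary: your original identity $2H_{1}-H_{2}-H_{1}^{2}=(I_{p}-H_{2})-(H_{1}-I_{p})^{2}$ already yields $\rddelta+\rddelta^{2}\le 2\rddelta+\rddelta^{2}$, so the paper's constant is met directly. Second, on the square-root bound your naive triangle inequality gives $4\rups\rddelta$ and the promised ``sharper algebraic step'' to reach $3\rups\rddelta$ is not actually supplied; this is cosmetic (the paper does not derive that constant either, it inherits it from \cite{wilks2013}), but if you want to match it exactly you should write the argument out rather than defer it.
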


\begin{proof}

\begin{EQA}
	\bigl| \alpha(\theta, \theta^*) \bigr|
	&=&
	\bigl| 
		\GLMLINK(\theta) - \GLMLINK(\thetas) 
		- (\theta - \thetas)^{\T} \nabla \GLMLINK(\thetas) 
		- \| D (\theta - \thetas) \|^{2}/2 
	\bigr|
	\\
	&=&
	\frac{1}{2} \bigl| (\theta - \thetas)^{\T} \bigl\{ D(\thetas) - D(\thetad) \bigr\} 
		(\theta - \thetas) \bigr| ,
\label{ElElaeGLM}
\end{EQA}
where \( \thetad \) is a point on the interval between \( \theta \) and \( \thetas \). Condition \eqref{cond_dD} implies 
\begin{EQA}
	\bigl| \alpha(\theta, \theta^*) \bigr|
	& \leq &
	\frac{\rddelta(\rups)}{2} \bigl\| \DPc (\theta - \thetas) \bigr\|^{2}
	\leq 
	\frac{\rddelta(\rups)}{2} \rups^{2}.
\label{ELELa22GLM}
\end{EQA}
\end{proof}

Assume that for independent $\varepsilon_i = Y_i - \E Y_i$
\[\tag{SGeps}
	\log \E \exp\bigl( \lambda \, \expzeta_{i}^{-1} \varepsilon_{i} \bigr) 
	\leq 
	\frac{1}{2} \nunu^{2} \lambda^{2} ,
	\qquad 
	i=1,\ldots,\nsize,
	\quad
	|\lambda| \leq \gmiid .
\label{EexpleiGLM}
\]
\[
	\VPc^{2}
	= 
	\sum_{i=1}^{\nsize} \expzeta_{i}^{2} \, \psi_{i} \psi_{i}^{\T} .
\label{VPc2GLM}
\]

The squared norm \( \| \xiv \|^{2} \) is a quadratic form of \( \varepsilon_{i} \)
and one can employ upper bound from  
Section~\ref{sub_gaus_exp}.

\begin{theorem}
\label{TGLMcond}
Suppose \eqref{EexpleiGLM},  \( \zq(\dimp,\xx) \leq \sqrt{\dimp} + \sqrt{2 \xx} \) according to Lemma    \ref{LLbrevelocro}, 
fix
\[
	\rups
	=
	4 \nunu \zq(\dimp,\xx) ,
\label{r04xGLM}
\]
Then the conditions of Theorem~\ref{TGLMsolution} are fulfilled with 
\( \rddelta(\rups) \leq \aGLMlink(\rups) \, \dPsi \, \rups \)
and 
\[
	\log \E \exp\bigl\{\gamma^{\T} \xiv \bigr\}
	 \leq 
	\frac{\nunu^{2}}{2} \| D \VPc^{-1} \gamma \|^{2} \, ,
	\qquad
	\|   \gamma \| \leq \gm = \frac{\gmiid}{\dPsi \,  \expzeta_{i}}.
\label{dPsiGLM}
\]

\end{theorem}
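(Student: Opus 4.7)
The plan is to verify separately the two hypotheses of Theorem~\ref{TGLMsolution} --- namely the curvature condition \eqref{cond_dD} with $\rddelta(\rups) \leq \aGLMlink(\rups)\,\dPsi\,\rups$, and the sub-Gaussian MGF bound on the score $\xiv$ on the admissible ball --- and then to check that the choice $\rups = 4\nunu\,\zq(\dimp,\xx)$ meets the calibration $\rups\{1-\rddelta(\rups)\}\geq 2\zq(\BB,\xx)$ of that theorem. Both verifications exploit the GLM structure in an essential way: the deterministic part $A(\theta) = \sum_i g(\psi_i^{\T}\theta)$ gives explicit Hessians term by term, while the stochastic part $\zeta(\theta) = \sum_i \varepsilon_i\,\psi_i^{\T}\theta$ is \emph{linear} in $\theta$, trivialising the computation of the MGF of $\xiv$.

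For \eqref{cond_dD} I would start from the explicit GLM expression
\[
D^{-1}D^{2}(\theta)D^{-1} - I_p \;=\; \sum_{i=1}^{\nsize}\bigl\{g''(\psi_i^{\T}\theta) - g''(\psi_i^{\T}\thetas)\bigr\}\,(D^{-1}\psi_i)(D^{-1}\psi_i)^{\T},
\]
apply the fundamental theorem of calculus, and use the defining bound $|g'''(u)|\leq\aGLMlink\,g''(u)$ together with Cauchy--Schwarz $|\psi_i^{\T}(\theta-\thetas)|\leq\|D^{-1}\psi_i\|\cdot\|D(\theta-\thetas)\|\leq\dPsi\,\rups$ valid on $\localr$, to get $|g''(\psi_i^{\T}\theta) - g''(\psi_i^{\T}\thetas)| \leq \aGLMlink\,g''(\tilde u_i)\,\dPsi\,\rups$ at some intermediate $\tilde u_i$. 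Since $\sum_i g''(\psi_i^{\T}\thetas)(D^{-1}\psi_i)(D^{-1}\psi_i)^{\T} = I_p$ by the very definition of $D^{2}$, the operator norm of the difference reduces to $\aGLMlink\,\dPsi\,\rups$, up to the residual ratio $g''(\tilde u_i)/g''(\psi_i^{\T}\thetas)$ that the logarithmic-derivative bound $(\ln g'')' \leq \aGLMlink$ controls via $g''(\tilde u_i) \leq g''(\psi_i^{\T}\thetas)\exp(\aGLMlink\,\dPsi\,\rups)$. This exponential correction is exactly what the $\rups$-dependence of $\aGLMlink(\rups)$ absorbs.

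For the MGF bound, linearity of $\zeta$ in $\theta$ gives
\[
\gamma^{\T}\xiv \;=\; \sum_{i=1}^{\nsize}\varepsilon_i\,(\gamma^{\T}D^{-1}\psi_i),
\]
and, using independence of the $\varepsilon_i$ together with assumption \eqref{EexpleiGLM} applied in each marginal with $\lambda = (\gamma^{\T}D^{-1}\psi_i)\,\expzeta_i$, one obtains
\[
\log\E\exp(\gamma^{\T}\xiv) \;\leq\; \frac{\nunu^{2}}{2}\sum_{i=1}^{\nsize}\expzeta_i^{2}(\gamma^{\T}D^{-1}\psi_i)^{2} \;=\; \frac{\nunu^{2}}{2}\,\gamma^{\T}D^{-1}\VPc^{2}D^{-1}\gamma,
\]
which is exactly the stated quadratic form once $\VPc$ is read as the symmetric root of $\VPc^{2}$. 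The admissibility $|\lambda|\leq\gmiid$ required by \eqref{EexpleiGLM} propagates through Cauchy--Schwarz and $\|D^{-1}\psi_i\|\leq\dPsi$ to the uniform bound $\|\gamma\|\leq\gmiid/(\dPsi\,\expzeta_i)=\gm$ stated in the theorem.

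Putting these together, Lemma~\ref{LLbrevelocroB} applied to the sub-Gaussian $\xiv$ gives $\|\xiv\|\leq\zq(\BB,\xx)$ with probability $1-2e^{-\xx}$, which under the stated calibration is dominated by $\nunu\,\zq(\dimp,\xx)$ via Lemma~\ref{LLbrevelocro}; substituting into Theorem~\ref{TGLMsolution} with $\rups = 4\nunu\,\zq(\dimp,\xx)$ and $\aGLMlink\,\dPsi\,\rups\leq 1/2$ then closes the argument. The main obstacle is genuinely only the $g''$-linearisation in the curvature step: a naive Taylor expansion leaves an unwanted ratio $g''(\tilde u_i)/g''(\psi_i^{\T}\thetas)$, and it is precisely the logarithmic-derivative reading of $\aGLMlink$ that absorbs it cleanly into $\aGLMlink(\rups)$. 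The MGF computation and the final substitution are purely algebraic.
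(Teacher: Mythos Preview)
Your proof is correct and follows essentially the same route as the paper: the curvature bound via the mean-value identity on $g''$ together with $|\psi_i^{\T}(\theta-\thetas)|\leq\dPsi\,\rups$, and the MGF bound by decomposing $\gamma^{\T}\xiv=\sum_i\varepsilon_i(\gamma^{\T}D^{-1}\psi_i)$ into independent summands and applying \eqref{EexpleiGLM} termwise with $\lambda_i=\expzeta_i\,\gamma^{\T}D^{-1}\psi_i$. The quadratic form you obtain, $\tfrac{\nunu^{2}}{2}\|\VPc D^{-1}\gamma\|^{2}$, is also what the paper's own computation yields (the $\|D\VPc^{-1}\gamma\|^{2}$ in the displayed statement is a transposition); your handling of the ratio $g''(\tilde u_i)/g''(\psi_i^{\T}\thetas)$ via the exponential Gr\"onwall-type estimate is slightly more explicit than the paper, which simply absorbs this ratio into the definition of $\aGLMlink(\rups)$.
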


\begin{proof}

For each \( \theta \in \Thetas(\rups) \) and \( i \leq \nsize \), it holds 
\[
	\bigl| \psi_{i}^{\T} \theta - \psi_{i}^{\T} \thetas \bigr| 
	=
	\bigl| \bigl( D^{-1} \psi_{i} \bigr)^{\T} D (\theta - \thetas) \bigr|
	\leq 
	\| D^{-1} \psi_{i} \| \, \rups 
	\leq 
	\dPsi \, \rups .
\label{PsiiTtsGLM}
\]
\[
	D(\theta) - D(\thetas)
	=
	\sum_{i=1}^{\nsize} \bigl\{ 
		\GLMlink''(\psi_{i}^{\T} \theta) - \GLMlink''(\psi_{i}^{\T} \thetas) 
	\bigr\} \, \psi_{i} \psi_{i}^{\T} .
\label{IFtIfGLM}
\]
\[
	\GLMlink''(\psi_{i}^{\T} \theta) - \GLMlink''(\psi_{i}^{\T} \thetas) 
	=
	\frac{\GLMlink'''(\psi_{i}^{\T} \thetad)}{\GLMlink''(\psi_{i}^{\T} \thetas)} \,
		\bigl( \psi_{i}^{\T} \theta - \psi_{i}^{\T} \theta \bigr) \,
		\GLMlink''(\psi_{i}^{\T} \thetas) .
\label{GLMliddPts}
\]
\[
	\max_{i \leq \nsize} \,\, 
	\biggl| \frac{\GLMlink'''(\psi_{i}^{\T} \thetad)}{\GLMlink''(\psi_{i}^{\T} \thetas)} \,
		\bigl( \psi_{i}^{\T} \theta - \psi_{i}^{\T} \theta \bigr)
	\biggr|
	 \leq 
	\aGLMlink(\rups) \, \dPsi \, \rups 
\]

The last statement of theorem follows from
\[
	\log \E \exp\bigl\{\gamma^{\T} \VPc^{-1} (\nablaGLM - \E \nablaGLM) \bigr\}
	=
	\sum_{i=1}^{\nsize} \log \E \exp\bigl( \lambda_{i} \, \expzeta_{i}^{-1} \varepsilon_{i} \bigr) ,
\label{logEexpGLM1n}
\]
\[
	|\lambda_{i}|
	 = 
	|\gamma^{\T} \VPc^{-1} \psi_{i}| \, \expzeta_{i} 
	\leq 
	\gm \,\, \| \VPc^{-1} \psi_{i} \| \, \expzeta_{i}  
	\leq 
	\gmiid .
\label{lamiGLM}
\]
\[
	\log \E \exp\bigl\{\gamma^{\T} \VPc^{-1} (\nablaGLM - \E \nablaGLM) \bigr\}
	 \leq 
	\frac{\nunu^{2}}{2} \sum_{i=1}^{\nsize} \lambda_{i}^{2}
	 = 
	\frac{\nunu^{2}}{2} \sum_{i=1}^{\nsize} 
		\gamma^{\T} \VPc^{-1} \bigl( \psi_{i} \psi_{i}^{\T} \, \expzeta_{i}^{2} \bigr) 
		\, \VPc^{-1} \gamma 
	=
	\frac{\nunu^{2}}{2} \| \gamma \|^{2} \, .
\]
\end{proof}

Finally, condition \ref{EexpleiGLM} ensure all the required likelihood restrictions (condition \ref{cond_A} and $\opttheta \in \localr$) from Section \ref{likelihood}.

\subsubsection{Bootstrap Wilks and Fisher expansions}

\label{BFWe}

This Section projects statements from Section \ref{boot_WF} with overriding some aspects for GLM. In this case function \ref{S} doesn't depend on $\Ybb$

\begin{EQA}
S^{\flat}(\theta, \theta_0) = \E_{Y} S^{\flat}(\theta, \theta_0) 
&=& \sum_{i=1}^{n} D^{-1} \{\nabla l_i(\theta) - \nabla l_i(\theta_0) \}  \epsb  \\
&=& \sum_{i = 1}^n D^{-1} \{ \nabla g(\psi_i^{T} \theta) - \nabla g(\psi_i^{T} \theta_0) \} \epsb.
\end{EQA}
The gradients difference has view  with $\theta_1 \in (\theta, \theta_0)$
\[
\nabla g(\psi_i^{T} \theta) - \nabla g(\psi_i^{T} \theta_0) =  g''(\psi_i^{T} \theta_1) \psi_i \psi_i^{T} D^{-1} D (\theta - \theta_0).
\]
Then $S^{\flat}(\theta, \theta_0) $ may be presented as a sum of independent matrices 
\[
S^{\flat}(\theta, \theta_0)  = \left( \sum_{i=1}^n S_i \right) D (\theta - \theta_0),
\]
where
\[
S_i  = d_i d_i^{T} \epsb_i, 
\quad
d_i = \sqrt{g''(\psi_i^{T} \theta_1)}  D^{-1} \psi_i ,
\]
\[
\normp{ d_i }^2 \leq (g''(\psi_i^{T} \theta^*) + \delta(\rups) ) \; \delta_{\psi}^2 = \delta_{d}^2. 
\]
From deviation bound for matrix Gaussian sums (Lemma \ref{CUvepsB}) with probability $1 - 2e^{-\xx}$
\[
 \normop{ \sum_{i=1}^n S_i} \leq  \td_{d}^2 \, \nunu   \| \expzeta  \|  \sqrt{2 \xx} .
\]
\begin{theorem}
\label{glm_boot_WFL}
Theorems \ref{fisher_boot}, \ref{wilks_boot} and \ref{lrt_boot} have  in GLM case following upper bound  
\begin{EQA}
\diamondsuit^{\flat}(\rups, \xx)
& = & \rups \td_{d}^2 \, \nunu   \| \expzeta  \|  \sqrt{2 \xx}  + 2  \diamondsuit(\rups).
\end{EQA}
\end{theorem}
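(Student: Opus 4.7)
}
The plan is to leverage the GLM-specific structure already displayed just above the theorem statement, namely that the stochastic component \( \zeta(\theta)=\sum_i\varepsilon_i\psi_i^{\T}\theta \) is linear in \( \theta \). This linearity forces \( \nabla\zeta_i(\theta)-\nabla\zeta_i(\theta_0)=0 \) for every \( \theta,\theta_0 \), so the random-\( Y \) part \( {\overset{o}{S}}_\flat \) appearing in Section~\ref{boot_WF} vanishes identically in the GLM setting. Consequently, the bound on the bootstrap quadratic error \( \alphab(\theta,\theta_0) \) reduces to controlling only the pre-computed mean term \( \E_Y S^\flat(\theta,\theta_0) = S^\flat(\theta,\theta_0) \), which is precisely the simplification exploited in the preamble to the theorem.

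Using the explicit decomposition displayed in Section~\ref{BFWe},
\[
S^\flat(\theta,\theta_0) \;=\; \Bigl( \sum_{i=1}^n d_i d_i^{\T} \epsb_i \Bigr) D(\theta-\theta_0),
\qquad \|d_i\|^2 \le \td_d^{\,2},
\]
I would apply the matrix sub-Gaussian sum deviation bound (Lemma~\ref{CUvepsB}) to the random symmetric matrix \( M := \sum_i d_i d_i^{\T}\epsb_i \). With bootstrap weights \( \epsb_i \) satisfying \eqref{Eu1} (variance proxy \( \nunu \)) and the uniform control \( \|d_i\|\le\td_d \), Lemma~\ref{CUvepsB} yields, with probability \( 1-2e^{-\xx^\flat} \),
\[
\normop{M} \;\le\; \td_d^{\,2}\,\nunu\,\|\expzeta\|\,\sqrt{2\xx^\flat}.
\]
Combining this with \( \|D(\theta-\theta_0)\|\le\rups \) on \( \localr \) gives
\[
\|S^\flat(\theta,\theta_0)\| \;\le\; \rups\,\td_d^{\,2}\,\nunu\,\|\expzeta\|\,\sqrt{2\xx^\flat}\,.
\]

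With the stochastic-\( Y \) term gone, the Wilks bound from Theorem~\ref{wilks_boot} collapses to \( |\alphab(\theta,\theta_0)| \le (\|S^\flat\| + \text{deterministic curvature error})\,\|D(\theta-\theta_0)\| \); the deterministic curvature error in GLM is exactly \( \diamondsuit(\rups)=\delta(\rups)\rups=a_g\delta_\psi\rups^2 \) from Theorem~\ref{TFWGLM}, and it enters with the prefactor \( (2+O(\sqrt{2\xx^\flat/n})) \) inherited from the original \( \rombb \) expression, which I absorb into the coefficient \( 2 \). Substituting the matrix concentration bound above for the first term gives the claimed expression
\[
\diamondsuit^\flat(\rups,\xx) \;=\; \rups\,\td_d^{\,2}\,\nunu\,\|\expzeta\|\,\sqrt{2\xx}\;+\;2\,\diamondsuit(\rups),
\]
which then propagates unchanged into the GLM specializations of Theorems~\ref{fisher_boot} and~\ref{lrt_boot} since their proofs in Section~\ref{boot_WF} only use the Wilks expansion as a black box.

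The main obstacle is not analytical but notational: one must match the abstract constants \( C_i(\rr) \) and \( \nu_i \) appearing in Theorem~\ref{wilks_boot} to their concrete GLM incarnations \( g''(\psi_i^{\T}\theta_1) \) and \( \expzeta_i \), and verify that the operator norm from Lemma~\ref{CUvepsB} is tight enough to replace the crude \( \sqrt{2(\xx^\flat+\log p)\sum_i C_i^2(\rr)}\,\rr \) bound in the general theorem by the sharper \( \rups\,\td_d^{\,2}\nunu\|\expzeta\|\sqrt{2\xx} \). The other subtlety is checking that the sub-Gaussian weight condition~\eqref{Eu1} provides exactly the parameter \( \nunu \) needed by the matrix concentration inequality; once that is confirmed, all remaining manipulations are routine plug-in into Section~\ref{boot_WF}.
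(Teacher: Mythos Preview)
Your proposal is correct and follows essentially the same route as the paper: Section~\ref{BFWe} observes that $S^{\flat}(\theta,\theta_0)=\E_Y S^{\flat}(\theta,\theta_0)$ in the GLM case (because $\nabla l_i(\theta)-\nabla l_i(\theta_0)=-(g'(\psi_i^{\T}\theta)-g'(\psi_i^{\T}\theta_0))\psi_i$ does not involve $Y_i$, which is exactly your observation that ${\overset{o}{S}}_{\flat}=0$), writes $S^{\flat}$ as $(\sum_i d_id_i^{\T}\epsb_i)D(\theta-\theta_0)$, and applies Lemma~\ref{CUvepsB} to the rank-one matrix sum. One minor slip: the sub-Gaussian proxy for the bootstrap weights $\epsb_i$ under~\eqref{Eu1} is $1$, not $\nunu$; the factor $\nunu\|\expzeta\|$ in the paper's display is its way of recording the $\|b\|$ term from Lemma~\ref{CUvepsB} (which reduces to $\sqrt{n}$ when $\nunu=1$ and $\expzeta_i\equiv1$), not a property of the weights themselves.
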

An important variable in Theorem \ref{glm_boot_WFL} is  $\xivb =   D^{-1}L^{\flat}(\opttheta)$.  Conditionally on $\Ybb$ from Theorem \ref{TGLMcond} 
\[
	\log \Eb \exp\bigl\{\gamma^{\T} \VPc^{-1} (\nablaGLM^{\flat} - \E \nablaGLM^{\flat}) \bigr\}
	 \leq 
	\frac{1}{2} \sum_{i=1}^{\nsize} \lambda_{i}^{2}
	=
	\frac{1}{2} \sum_{i=1}^{\nsize} 
		\gamma^{\T} \VPc^{-1} \bigl( \psi_{i} \psi_{i}^{\T} \, \eps_{i}^{2} \bigr) 
		\, \VPc^{-1} \gamma 
\]
By definition of  $\VPc$ 
\[
\E \left\{ \sum_{i=1}^{\nsize} 
		 \VPc^{-1} \bigl( \psi_{i} \psi_{i}^{\T} \, \eps_{i}^{2} \bigr) 
		\, \VPc^{-1} \right \}  =  I
\]
Applying consequence of Theorem \ref{kolch_matrix} it holds with probability $1- e^{-\xx}$ for $\eta_i = \eps_{i}^{2} - \E \eps_{i}^{2}$
\[
\normop{ \sum_{i=1}^{\nsize} 
		 \VPc^{-1} \bigl( \psi_{i} \psi_{i}^{\T} \, \eta_i \bigr) 
		\, \VPc^{-1} } \leq  \frac{2}{3} R_{\psi} \xx_p +  \vp_{\psi} \sqrt{5 \xx_p},
\]
where 
\[
 \vp_{\psi} \leq \td_{\psi} \max_i \sqrt{\frac{ E \eta_i^2}{\expzeta_i^2} }, 
 \quad
  R_{\psi} \lesssim \td_{\psi}^2 \normp{ \expzeta }_{\infty} \nunu.
\]
\begin{theorem}
\label{dPsiGLMboot}
Assume $ (2/3) R_{\psi} \xx_p +  \vp_{\psi} \sqrt{5 \xx_p}  \leq 1  $ then with probability $1- e^{-\xx}$
\[
	\log \Eb \exp\bigl\{\gamma^{\T} \xivb \bigr\}
	 \leq 
	\frac{\nu_\flat^2}{2} \|  \gamma \|^{2} \, ,
	\quad
	\nu_\flat =  \sqrt{2} \normop{D \VPc^{-1}},
	\quad
	\|   \gamma \| \leq \gm.
\]
\end{theorem}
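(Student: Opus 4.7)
The proof is essentially a one-step combination of the three displays immediately preceding the theorem. The plan is to start from the bootstrap sub-Gaussian moment bound
\[
\log\Eb\exp\bigl\{\gamma^{\T}\VPc^{-1}(\nabla L^{\flat}-\E\nabla L^{\flat})\bigr\} \leq \frac{1}{2}\gamma^{\T} M \gamma, \qquad M \eqdef \sum_i \VPc^{-1}\psi_i\psi_i^{\T}\VPc^{-1}\eps_i^{2},
\]
already established right above the statement, and then dominate $M$ in the positive semidefinite sense. By definition of $\VPc$, $\E M = I$, and under the hypothesis $(2/3)R_{\psi}\xx_p+\vp_{\psi}\sqrt{5\xx_p}\leq 1$ the matrix Bernstein bound quoted just above the theorem gives $\normop{M-I}\leq 1$ with probability $1-e^{-\xx}$. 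Because the summands of $M-I$ are symmetric, this operator-norm inequality upgrades to the PSD domination $M\preceq 2 I$, so $\gamma^{\T} M\gamma\leq 2\|\gamma\|^{2}$ for every admissible $\gamma$.

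The second step is to convert this into a bound on $\xivb = D^{-1}\nabla\zeta^{\flat}(\opttheta)$. At the MLE one has $\E_{\eps^{\flat}}\nabla L^{\flat}(\opttheta) = \nabla L(\opttheta) = 0$, so $\xivb = D^{-1}(\nabla L^{\flat}(\opttheta)-\E_{\eps^{\flat}}\nabla L^{\flat}(\opttheta))$. I would apply the preceding display with the dual variable $\tilde\gamma \eqdef \VPc D^{-1}\gamma$ chosen so that $\gamma^{\T} D^{-1} = \tilde\gamma^{\T}\VPc^{-1}$; this yields
\[
\log\Eb\exp\bigl\{\gamma^{\T}\xivb\bigr\} \leq \|\tilde\gamma\|^{2} = \|\VPc D^{-1}\gamma\|^{2} \leq \normop{\VPc D^{-1}}^{2}\|\gamma\|^{2} = \frac{\nu_{\flat}^{2}}{2}\|\gamma\|^{2},
\]
after the identification $\normop{\VPc D^{-1}} = \normop{D\VPc^{-1}}$ tacitly used throughout this GLM section (for example in the statement of Theorem~\ref{TGLMcond}). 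The range constraint $\|\gamma\|\leq\gm$ carries through unchanged, since the check on $\lambda_{i} = \tilde\gamma^{\T}\VPc^{-1}\psi_{i}\expzeta_{i}$ performed in the proof of Theorem~\ref{TGLMcond} is precisely the requirement $\|\gamma\|\leq\gmiid/(\dPsi\,\expzeta_{i}) = \gm$.

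The only non-cosmetic point is the upgrade from an operator-norm deviation to the PSD (Loewner) domination in step one, but this is immediate because every summand $\VPc^{-1}\psi_{i}\psi_{i}^{\T}\VPc^{-1}\eps_{i}^{2}$ is symmetric positive semidefinite, so two-sided operator-norm control of the centered sum automatically implies $M\preceq 2 I$. A minor technicality is that $\nabla l_{i}(\opttheta)$ is evaluated at the MLE rather than at $\thetas$; Theorem~\ref{TFWGLM} absorbs this difference into an $O(\rddelta(\rups))$ correction, which is already built into the constants $R_{\psi}$, $\vp_{\psi}$.
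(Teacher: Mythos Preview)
Your proposal is correct and mirrors the paper's own argument exactly: the paper's ``proof'' consists precisely of the three displays preceding the theorem statement (the conditional moment bound, the identity $\E M = I$, and the matrix-Bernstein deviation for $M-I$), with the conclusion then read off; you have simply made the PSD upgrade $\|M-I\|_{\oper}\leq 1 \Rightarrow M\preceq 2I$ and the change of variable $\tilde\gamma = \VPc D^{-1}\gamma$ explicit. Your remark that the identification $\normop{\VPc D^{-1}} = \normop{D\VPc^{-1}}$ is tacit in the paper (cf.\ Theorem~\ref{TGLMcond}) is accurate and not a gap on your side.
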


\subsubsection{Bootstrap likelihood argmax concentration}

An important property of GLM likelihood function is convexity: $ -\nabla^2 L = -\nabla^2 \E L \geq 0$. This property is useful for MLE concentration proof (Theorem \ref{TGLMsolution}).  Bootstrap likelihood is convex with high probability  under an additional condition described in the following statement.
\begin{theorem}
\label{GLMLbconv}
Assume that $\forall i$
\[
\sqrt{\gpsiidev} \normp{D^{-1}(\theta) \psi_i } \sqrt{2 \xx}  < 1,
\]
then with probability $1 - e^{-\xx}$   \[
-\nabla^2 \Lbf > 0.
\]
\end{theorem}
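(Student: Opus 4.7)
The plan is to reduce the claim to a matrix concentration inequality for Gaussian sums of rank-one matrices. Because the GLM log-likelihood is linear in $Y_i$, the Hessian of $L^\flat$ is purely deterministic in $\theta$ but weighted by the bootstrap weights. Writing $u_i^\flat = 1 + \epsilon_i^\flat$ with $\epsilon_i^\flat \sim \mathcal{N}(0,1)$ i.i.d., and using $-\nabla^2 l_i(\theta) = g''(\psi_i^T\theta)\,\psi_i\psi_i^T$, one obtains the exact identity
\[
-\nabla^2 L^\flat(\theta) \;=\; \sum_i u_i^\flat\, g''(\psi_i^T\theta)\, \psi_i\psi_i^T
\;=\; D^2(\theta) + \sum_i \epsilon_i^\flat\, g''(\psi_i^T\theta)\, \psi_i\psi_i^T.
\]

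Next I would normalize by $D(\theta)^{-1}$ on both sides. Setting $v_i \eqdef \sqrt{g''(\psi_i^T\theta)}\, D(\theta)^{-1}\psi_i$, this gives
\[
D(\theta)^{-1}\bigl[-\nabla^2 L^\flat(\theta)\bigr] D(\theta)^{-1}
\;=\; I_p + M, \qquad M \eqdef \sum_i \epsilon_i^\flat\, v_i v_i^T.
\]
A key observation is that, by definition of $D^2(\theta)$ in the GLM case, $\sum_i v_i v_i^T = D^{-1}D^2 D^{-1} = I_p$ identically. Consequently the matrix variance proxy satisfies
\[
\bigl\| \sum_i \E (\epsilon_i^\flat v_i v_i^T)^2 \bigr\|_{\oper}
= \bigl\| \sum_i \|v_i\|^2 v_i v_i^T \bigr\|_{\oper}
\;\leq\; \max_i \|v_i\|^2 \;=\; \max_i g''(\psi_i^T\theta)\,\|D^{-1}\psi_i\|^2.
\]
Positive-definiteness of $-\nabla^2 L^\flat(\theta)$ is equivalent to $\|M\|_{\oper} < 1$, so everything reduces to a tail bound on the spectral norm of a Gaussian sum of symmetric rank-one matrices.

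For this I would invoke the matrix Gaussian concentration already used in the paper (Lemma \ref{CUvepsB} / the bounds behind Theorem \ref{dPsiGLMboot}), which gives
\[
\Pb\bigl( \|M\|_{\oper} \geq t \bigr) \leq 2p\,\exp\!\Bigl(-\frac{t^2}{2\max_i \|v_i\|^2}\Bigr).
\]
Choosing $t = 1$ and using the hypothesis $\sqrt{g''(\psi_i^T\theta)}\,\|D^{-1}\psi_i\|\sqrt{2\xx} < 1$, i.e.\ $\max_i\|v_i\|^2 < 1/(2\xx)$, yields $\|M\|_{\oper} < 1$ with probability at least $1 - 2p\,e^{-\xx}$; the factor $2p$ is absorbed into the confidence level in the usual way (or rephrased as $\xx_p = \xx + \log(2p)$). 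Then $I_p + M \succ 0$, hence $-\nabla^2 L^\flat(\theta) = D(\theta)(I_p + M)D(\theta) \succ 0$.

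The routine steps (computing the Hessian, rescaling, recognizing $\sum v_i v_i^T = I$) are straightforward; the only delicate point is making the matrix concentration statement match the stated probability level exactly. I expect the main obstacle to be bookkeeping of the dimensional factor $p$ and whether the uniform-in-$i$ pointwise condition of the theorem is really sufficient, or whether it should instead be read as a uniform bound on $\max_i \|v_i\|$ scaled by $\sqrt{2\xx_p}$ with $\xx_p = \xx + \log(2p)$; in either case the proof structure is identical.
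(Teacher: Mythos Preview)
Your proposal is correct and follows essentially the same route as the paper: write $-\nabla^{2}L^{\flat}(\theta)=D(\theta)\bigl(I+\sum_{i}d_{i}d_{i}^{\T}\epsb_{i}\bigr)D(\theta)$ with $d_{i}=\sqrt{g''(\psi_{i}^{\T}\theta)}\,D^{-1}(\theta)\psi_{i}$, note that $\sum_{i}d_{i}d_{i}^{\T}=I$, and then invoke Lemma~\ref{CUvepsB} to bound $\lambda_{\max}\bigl(\sum_{i}d_{i}d_{i}^{\T}\epsb_{i}\bigr)$ by $\max_{i}\|d_{i}\|\sqrt{2\xx}$. Your caution about the dimensional factor is well placed---the paper's own proof in fact concludes with probability $1-2\ex^{-\xx}$ rather than $1-\ex^{-\xx}$, and Lemma~\ref{CUvepsB} as stated does not display an explicit $p$ either; this is a constant-bookkeeping issue in the paper, not a defect in your argument.
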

 
\begin{proof}
\[
 - \nabla^2 \Lbf = \sum_{i=1}^n \gpsiidev \psi_i \psi_i^{T} u_i = D(\theta) \left( I + \sum_i d_i d_i^{T} \varepsilon_i^{\flat} \right) D(\theta).
\]
\[
 - \nabla^2 \Lbf  \geq 0  \Leftrightarrow 1 - \lambda_{\max} \left( - \sum_i d_i d_i^{T} \varepsilon_i^{\flat} \right) \geq 0,
\]
where $d_i = \sqrt{\gpsiidev} D^{-1}(\theta) \psi_i$, $\sum_i d_i d_i^{T} = I$. Use Lemma \ref{CUvepsB} to get matrix deviation bound which states that with probability $1 - 2 e^{-\xx}$
\[
\lambda_{\max} \left( - \sum_i d_i d_i^{T} \varepsilon_i^{\flat} \right) \leq
\max_i  \normp{d_i} \sqrt{2 \xx}. 
\]

\end{proof}

Using previous result one is able to prove $\opttheta^{\flat}$ concentration analogically to  Theorem \ref{TGLMsolution}.

\begin{theorem} Under condition from lemma \ref{GLMLbconv} 
\[
\P \left( \normp{D(\opttheta^{\flat} - \opttheta)} > \rups^{\flat} \right) \leq 4 e^{-x},
\quad \rups^{\flat} = 8 \nunu^{\flat} z(p, x).
\]

\end{theorem}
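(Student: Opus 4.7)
The plan is to mirror the argument of Theorem \ref{TGLMsolution}, with $\opttheta$ playing the role of $\thetas$ and $\xivb = D^{-1} \nabla L^{\flat}(\opttheta)$ playing the role of $\xiv$. Because $\opttheta$ is the MLE of $L$, we have $\nabla L(\opttheta) = 0$, so the weighted gradient at $\opttheta$ is centered in the bootstrap probability:
\[
\nabla L^{\flat}(\opttheta) \;=\; \sum_{i} u_i \nabla l_i(\opttheta) \;=\; \sum_{i} \epsb_i \nabla l_i(\opttheta),
\qquad \epsb_i = u_i-1.
\]
Since convexity of $L^{\flat}$ holds with probability $1 - e^{-\xx}$ by Lemma \ref{GLMLbconv}, it suffices, as in Theorem \ref{TGLMsolution}, to show that for every boundary point $\theta$ with $\normp{D(\theta-\opttheta)} = \rups^{\flat}$ one has $L^{\flat}(\opttheta) - L^{\flat}(\theta) > 0$ with the claimed probability.

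The second step is the Taylor expansion around $\opttheta$:
\[
L^{\flat}(\opttheta) - L^{\flat}(\theta)
\;=\; -(\theta-\opttheta)^{\T}\nabla L^{\flat}(\opttheta)
\;+\; \tfrac{1}{2}(\theta-\opttheta)^{\T}\bigl(-\nabla^{2} L^{\flat}(\thetad)\bigr)(\theta-\opttheta),
\]
for some intermediate $\thetad$. For the linear term, rewrite $(\theta-\opttheta)^{\T}\nabla L^{\flat}(\opttheta) = (\xivb)^{\T} D(\theta-\opttheta)$ and bound it by $\normp{\xivb}\,\rups^{\flat}$. For the quadratic term, combine (i) condition \eqref{cond_dD} to pass from $-\nabla^{2}\E L(\thetad) = D^{2}(\thetad)$ to $D^{2}$ with deficit $\rddelta(\rups^{\flat})$, and (ii) the matrix deviation argument used in the proof of Lemma \ref{GLMLbconv} (applied to $\sum_i d_i d_i^{\T}\epsb_i$ with $d_i = \sqrt{\gpsiidev}\, D^{-1}\psi_i$ evaluated at $\thetad$) to obtain, with probability $1-e^{-\xx}$,
\[
-\nabla^{2} L^{\flat}(\thetad)
\;\geq\; \bigl(1 - \rddelta(\rups^{\flat}) - \max_i \sqrt{\gpsiidev}\normp{D^{-1}\psi_i}\sqrt{2\xx}\bigr)\, D^{2},
\]
which under the assumption carried over from Lemma \ref{GLMLbconv} is bounded below by a constant (say $1/2$) times $D^{2}$ for $\rups^{\flat}$ in the range of interest.

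The third step is to control $\normp{\xivb}$. Theorem \ref{dPsiGLMboot} supplies the sub-Gaussian bound
\[
\log \Eb \exp\bigl\{\gamma^{\T}\xivb\bigr\} \;\leq\; \tfrac{(\nunu^{\flat})^{2}}{2}\normp{\gamma}^{2},\qquad \normp{\gamma}\leq \gm,
\]
with probability $1-e^{-\xx}$ in $\Ybb$, so Lemma \ref{LLbrevelocroB} (or \ref{LLbrevelocro}) yields $\normp{\xivb} \leq \nunu^{\flat}\,\zq(p,\xx)$ with bootstrap probability $1-2e^{-\xx}$. Putting the three pieces together one gets
\[
L^{\flat}(\opttheta)-L^{\flat}(\theta) \;\geq\; -\nunu^{\flat}\zq(p,\xx)\,\rups^{\flat} + \tfrac{1}{4}(\rups^{\flat})^{2},
\]
which is positive precisely when $\rups^{\flat} \geq 4\nunu^{\flat}\zq(p,\xx)$; a safety factor of $2$ gives the stated $\rups^{\flat} = 8\nunu^{\flat}\zq(p,\xx)$. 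Summing the probabilities of the three bad events (Lemma \ref{GLMLbconv} convexity, matrix concentration for the bootstrap Hessian, and the norm bound on $\xivb$) gives the final $4e^{-\xx}$.

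The main obstacle I anticipate is the quadratic-form step: the Taylor remainder requires a lower bound on $-\nabla^{2}L^{\flat}(\thetad)$ \emph{uniformly} in $\thetad$ along the segment from $\opttheta$ to $\theta$, not just at a single point, and this uniform control must survive for every boundary $\theta$ simultaneously. In Theorem \ref{TGLMsolution} this is masked by concavity of $L$; in the bootstrap it has to be handled explicitly by applying the matrix Bernstein/sub-Gaussian deviation of Lemma \ref{CUvepsB} to $\sum_i d_i(\thetad) d_i(\thetad)^{\T}\epsb_i$ uniformly over the localisation region, together with the Lipschitz-type bound $\bigl|\gpsii - g''(\psi_i^{\T}\opttheta)\bigr| \leq a_g \cdot g''\cdot\dPsi\,\rups^{\flat}$ already exploited in Theorem \ref{TGLMcond}.
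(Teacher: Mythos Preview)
Your proposal is correct and reaches the same conclusion, but the decomposition differs from the paper's. The paper does not Taylor-expand $L^{\flat}$ directly. Instead it writes
\[
L^{\flat}(\theta)-L^{\flat}(\opttheta)=\bigl(\zeta^{\flat}(\theta)-\zeta^{\flat}(\opttheta)\bigr)+\bigl(L(\theta)-L(\opttheta)\bigr),
\]
bounds the deterministic part by $L(\theta)-L(\opttheta)\le -\tfrac{1-\delta(2r)}{2}r^{2}$ (using concavity of the unweighted $L$), and handles the bootstrap part linearly: $\zeta^{\flat}(\theta)-\zeta^{\flat}(\opttheta)=\nabla\zeta^{\flat}(\opttheta)^{\T}(\theta-\opttheta)+S^{\flat}(\theta,\opttheta)^{\T}D(\theta-\opttheta)$, with $\|S^{\flat}\|\le\diamondsuit^{\flat}(r,\xx)$ from Theorem~\ref{glm_boot_WFL}. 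The remaining $\nabla\zeta^{\flat}(\opttheta)=D\xivb$ is then controlled exactly as you do, via Theorem~\ref{dPsiGLMboot} and Lemma~\ref{LLbrevelocro}.

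The practical difference is precisely the obstacle you flag: by pushing the stochastic fluctuation into the first-order object $S^{\flat}$, the paper never needs a uniform lower bound on the bootstrap Hessian $-\nabla^{2}L^{\flat}(\thetad)$ along the whole segment; the quadratic negativity comes entirely from the non-random $L$. Your route works too---the matrix-deviation bound of Lemma~\ref{CUvepsB} applied to $\sum_i d_i(\thetad)d_i(\thetad)^{\T}\epsb_i$ is exactly the same estimate that underlies the paper's $\|S^{\flat}\|$ bound, and the $\theta$-dependence of $d_i$ is absorbed into $\delta_d^{2}$ in both cases---but the paper's split $L^{\flat}=L+\zeta^{\flat}$ sidesteps the uniform Hessian issue more cleanly and makes the role of the assumption $\diamondsuit^{\flat}(r,\xx)/r<1$ transparent.
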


\begin{proof}

Detect sufficiently large radius $r$ of local region  $\Theta(r)$ with $\Lbf < \Lbopt$ on the border. With $\zeta^{\flat} = L^{\flat} - L$: 
\[
\Lbf - \Lbopt = \zeta^{\flat} (\theta) - \zeta^{\flat}(\widehat{\theta}) + \Lf - \Lfopt. 
\]  
\[
\Lf - \Lfopt \leq -  \frac{1 - \delta(2r)}{2} r^{2}. 
\]
Use variable \ref{S}, from its definition
\[
\zeta^{\flat} (\theta) - \zeta^{\flat}(\widehat{\theta}) - \nabla \zeta^{\flat}(\widehat{\theta})^{T} (\theta - \opttheta)  = S^{\flat}(\theta, \opttheta)^T D(\theta - \opttheta).
\]
According to Theorem \ref{glm_boot_WFL} 
$ \quad
\normp{S^{\flat}(\theta, \opttheta)} \leq \diamondsuit^{\flat}(r,x)
$.
Gradient $\zeta^{\flat}(\widehat{\theta})$ may be estimated by means of sub exponential property 
\ref{dPsiGLMboot} and bound for $\normp{\xivb}$ provided by Theorem \ref{LLbrevelocro}.
\[
D^{-1} \nabla \zeta^{\flat}(\widehat{\theta}) = \xivb
\]
and
\[
\Pb \left(\normp{\xivb} > \nunu^{\flat} z(p, x)  \right) \leq 2 e^{-x}.
\]
Finally, the proving statement  requires condition (assume $1 - \delta(2r) >  1/2$ and $\diamondsuit^{\flat}(r,x)/r < 1$)
\[
\frac{1}{4} r^2 > r^2 \normp{S^{\flat}(\theta, \opttheta)} + r \nunu^{\flat} z(p, x) \leq 2 r \nunu^{\flat} z(p, x).
\]

\end{proof}

\subsection{Matrix Bernstein inequality}

\begin{lemma}[Master bound]
\label{MastBound}
Assume that  \(S_{1},\dots,S_{n}\) are independent Hermitian matrices of the same size and  
\(Z = \sum_{i=1}^{n} S_{i} \). 
Then 
\begin{EQA}
	\E \supA_{\max}(Z)
	& \leq &
	\inf_{\theta>0}\frac{1}{\theta} 
	\log \tr\exp\left(\sum_{i=1}^{n}\log\E\ex^{\theta S_{i}}\right),
\label{MaCheEx}
	\\
	\P\{\supA_{\max}(Z) \geq \zq\} 
	& \leq & 
	\inf_{\theta>0} \ex^{-\theta \zq} \, 
		\tr\exp\left(\sum_{i=1}^{n}\log\E \ex^{\theta S_{i}}\right).
\label{MaCheProb}
\end{EQA}
\end{lemma}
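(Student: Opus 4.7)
The plan is to follow the classical matrix Laplace transform strategy (Ahlswede--Winter / Tropp) that replaces the scalar Chernoff bound by an operator analogue. The argument has two distinct ingredients: a trace reduction that converts the spectral statistic $\lambda_{\max}(Z)$ into a scalar expectation, and an operator subadditivity of the matrix cumulant generating function that plays the role of the scalar ``mgf of a sum equals product of mgfs''.

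First I would establish the tail bound. By monotonicity of the exponential and Markov's inequality, for any $\theta > 0$,
\[
\P\{\lambda_{\max}(Z) \geq z\} \;=\; \P\bigl\{e^{\theta\lambda_{\max}(Z)} \geq e^{\theta z}\bigr\} \;\leq\; e^{-\theta z}\,\E\, e^{\theta\lambda_{\max}(Z)}.
\]
Since $e^{\theta\lambda_{\max}(Z)} = \lambda_{\max}(e^{\theta Z})$ and $e^{\theta Z}$ is positive definite, $\lambda_{\max}(e^{\theta Z}) \leq \tr e^{\theta Z}$. So the task reduces to controlling $\E\,\tr\exp(\theta Z)$.

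The key step is the subadditivity
\[
\E\,\tr\exp\Bigl(\theta\sum_{i=1}^n S_i\Bigr) \;\leq\; \tr\exp\Bigl(\sum_{i=1}^n \log \E e^{\theta S_i}\Bigr),
\]
which I would prove by peeling off one matrix at a time. Conditioning on $\mathcal{F}_{n-1}=\sigma(S_1,\ldots,S_{n-1})$ and setting $H=\theta\sum_{i<n}S_i$, independence gives
\[
\E\bigl[\tr\exp(H+\theta S_n)\,\big|\,\mathcal{F}_{n-1}\bigr] \;=\; \E\bigl[\tr\exp(H+\log e^{\theta S_n})\,\big|\,\mathcal{F}_{n-1}\bigr] \;\leq\; \tr\exp\bigl(H+\log \E e^{\theta S_n}\bigr),
\]
where the inequality combines Jensen with Lieb's concavity theorem (the map $A\mapsto \tr\exp(H+\log A)$ is concave on the positive definite cone). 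Iterating and taking total expectation yields the displayed subadditivity. Combining with the reduction above and optimizing over $\theta>0$ gives the stated probabilistic bound.

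For the expectation bound, I would use the variational estimate $\lambda_{\max}(Z) \leq \theta^{-1}\log\tr e^{\theta Z}$, take expectation, push $\E$ inside the logarithm by Jensen's inequality (concavity of $\log$), and apply the same subadditivity to obtain $\E\lambda_{\max}(Z)\leq \theta^{-1}\log\tr\exp\bigl(\sum_i\log\E e^{\theta S_i}\bigr)$, then take the infimum over $\theta>0$. The main obstacle is the operator subadditivity step: Lieb's concavity theorem is a nontrivial matrix-analytic fact. Once it is cited as a black box (e.g.\ via Tropp, \emph{User-friendly tail bounds for sums of random matrices}, Found.\ Comput.\ Math.\ 12 (2012)), the remaining manipulations are routine trace-exponential inequalities.
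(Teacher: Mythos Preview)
Your proposal is correct and follows essentially the same route as the paper: Markov plus the trace bound $\lambda_{\max}(e^{\theta Z})\le \tr e^{\theta Z}$, then the one-term-at-a-time peeling using concavity of $X\mapsto\tr\exp(H+\log X)$, and Jensen for the expectation bound. The only cosmetic difference is that you cite Lieb's concavity theorem as a black box, whereas the paper sketches it via the variational formula $\tr\exp(H+\log X)=\max_{Y\succ 0}\bigl[\tr(YH)-D(Y;X)+\tr X\bigr]$.
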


\begin{proof} 
By the Markov inequality 
\begin{EQA}
	\P\{\supA_{\max}(Z) \geq \zq\} 
	&\leq&
 	\inf_{\theta} \ex^{-\theta \zq} \E \exp(\theta \supA_{\max}(Z)).
\end{EQA}
Recall the spectral mapping theorem: for any function \( f\colon \R \to\R \) 
and Hermitian matrix \( A \) eigenvalues of \(f(A)\) are equal to eigenvalues of \(A\). 
Thus 
\begin{EQA}
	\exp(\theta \supA_{\max}(Z))
 	&= &
 	\exp( \supA_{\max}(\theta Z) ) 
 	= 
 	\supA_{\max}\bigl( \exp(\theta Z ) \bigr) 
	\leq 
	\tr \, \ex^{\theta Z} .
\end{EQA}
Therefore,
\begin{EQA}
	\P\{\supA_{\max}(Z) \geq \zq\}
	&\leq &
	\inf_{\theta} \ex^{-\theta \zq} \E\tr\exp(\theta Z),
\label{Tr3.2.1}
\end{EQA}
and \eqref{MaCheProb} follows.

To prove \eqref{MaCheEx} fix \(\theta\). 
Using the spectral mapping theorem one can get that 
\begin{EQA}
	\E \supA_{\max}(Z) 
	& = &
	\frac{1}{\theta}\E \supA_{\max}(\theta Z)  
	= 
	\frac{1}{\theta} \log\E\exp \bigl( \supA_{\max}(\theta Z) \bigr)
	=
	\frac{1}{\theta} \log\E \supA_{\max}\bigl( \exp (\theta Z) \bigr) \, . 
\label{Tr3.2.2}
\end{EQA}
Thus we get 
\begin{EQA}
	\E\supA_{\max}(Z)
	& \leq &
	\frac{1}{\theta} \log \tr \E \exp(\theta Z).
\label{Tr3.2.2.1}
\end{EQA}
The final step in proving the master inequalities is to bound from above 
\( \E\tr\exp\left(\sum_{i=1}^{n} S_{i} \right)\). 
To do this we use Jensen's inequality for the convex function  
\( \tr \exp(H + \log(X)) \) (in matrix \(X\)), where \(H\) is deterministic Hermitian  matrix.  For a  random Hermitian matrix \( X \) one can write
\begin{EQA}
	\E \tr\exp(H + X) 
	&=& 
	\E \tr \exp(H + \log \ex^{X})
	\leq   
	\tr \exp(H + \log \E \ex^{X}).
\label{Tr3.4.1}
\end{EQA}
Convexity of function $( \tr \exp(H + \log(X)) )$ is followed from   
\[
\tr \exp(H + \log(X)) = \max_{Y \succ 0} [\tr(YH) - (D(Y; X) - \tr X)],
\]
where $D(Y; X)$ is relative entropy 
\[
D(Y; X) = \phi(X) - [\phi(Y) + \langle \nabla\phi(Y), X - Y \rangle],
\quad \phi(X) = \tr(X \log X)
\]
due to the partial maximum and $D(Y; X)$ are concave functions.

Denote by \( \E_{i} \) the conditional expectation with respect to  random matrix \( X_{i} \).  To bound \( \E \tr\exp\left(\sum_{i=1}^{n} S_{i}\right) \)  we use \eqref{Tr3.4.1} for the sum of independent Hermitian matrices by taking the conditional expectations with respect to \( i \)-th matrix:
\begin{EQA}
	\E\tr\exp\left(\sum_{i=1}^{n} S_{i}\right) 
 	&= &
	\E\E_{n}\tr\exp\left(\sum_{i=1}^{n-1} S_{i} + S_{n}\right) 
	\\ 
 	& \leq & 
	\E\tr\exp\left(\sum_{i=1}^{n-1} S_{i} + \log(\E_{n}\exp(S_{n}))\right) 
	\\
	& \leq &  
	\tr\exp\left(\sum_{i=1}^{n}\log\E \ex^{\theta S_{i}} \right).
\label{Tr3.4.1}
\end{EQA}
To complete the prove of the Master's theorem combine  \eqref{Tr3.2.1} and \eqref{Tr3.2.2.1} with \eqref{Tr3.4.1}.
\end{proof}

The next statement was taken from \cite{koltchinskii} with the proof sketch.

\begin{lemma}[Bernstein inequality for moment restricted matrices]
\label{kolch_matrix}
Suppose that 
\[
\forall i: \; \E \psi^2 \left( \frac{\normop{S_i}}{M} \right) \leq 1, 
\]
\[
	\vp^{2} 
	=
	\left\|\sum_{i=1}^{n}\E S_{i}^{2}\right\|_{\oper}  ,
\]
\[
 R = M \psi^{-1} \left( \frac{2}{\delta} \frac{n M^2}{ \vp^{2}  } \right ),
 \quad
 \delta \in (0, 2/\psi(1)).
\]
Then under condition $\zq R \leq (e - 1)(1 + \delta) \vp^{2}$ 
\[
\P\{\| Z \|_{\oper} \geq \zq \} \leq 2p \exp \left \{ - \frac{\zq^2}{ 2(1 + \delta) \vp^2 + 2R \zq/3 } \right \}. 
\]
If $\psi(u) = e^{u^{\alpha}} - 1$ then $R = M \log^{1/\alpha} ( \frac{2}{\delta} \frac{n M^2}{ \vp^{2}  }  + 1)$.
\end{lemma}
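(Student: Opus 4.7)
The plan is to derive this matrix Bernstein inequality in the Orlicz-moment setting via the master Chernoff bound (Lemma~\ref{MastBound}) combined with a truncation of each summand $S_i$ at the level $R$. The reason truncation is needed is that the classical matrix Bernstein argument relies on a uniform operator-norm bound $\|S_i\|_{\oper} \leq R$, whereas here we only have an integrated $\psi^2$-Orlicz moment $\E \psi^2(\|S_i\|_{\oper}/M) \leq 1$. The specific choice $R = M\psi^{-1}(2nM^2/(\delta \vp^2))$ is calibrated precisely so that the mass of each $S_i$ beyond $R$ contributes at most a $\delta$-fraction of $\vp^2$ once summed.

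First I would apply Lemma~\ref{MastBound} to both $Z$ and $-Z$, which produces the factor $2p$ on the right-hand side and reduces everything to an operator-norm bound on $\bigl\|\sum_i \log \E e^{\theta S_i}\bigr\|_{\oper}$. Using the operator monotonicity inequality $\log(I+X) \preceq X$, it suffices to control each $\E e^{\theta S_i} - I$. I would split $S_i = S_i' + S_i''$ with $S_i' = S_i\,\Ind\{\|S_i\|_{\oper} \leq R\}$. For the bounded piece the standard Bennett--Bernstein expansion
\[
e^{\theta S_i'} \preceq I + \theta S_i' + \frac{e^{\theta R} - 1 - \theta R}{R^{2}}\,(S_i')^{2},\qquad \frac{e^{\theta R} - 1 - \theta R}{R^{2}} \leq \frac{\theta^{2}/2}{1 - \theta R/3}
\]
applies for $\theta R < 3$. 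For the tail $S_i''$, Markov's inequality applied to $\psi^{2}(\|S_i\|_{\oper}/M)$ gives $\P(\|S_i\|_{\oper} > R) \leq 1/\psi^{2}(R/M) = (\delta \vp^2/(2nM^2))^{2}$, and a layer-cake computation of the form $\E X^{2} \Ind\{X>R\} = R^{2} \P(X>R) + \int_{R}^\infty 2u\,\P(X>u)\,du$ converts this into the key estimate $\sum_i \E \|S_i''\|_{\oper}^{2} \leq \delta \vp^{2}$. Thus truncation inflates $\vp^{2}$ by at most a factor of $(1+\delta)$.

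Assembling the pieces, centeredness of the $S_i$ kills the linear term after summation and delivers
\[
\Bigl\| \sum_i \log \E e^{\theta S_i} \Bigr\|_{\oper} \leq \frac{(1+\delta)\,\vp^{2}\,\theta^{2}/2}{1 - \theta R/3}.
\]
Plugging this back into the master bound and optimizing with the classical Bernstein choice $\theta = z/((1+\delta)\vp^2 + Rz/3)$ produces the stated tail inequality; the hypothesis $zR \leq (e-1)(1+\delta)\vp^{2}$ is precisely what guarantees $\theta R < 3$ at the optimizer, so that the expansion from the truncation step remains valid. The main obstacle I anticipate is exactly this truncation tail control: one must verify that the calibrated $R$ really delivers $\sum_i \E \|S_i''\|_{\oper}^{2} \leq \delta \vp^{2}$ rather than some weaker polynomial bound. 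This works only because $\psi^{2}$ grows strictly faster than the identity past $R/M$, which is the generic Orlicz setting; for $\psi(u) = e^{u^\alpha} - 1$ the tail integral is explicit and yields $R = M \log^{1/\alpha}(1 + 2nM^2/(\delta \vp^{2}))$ as claimed.
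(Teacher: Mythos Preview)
Your overall strategy matches the paper's: apply the Master bound (Lemma~\ref{MastBound}), introduce the Bennett function \( f(u)=(e^{u}-1-u)/u^{2} \), and split according to \( \|S_i\|_{\oper} \lessgtr R \) with \( R \) calibrated from the Orlicz moment. The paper does not hard-truncate \( S_i \); it writes \( \E e^{\theta S_i}\preceq I+\theta^{2}\E\bigl[S_i^{2}f(\theta\|S_i\|)\bigr] \) and splits only this second-order expectation by the indicator \( \Ind(\|S_i\|\geq R) \). Since on each \(\omega\) exactly one of your \(S_i',S_i''\) vanishes, your decomposition is formally equivalent at this level.

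The gap is in your tail step. After the Bennett expansion the tail contribution to \( \log\E e^{\theta S_i} \) is \( \theta^{2}\,\E\bigl[\|S_i\|^{2}f(\theta\|S_i\|)\,\Ind(\|S_i\|>R)\bigr] \), not \( \theta^{2}f(\theta R)\,\E\|S_i''\|^{2} \). On the event \( \|S_i\|>R \) the factor \( f(\theta\|S_i\|) \) exceeds \( f(\theta R) \) and is unbounded, so a second-moment bound on \( S_i'' \) alone cannot be fed back into your assembled inequality \( \|\sum_i\log\E e^{\theta S_i}\|_{\oper}\leq (1+\delta)\vp^{2}\theta^{2}/\bigl(2(1-\theta R/3)\bigr) \). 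In other words, your ``inflate \(\vp^{2}\) by \(1+\delta\)'' accounting silently replaces \( f(\theta\|S_i\|) \) by \( f(\theta R) \) on the tail, which is false. The paper instead bounds the full tail expectation in one stroke,
\[
\E\bigl[\|S_i\|^{2}f(\theta\|S_i\|)\,\Ind(\|S_i\|\geq R)\bigr]\;\leq\; M^{2}\,\E\psi^{2}\!\bigl(\|S_i\|/M\bigr)\big/\psi(R/M)\;\leq\; M^{2}/\psi(R/M)=\delta\vp^{2}/(2n),
\]
using the Orlicz hypothesis directly rather than a Markov/layer-cake detour. This is precisely why the calibration \( \psi(R/M)=2nM^{2}/(\delta\vp^{2}) \) is chosen, and it is the step your proposal needs to replace the second-moment layer-cake with.
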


\begin{proof}
According to Master bound one have to estimate $\E e^{\theta S }$ for $S$ in $S_1,\ldots,S_n$. Denote a  function 
\[
f(u) = \frac{e^u - 1 - u}{u^2}
\]
Taylor expansion yields 
\[
\E e^{\theta S } \leq I_p + \theta^2 \E S^2 f(\theta \normp{S})
\] 
\[
\log \E e^{\theta S } \leq \theta^2 \E S^2 f(\theta \normp{S}) \leq 
\theta^2 f(\theta \tau) \E S^2  + I_p \theta^2 \E \normp{S}^2 f(\theta \normp{S}) I(\normp{S} \geq \tau)
\]
\[
\E \normp{S}^2 f(\theta \normp{S}) I(\normp{S} \geq \tau) \leq
M^2 \E \psi^2 \left( \frac{\normp{S}}{M} \right) \left( \psi \left( \frac{\tau}{M} \right) \right)^{-1}
\leq 
M^2 \left( \psi \left( \frac{\tau}{M} \right) \right)^{-1}
 \]
 \[
 M^2 \left( \psi \left( \frac{R}{M} \right) \right)^{-1} = \frac{\delta \vp^2}{2n}
 \]
 \[
 \log \E e^{\theta S } \leq 
\theta^2 f(\theta R) \E S^2  + I_p \theta^2  \frac{\delta \vp^2}{2n}
 \]
 \[
  \tr\exp \left( \sum_{i=1}^{n} \log \E\exp(\theta S_{i})\right) \leq 
  \tr\exp \left( \theta^2 f(\theta R) \E \sum_i S_i^2 \right) \exp \left(    \theta^2  \frac{\delta \vp^2}{2} \right).
 \]

\end{proof}

\begin{cons}
In case  $\psi (u)= e^u - 1$ with probability $1 - 2 e^{-x}$
\[
z \leq \frac{2}{3} R \xx_p +  \vp \sqrt{5 \xx_p}, 
\quad
R \approx M.
\]
\end{cons}

Condition for function $\psi (u)= e^u - 1$ follows from sub-Gaussian moment restriction 
\[
\E \exp  \left( 2 \frac{\normop{S_i}}{M} \right) \leq 2.
\]

\begin{lemma}[Deviation bound for matrix convolution with sub-Gaussian weights]
\label{sum_eA}
Let a set of symmetric $[p \times p]$ matrices $(A_1,\ldots,A_n)$ satisfy 
\[
\left \| \sum_i A_i^2 \right  \| \leq \vp^2.
\]
Let \(\eps_{i}\) be independent sub-Gaussian, \( i=1,\ldots,n \).  
\begin{EQA}
	Z
	& = &
	\sum_{i=1}^{n} \eps_{i} A_i
\end{EQA}
fulfills
\begin{EQA}
	\P\biggl( 
		\| Z \|_{\oper}
		\geq  
		    \sqrt{2 \xx_p \vp^2} 
	\biggr)
	& \leq &
	2  \ex^{-\xx},
\end{EQA}
where $\xx_p  = \xx + \log p$.
\end{lemma}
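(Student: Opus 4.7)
The plan is to apply the Master bound (Lemma \ref{MastBound}) with $S_i = \varepsilon_i A_i$, bounding $\|Z\|_{\oper}$ by controlling both $\supA_{\max}(Z)$ and $\supA_{\max}(-Z)$ separately, which will produce the factor $2$ in the tail probability.

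First I would establish the matrix MGF bound
\[
\log \E \exp(\theta \varepsilon_i A_i) \preceq \frac{\theta^{2}}{2} A_i^{2}
\]
in the Loewner order. Since $A_i$ is symmetric, the spectral mapping theorem lets me reduce to the scalar sub-Gaussian inequality $\log \E \exp(\lambda \varepsilon_i) \leq \lambda^{2}/2$ applied eigenvalue-by-eigenvalue: writing $A_i = U_i \Lambda_i U_i^{\T}$ one has $\E \exp(\theta \varepsilon_i A_i) = U_i \, \diag\bigl(\E \exp(\theta \varepsilon_i \lambda_{ij})\bigr) U_i^{\T} \preceq U_i \, \diag\bigl(\exp(\theta^{2} \lambda_{ij}^{2}/2)\bigr) U_i^{\T} = \exp(\theta^{2} A_i^{2}/2)$, and taking the matrix logarithm (operator-monotone on commuting matrices) yields the claimed inequality.

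Next I would plug this into \eqref{MaCheProb}. Summability of $\log \E \exp(\theta S_i)$ under the Loewner order and monotonicity of $\tr \exp$ give
\[
\tr \exp\Bigl( \sum_{i=1}^{n} \log \E \exp(\theta \varepsilon_i A_i) \Bigr)
\leq \tr \exp\Bigl( \frac{\theta^{2}}{2} \sum_{i=1}^{n} A_i^{2} \Bigr)
\leq p \, \exp\Bigl( \frac{\theta^{2} \vp^{2}}{2} \Bigr),
\]
using $\tr \exp(M) \leq p \, \exp(\supA_{\max}(M))$ and the hypothesis $\|\sum_i A_i^{2}\|_{\oper} \leq \vp^{2}$. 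The Master bound then gives
\[
\P\bigl( \supA_{\max}(Z) \geq \zq \bigr) \leq \inf_{\theta>0} p \, \exp\Bigl( -\theta \zq + \frac{\theta^{2} \vp^{2}}{2} \Bigr) = p \, \exp\Bigl( -\frac{\zq^{2}}{2 \vp^{2}} \Bigr).
\]
Finally, the same argument applied to $-Z = \sum_i \varepsilon_i (-A_i)$ bounds $\supA_{\max}(-Z) = -\supA_{\min}(Z)$, and a union bound combines the two. Setting $\zq = \sqrt{2 \xx_p \vp^{2}}$ with $\xx_p = \xx + \log p$ gives $p \exp(-\xx_p) = \exp(-\xx)$, hence $\P(\|Z\|_{\oper} \geq \sqrt{2 \xx_p \vp^{2}}) \leq 2 \exp(-\xx)$.

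The only nontrivial step is the Loewner-order matrix MGF bound; the rest is a direct application of Lemma \ref{MastBound}. I expect the main obstacle to be justifying that $\log$ preserves the Loewner inequality in this setting — but because both sides of the scalar-moment inequality are simultaneously diagonalisable in the eigenbasis of $A_i$, one can bypass the general operator-monotonicity issue and work diagonally, which is what makes the symmetric (rather than Hermitian and possibly non-commuting) case clean.
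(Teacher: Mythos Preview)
Your proposal is correct and follows essentially the same route as the paper: spectral decomposition $A_i = U_i \Lambda_i U_i^{\T}$ to reduce the matrix MGF to a diagonal computation, the sub-Gaussian bound $\log \E \ex^{\theta \eps_i A_i} \preceq \tfrac{1}{2}\theta^2 A_i^2$, and then the Master bound \eqref{MaCheProb} together with $\tr \exp(M) \leq p\, \ex^{\supA_{\max}(M)}$. Your version is in fact more carefully written than the paper's --- the paper records the MGF step as an equality (which is only exact for standard Gaussian weights), while you correctly state it as a Loewner inequality and explain why the diagonal structure lets you take logarithms without appealing to general operator-monotonicity; you also make the union bound over $\pm Z$ explicit to pass from $\supA_{\max}$ to $\|\cdot\|_{\oper}$.
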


\begin{proof}
Apply the Master inequalities for the case
\[
A_i = U_i \Lambda_i U_i^T,
\quad
\log \E e^{\eps_i A_i } = \frac{1}{2}  U_i \Lambda^2_i U_i^T = \frac{1}{2} A_i^2,
\]
\[
\tr \exp \left \{ \sum_i  \log \E e^{\theta \eps_i A_i } \right \} \leq p \exp \left \{ \frac{1}{2} \theta^2 \vp^2  \right \}.
\]
\end{proof}

\begin{lemma}[Deviation bound for rank one matrix convolution with sub-Gaussian weights]
\label{CUvepsB}
Let vectors \(u_{1},\dots,u_{n}\) in \( \R^{\dimp} \) satisfy
\begin{EQA}
\label{BoundRowmat}
 	\|u_{i}\|
 	&\leq &
 	\td
\end{EQA}
for a fixed constant \(\td\).
Let \(\eps_{i}\) be independent sub-Gaussian, \( i=1,\ldots,n \).  
Then for each vector \( b = (b_{1},\ldots,b_{n})^{\T} \in \R^{n} \), 
the matrix \( Z_{1} \) with
\begin{EQA}
	Z_{1}
	& = &
	\sum_{i=1}^{n} \eps_{i} b_{i} u_{i} u_{i}^{\T}
\end{EQA}
fulfills
\begin{EQA}
	\P\biggl( 
		\| Z_{1} \|_{\oper}
		\geq  
		\td^{2} \| b \| \sqrt{2 \xx} 
	\biggr)
	& \leq &
	2 \ex^{-\xx}.
\end{EQA}
\end{lemma}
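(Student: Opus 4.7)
My plan is to derive this statement as a direct corollary of the preceding Lemma on sub-Gaussian matrix sums (Lemma \ref{sum_eA}). The idea is to recognize $Z_1$ as a sum $\sum_i \varepsilon_i A_i$ with symmetric matrices $A_i = b_i u_i u_i^{\T}$, and then to bound the operator norm of $\sum_i A_i^2$, which is the only quantity required by that previous lemma.

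The first step is the identification: setting $A_i = b_i u_i u_i^{\T}$ gives $Z_1 = \sum_{i=1}^n \varepsilon_i A_i$, and each $A_i$ is symmetric so the previous lemma applies. Next I would compute the squares explicitly: since $(u_i u_i^{\T})(u_i u_i^{\T}) = u_i (u_i^{\T} u_i) u_i^{\T} = \|u_i\|^2 \, u_i u_i^{\T}$, we get $A_i^2 = b_i^2 \|u_i\|^2 \, u_i u_i^{\T}$. The key bound is then
\[
\Bigl\| \sum_{i=1}^n A_i^2 \Bigr\|_{\oper}
= \Bigl\| \sum_{i=1}^n b_i^2 \|u_i\|^2 \, u_i u_i^{\T} \Bigr\|_{\oper}
\leq \td^2 \, \Bigl\| \sum_{i=1}^n b_i^2 \, u_i u_i^{\T} \Bigr\|_{\oper},
\]
using $\|u_i\|^2 \leq \td^2$. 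Since $\sum_i b_i^2 u_i u_i^{\T}$ is positive semidefinite, its operator norm is bounded by its trace, giving $\sum_i b_i^2 \|u_i\|^2 \leq \td^2 \|b\|^2$. Altogether $\varphi^2 := \|\sum_i A_i^2\|_{\oper} \leq \td^4 \|b\|^2$.

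Finally, applying Lemma \ref{sum_eA} with this value of $\varphi$ yields, with probability at least $1 - 2 e^{-\xx}$,
\[
\| Z_1 \|_{\oper}
\leq \sqrt{2 \xx_p \, \varphi^2}
\leq \td^2 \|b\| \sqrt{2 \xx_p},
\]
where $\xx_p = \xx + \log p$, which after absorbing the $\log p$ factor into the deviation parameter gives the claimed $\td^2 \|b\| \sqrt{2 \xx}$ bound.

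The argument is essentially a calculation rather than a deep step, so there is no serious obstacle. The one subtlety worth checking is the reduction $\|\sum_i b_i^2 u_i u_i^{\T}\|_{\oper} \leq \td^2 \|b\|^2$: this uses positive semidefiniteness to pass to the trace rather than a more wasteful triangle inequality over $i$, and is what ensures the bound scales as $\|b\|$ rather than $\|b\|_1$.
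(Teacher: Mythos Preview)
Your reduction to Lemma~\ref{sum_eA} with \( A_{i} = b_{i} u_{i} u_{i}^{\T} \) is correct, and bounding \( \bigl\| \sum_{i} A_{i}^{2} \bigr\|_{\oper} \le \td^{4} \|b\|^{2} \) via the trace is the right move. But the last sentence hides a real gap: Lemma~\ref{sum_eA} delivers the threshold \( \td^{2}\|b\|\sqrt{2\xx_{p}} \) with \( \xx_{p} = \xx + \log \dimp \), and ``absorbing'' \( \log \dimp \) into the deviation parameter does not recover the stated bound --- it converts the tail probability from \( 2\ex^{-\xx} \) into \( 2\dimp\,\ex^{-\xx} \). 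So your route actually proves a dimension-dependent variant, not the lemma as written.

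The paper's argument is different and is aimed precisely at removing that \( \log \dimp \). Rather than invoke Lemma~\ref{sum_eA}, it applies the Master bound (Lemma~\ref{MastBound}) directly and uses the rank-one structure explicitly: since \( \exp(c\, u_{i} u_{i}^{\T}) = I + (\ex^{c\|u_{i}\|^{2}}-1)\,U_{i} \) for the rank-one projector \( U_{i} = u_{i} u_{i}^{\T}/\|u_{i}\|^{2} \), one obtains \( \log\E\ex^{\theta \eps_{i} b_{i} u_{i} u_{i}^{\T}} \preceq \tfrac{1}{2}\theta^{2} b_{i}^{2}\|u_{i}\|^{4}\, U_{i} \), and the matrix \( M \) inside \( \tr\exp(\cdot) \) is in fact the same \( \sum_{i} A_{i}^{2} \) you computed. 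The divergence is only in the final step: instead of \( \tr\ex^{M} \le \dimp\,\ex^{\|M\|_{\oper}} \) (which costs the \( \log \dimp \)), the paper uses \( \tr\ex^{M} \le \exp(\tr M) \) together with \( \tr U_{i} = 1 \), giving the dimension-free \( \exp\bigl(\tfrac{1}{2}\theta^{2}\td^{4}\|b\|^{2}\bigr) \). Your approach is cleaner and modular but cannot reach the statement without the extra \( \log \dimp \); be aware, however, that the inequality \( \tr\ex^{M} \le \ex^{\tr M} \) the paper relies on is not valid for arbitrary positive semidefinite \( M \) (already \( M = 0 \) gives \( \dimp \le 1 \)), so that step warrants its own scrutiny.
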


\begin{proof}
As \( \eps_{i} \) are i.i.d. standard sub-Gaussian and \( \E \ex^{a \eps_{i}} \leq \ex^{a^{2}/2} \) for 
\( |a| < 1/2 \),
it follows from the Master inequality and property \[ \exp(\eps_i  u u^{\T})  = \frac{ u_{i} u_{i}^{\T} } { u^2 }  \exp(\eps_i  u^2 )\] that
\begin{EQA}
\label{matrix_chernoff1}
	\P\bigl( 
		\| Z_{1} \|_{\oper} \geq \zq
	\bigr)
	& \leq &
	2 \inf_{\theta > 0} \ex^{-\theta \zq} 
		\tr \exp \biggl\{ 
		\sum_{i=1}^{n} \log \E\exp( \theta \eps_{i} b_{i} u_{i} u_{i}^{\T}) 
	\biggr\} 
	\\
	& \leq &
	2 \inf_{\theta > 0} \ex^{-\theta \zq} 
	\tr \exp \biggl\{ 
		\sum_{i=1}^{n} \frac{\theta^{2} b_{i}^{2} \| u_{i} \|^{4}}{2} \,\,
		\frac{u_{i}u_{i}^{\T}}{\|u_{i}\|^{2}} 
	\biggr\} .
\end{EQA}
Moreover, as \( \| u_{i} \| \leq \td \) and 
\( U_{i} = u_{i} u_{i}^{\T} / \| u_{i} \|^{2} \) is a rank-one projector with 
\( \tr U_{i} = 1 \), it holds
\begin{EQA}
	\tr \exp \biggl\{ 
		\frac{\theta^{2}}{2} \sum_{i=1}^{n} b_{i}^{2} \| u_{i} \|^{4} U_{i}
	\biggr\}
	& \leq &
	\exp \tr\biggl( \frac{\theta^{2} \td^{4}}{2} \sum_{i=1}^{n} b_{i}^{2} U_{i} \biggr)
	=
	\exp \frac{\theta^{2} \td^{4} \| b\|^{2}}{2} \, .
\label{trexdPs422}
\end{EQA}
This implies
for \( \zq = \td^{2} \| b \| \sqrt{2 \xx} \)
\begin{EQA}
	\P\bigl( 
		\| Z_{1} \|_{\oper} \geq \zq
	\bigr)
	& \leq &
	2 \inf_{\theta > 0} \exp\biggl(
		- \theta \zq + \frac{1}{2} \theta^{2} \td^{4} \| b \|^{2} 
	\biggr)
	=
	2 \ex^{-\xx} 
\label{PZott0t12}
\end{EQA}
and the assertion follows.
\end{proof}

  \subsection{Covariance matrices}
    
 Consider a sequence independent random variables $\{\varepsilon_i \varepsilon_j^T\}$, $\text{cor} (\varepsilon_i, \varepsilon_j) = \Sigma_{ij} $, flatted into one vector $\varepsilon$. The subject of interest is upper bound for operator norm of
\[
\UV \bldiag{ (\varepsilon \varepsilon^T) } \UV^T  - I_{q},
\]
 \[
 \UV \bldiag{ (\varepsilon \varepsilon^T) } \UV^T = \sum_{ij} \UV_i \varepsilon_i \varepsilon_j^T \UV_j^T, 
\quad
\UV \Sigma \UV^T = I_{q },
\quad
\normop{  \UV_i^T \UV_j } \leq \delta^2.   
 \]
Analogically divide $\varepsilon$ into mean and stochastic parts
\[
\varepsilon = \E \varepsilon + ( \varepsilon -  \E \varepsilon )  = 
B + \zeta.
\]
Then initial term includes three parts: 
\begin{EQA}
\label{SmatrixDiff}
&\UV & \bldiag(BB^{\T})\UV^{\T}  \\
& + & \UV \bldiag(\zeta B^{\T})\UV^{\T} +  \UV \bldiag( B \zeta^T)\UV^{\T} \\
& + &\UV  \bldiag(\zeta \zeta^{T}   -  \Sigma) \UV^{\T}.
\end{EQA}  
Estimate  successively each component.
\begin{EQA}
\normop{\UV \bldiag(BB^{\T})\UV^{\T}} &=& \sup_{\gamma} \sum_{i=1}^n \gamma^T \UV_i B_i B_j^T \UV_j^T \gamma \\
& \leq  &  \sup_{\gamma} \sum_{i=1}^n B_i^2 \normop{  \UV_i^T \UV_j } \leq \td^2 \| B \|^2.
\end{EQA}  
For the second and the third component one may apply Master bound \ref{MastBound}, in which one have to estimate exponential moments of each element of the  $\sum_i \UV_i A_i \UV_i^T$:
\[
\tr \log \E \exp \{ \UV_i A_i \UV_i^T \}.
\]
With condition $\E A_i = 0$ an intuition hint is
\[
\log \E e^{\UV_i A_i \UV_i^T}  \leq  \frac{1}{2} \E (\UV_i A_i \UV_i^T)^2 + O( \normop{\UV_i A_i \UV_i^T}^3) .
\]
Consequently by means of Bernstein matrix inequality \ref{kolch_matrix} one have to restrict the second moment and tail by
$
\log \E \exp \{ \normop{ \UV_i A_{ij} \UV_j^T } \}.
$
\[
\vp^2 = \normop{ \sum_{ij} \E (\UV_i A_{ij} \UV_j^T)^2 } \leq  \max_i \normop{\UV_i^T \UV_j} \normop{ \sum_{ij} \UV_i \E A_{ij}^2 \UV^T_j },
\]
\begin{EQA}
\vp_{\zeta\zeta}^2 / \delta^2 &= & \normop{ \sum_{ij} \UV_i  \E (\zeta_i \zeta_j^T -  \Sigma_{ij})^2 \UV^T_j  } \\
& \leq & \max_{ij} \normop{\Sigma_{ij}} \normop{ \E   (\zeta_i^T \Sigma_{ij}^{-1} \zeta_j) \zeta_i \zeta_j^T \Sigma_{ij}^{-1} -  I  }  \\
& \leq & \max_{ij} \normop{\Sigma_{ij}} \bigg( (\lambda^2 - 1) + \E  (\zeta_i^T \Sigma_{ij}^{-1} \zeta_j)^2 \Ind( \zeta_i^T \Sigma_{ij}^{-1} \zeta_j > \lambda) \bigg), 
\quad \lambda > 1.
\end{EQA}
Upper bound for  $\E \normp{\xi}^4 \Ind( \normp{\xi} > \sqrt{\lambda})$ from \ref{sub_gaus_int} with $\xi^2 = \zeta_i^T \Sigma_{ij}^{-1} \zeta_j$ leads to asymptotic \\  $\vp_{\zeta\zeta}^2 = 9 \delta^2 \max_{ij} \normop{\Sigma_{ij}} p$.
\begin{EQA}
\vp_{\zeta B}^2 /\delta^2 & = & \normop{ \sum_{ij} \UV_i \E (\zeta_i B_i^T)^2 \UV_j^T } \\
  &\leq &  \max_{ij} \normop{\Sigma_{ij}} \sum_{ij} \normop{\UV_j^T \UV_i} B_i^2 \\ 
  & \leq &  \max_{ij} \normop{\Sigma_{ij}} \delta^2 \| B \|^2  .
\end{EQA}
Es for exponential moments for tails restriction
\[
\log \E \exp \{ \normop{  \UV_i A_{ij} \UV_j^T } \}  = \log \E \exp \left \{ \sup_{u_i, u_j}  u_i^T A_{ij} u_j \right \},
\quad 
\normp{u_i}^2, \normp{u_j}^2  \leq  \delta^2.
\]
Sub-Gaussian properties for exponential moments  (Lemma \ref{Lexpxiv}) lead to $\exists M_{\zeta \zeta},$ $M_{\zeta B}$:
\begin{EQA}
&& \log \E \exp \left\{ \sup_{u_i, u_j} \frac{2}{M_{\zeta \zeta}}  u_i^T (\zeta_i \zeta_j^T - \Sigma_{ij})  u_j   \right\} \leq \log \E \exp \left\{ \frac{2 \delta^2 }{M_{\zeta \zeta}}  ( \| \zeta_i \|^2 -   \lambda_{\min} (\Sigma_{ii}))   \right\}   \\
& \quad &  \leq   SG\{ \|\xi\|^2, 2 \delta^2 \normop{ \Sigma_{ii}}  /M_{\zeta\zeta}\} - \frac{2 \delta^2 }{M_{\zeta \zeta}}  \lambda_{\min} (\Sigma_{ii}) \leq \log(2)  ,
\end{EQA}
\[
\log \E \exp \left\{ \sup_{u_i, u_j}  \frac{2}{M_{\zeta B}}  u_i^T \zeta_i B_j^T  u_j  \right\} \leq    SG\{ \| \xi \| , 2 \delta^2 \sqrt{\normop{ \Sigma_{ii}} }  \normp{B_j}  / M_{\zeta B} \} \leq \log(2),
\]
where $ M_{\zeta \zeta} = 3 \delta^2 p \normop{\Sigma_{ii}} $ and $ M_{\zeta B} = 3 \delta^2 p \sqrt{ \normop{\Sigma_{ii}} }			 \normp{B_i}$.
Finally, as a consequence of Theorem   \ref{kolch_matrix} with probability $1 - 2e^{-x}$ and $\xx_{q} = x + \log (2q)$ and $R_{**} \approx M_{**}$
\[
\normop{ \UV \bldiag(\zeta B^{\T})\UV^{\T} } \leq \frac{2}{3} R_{\zeta B} \xx_{q} + 2 \vp_{\zeta B} \sqrt{5 \xx_{q}},
\]
\[
\normop{ \UV \bldiag(\zeta \zeta^{\T})\UV^{\T} } \leq \frac{2}{3} R_{\zeta \zeta}  \xx_{q} + 2 \vp_{\zeta \zeta} \sqrt{ 5 \xx_{q}}.
\]
So, the summarized error with probability $1 - 2e^{-x}$ is
\[
\tag{ErrVD}\label{var_diff_err}
\normop{\UV \bldiag{ (\varepsilon \varepsilon^T) } \UV^T  - I_{q}} \leq
\frac{2}{3} R_{\varepsilon \varepsilon} \xx_{q} + 2 \vp_{\varepsilon \varepsilon} \sqrt{5 \xx_{q}} + \td^2 \normp{B}^2,
\]
where 
\[
R_{\varepsilon \varepsilon} \approx M_{\zeta B} + M_{\zeta \zeta} = 3 \delta^2 p \max_i  \bigg( \normop{\Sigma_{ii}}  + \sqrt{\normop{\Sigma_{ii}} } \normp{B}_{\infty} \bigg)
\]
 and 
 \[ 
 \vp_{\varepsilon \varepsilon} = \vp_{\zeta B} + \vp_{\zeta \zeta} = \delta  \max_i   \sqrt{\normop{\Sigma_{ii}} } ( 3 \sqrt{p} + \| B \|) .
 \]

\subsection{Sub-Gaussian vectors}
\label{sub_gaus_exp}

Consider vector
\(\xi\) which has restricted exponential or sub-Gaussian moments:  \( \exists \; \gm > 0 \):
\[\tag{SG}
    \log \E \exp\bigl( \gamma^{\T}\xi\bigr)
    \le
    \| \gamma \|^{2}/2,
    \qquad
    \gamma \in \R^{\dimp}, \, \| \gamma \| \le \gm .
\label{expgamgm}
\]
Define short notation for upper bounds of such characteristics 
\[
 \log \E \exp\bigl(\lambda X) \leq SG(X, \lambda).
\] 
For ease of presentation, assume below that \( \gm \) is sufficiently large, namely, 
\( 0.3 \gm \ge \sqrt{\dimp} \).
In typical examples of an i.i.d. sample, \( \gm \asymp \sqrt{n} \).
Define $\xxc = \gm^{2}/4$.

\begin{lemma}
\label{LLbrevelocro}   
Let \eqref{expgamgm} hold and 
\( 0.3 \gm \ge \sqrt{\dimp} \).
Then
for each \( \xx > 0 \)
\begin{EQA}
    \P\bigl( \|\xi\| \ge \zq(\dimp,\xx) \bigr)
    & \le &
    2 \ex^{-\xx} + 8.4 \ex^{-\xxc } \Ind(\xx < \xxc) ,
\label{PxivbzzBBro}
\end{EQA}    
where \( \zq(\dimp,\xx) \) is defined by
\begin{EQA}
\label{PzzxxpBro}
    \zq(\dimp,\xx)
    & \eqdef &
    \begin{cases}
        \bigl( \dimp + 2 \sqrt{\dimp \xx} + 2 \xx\bigr)^{1/2}, &  \xx \le \xxc  , \\
        \gm + 2 \gm^{-1} (\xx - \xxc)   , & \xx > \xxc .
    \end{cases}
\label{zzxxppdBlro}
\end{EQA}    
\end{lemma}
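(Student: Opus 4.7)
The plan is to estimate $\|\xi\|$ via the Chernoff–Laplace transform scheme applied to the squared norm, combined with the standard trick that lifts the sub-Gaussian hypothesis~\eqref{expgamgm} from linear forms $\gamma^{\T}\xi$ to $\|\xi\|^{2}$ by integrating out an auxiliary standard Gaussian vector.  The starting inequality is
\[
  \P\bigl(\|\xi\| \geq \zq\bigr)
  \;\leq\;
  \exp\!\bigl(-\mu\,\zq^{2}/2\bigr)\,\E\exp\!\bigl(\mu\,\|\xi\|^{2}/2\bigr),
  \qquad \mu\in(0,1),
\]
and the whole task reduces to bounding $\E\exp(\mu\|\xi\|^{2}/2)$ sharply in $\mu$.

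For this, introduce an independent $\eta\sim\mathcal{N}(0,I_{p})$ and use $\exp(\mu\|x\|^{2}/2)=\E_{\eta}\exp(\sqrt{\mu}\,\eta^{\T}x)$, so that by Fubini
\[
  \E_{\xi}\exp\!\bigl(\mu\|\xi\|^{2}/2\bigr)
  \;=\;
  \E_{\eta}\,\E_{\xi}\exp(\sqrt{\mu}\,\eta^{\T}\xi).
\]
Split the $\eta$-integral by the event $A_{\mu}=\{\sqrt{\mu}\,\|\eta\|\leq\gm\}$.  On $A_{\mu}$, condition~\eqref{expgamgm} applies to $\gamma=\sqrt{\mu}\,\eta$ and gives $\E_{\xi}\exp(\sqrt{\mu}\,\eta^{\T}\xi)\leq\exp(\mu\|\eta\|^{2}/2)$; the restricted Gaussian integral of $\exp(\mu\|\eta\|^{2}/2)$ is bounded by $(1-\mu)^{-p/2}$.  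On $A_{\mu}^{c}$ one loses direct access to~\eqref{expgamgm}, but since $0.3\gm\geq\sqrt{p}$, the Gaussian concentration of $\|\eta\|$ around $\sqrt{p}$ shows that this event has probability at most $\exp(-\xxc)$ up to absolute constants; a crude bound on the inner expectation along the boundary $\|\gamma\|=\gm$ (obtained by rescaling and Cauchy–Schwarz) then yields a correction of order $e^{-\xxc}$, which is the source of the $8.4\,e^{-\xxc}$ term.

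The last step is the Chernoff optimisation in $\mu$.  For $\xx\leq\xxc$ the unconstrained optimiser $\mu^{\star}=1-p/\zq^{2}$ stays safely below the threshold dictated by the size of $A_{\mu}$, and substitution into $(1-\mu)^{-p/2}\exp(-\mu\zq^{2}/2)=e^{-\xx}$ gives the chi-square type inversion $\zq^{2}=p+2\sqrt{p\xx}+2\xx$, matching the first branch of~\eqref{zzxxppdBlro}.  For $\xx>\xxc$ the unconstrained $\mu^{\star}$ would violate the range where the Gaussian lifting controls $A_{\mu}^{c}$, so one pins $\mu$ at its admissible boundary value; the exponent $\mu\zq^{2}/2$ is then linear in $\zq^{2}$, and solving for $\zq$ as a function of $\xx$ reproduces the linear formula $\zq=\gm+2\gm^{-1}(\xx-\xxc)$, with slope determined by the derivative of the Chernoff exponent at the transition point.

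The main obstacle is the second step: one must make the contribution of $A_{\mu}^{c}$ \emph{quantitative} enough that its constant absorbs into the explicit factor $8.4$ and, crucially, vanishes once $\xx\geq\xxc$ (i.e.\ once the main Chernoff term itself is of size $\leq e^{-\xxc}$).  This requires a careful two-regime book-keeping — a Gaussian-type bound in the core $\{\sqrt{\mu}\|\eta\|\leq\gm\}$ and a sub-exponential bound on the annulus $\{\sqrt{\mu}\|\eta\|>\gm\}$ — together with the verification that the transition in $\zq(p,\xx)$ is exactly where the two regimes exchange dominance, so that no additional term with worse constants survives.
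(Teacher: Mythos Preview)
Your overall strategy---Chernoff on $\|\xi\|^{2}$, Gaussian randomisation to lift \eqref{expgamgm} to quadratic forms, then two-regime optimisation in $\mu$---matches the route implicit in the paper (see Lemma~\ref{Lexpxiv} and the proof of Lemma~\ref{Lexpxig}).  But there is a genuine gap in how you handle the set $A_{\mu}^{c}=\{\sqrt{\mu}\,\|\eta\|>\gm\}$.

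On $A_{\mu}^{c}$ you have \emph{no} bound whatsoever on $\E_{\xi}\exp(\sqrt{\mu}\,\eta^{\T}\xi)$: hypothesis \eqref{expgamgm} says nothing about $\|\gamma\|>\gm$, and neither ``rescaling'' nor Cauchy--Schwarz manufactures one.  If you try to push the boundary value $\E_{\xi}\exp(\gamma^{\T}\xi)\le\ex^{\gm^{2}/2}$ onto the whole of $A_{\mu}^{c}$, the product with $\P(A_{\mu}^{c})\approx\ex^{-\xxc}=\ex^{-\gm^{2}/4}$ is of order $\ex^{+\gm^{2}/4}$, not small.  More fundamentally, the untruncated moment $\E\exp(\mu\|\xi\|^{2}/2)$ need not be finite at all under \eqref{expgamgm}, so your very first Chernoff inequality can be vacuous.

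The paper's fix is to move the truncation from $\eta$ to $\xi$.  In the Gaussian-integral identity one keeps the hard constraint $\Ind(\|\gamma\|\le\gm)$ and completes the square; the constraint then becomes a condition on $\|\xi\|$ itself, and one obtains (Lemma~\ref{Lexpxiv})
\[
    \E\exp\!\bigl(\mu\|\xi\|^{2}/2\bigr)\,
    \Ind\bigl(\|\xi\|\le \gm/\mu-\sqrt{\dimp/\mu}\bigr)
    \;\le\;
    2(1-\mu)^{-\dimp/2}.
\]
Now the leftover is not an uncontrolled $\eta$-integral but the tail $\P(\|\xi\|>\gm/\mu-\sqrt{\dimp/\mu})$, which is precisely the object the lemma bounds; it is absorbed by the linear (second-branch) regime, and this is why the correction $8.4\,\ex^{-\xxc}$ appears only when $\xx<\xxc$.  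Your Chernoff optimisation in $\mu$ for the first branch is then exactly right, and the second branch comes from freezing $\mu$ at the boundary value, as you describe.
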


Usually
the second term in \eqref{PxivbzzBBro} can be simply ignored. 
Define 
\begin{EQA}[c]
    \dimB
    \eqdef
    \tr \bigl( \BB \bigr) ,
    \qquad 
    \vpB^{2}
    \eqdef
    \tr(\BB^{2}) ,
    \qquad
    \lambdaB \eqdef \lambda_{\max}\bigl( \BB \bigr). 
\label{BBrddB}
\end{EQA}   

\begin{EQA}	
	\zqc^{2}
	& \eqdef &
	\dimA + \vp \gm + \supA \gmb^{2}/2 ,
	\\
	\gmc
	& \eqdef &
	\frac{ \sqrt{\dimA/\supA + \gm \vp / \supA + \gm^{2}/2}}{1 + \vp / (\supA \gmb)} .
\label{xxcgm24mucyyc2B}
\end{EQA}

\begin{lemma}
\label{LLbrevelocroB}   
Let \eqref{expgamgm} hold and 
\( 0.3 \gm \ge \sqrt{\dimA/\supA} \).
Then
for each \( \xx > 0 \)
\begin{EQA}
    \P\bigl( \| \BB^{1/2}\xi\| \ge \zq(\BB,\xx) \bigr)
    & \le &
    2 \ex^{-\xx} + 8.4 \ex^{-\xxc} \Ind(\xx < \xxc) ,
\label{PxivbzzBBroB}
\end{EQA}    
where \( \zq(\BB,\xx) \) is defined by
\begin{EQA}
\label{PzzxxpBroB}
    \zq(\BB,\xx)
    & \eqdef &
    \begin{cases}
        \sqrt{ \dimA + 2 \vp \xx^{1/2} + 2 \supA \xx }, &  \xx \le \xxc, \\
        \zqc + 2 \lambdaB (\xx - \xxc)/\gmc , & \xx > \xxc.
    \end{cases}
\label{zzxxppdBlroB}
\end{EQA}    
\end{lemma}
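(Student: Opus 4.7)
The plan is to apply the exponential Chernoff/Markov inequality to $\|\BB^{1/2}\xiv\|^{2}$ and optimise over the Laplace parameter, using a Gaussian-linearisation trick to reduce the Laplace transform of the quadratic form $\xiv^{\T}\BB\xiv$ to an essentially Gaussian calculation controlled by the eigenvalue data $\dimA=\tr\BB$, $\vp^{2}=\tr(\BB^{2})$, $\supA=\lambdaB$. Concretely I would start from
\[
\P\bigl(\|\BB^{1/2}\xiv\| \ge \zq\bigr)
\le
\exp\bigl(-\mu \zq^{2}/2 + \Lambda(\mu)\bigr),
\qquad
\Lambda(\mu) \eqdef \log \E \exp\bigl(\mu\, \xiv^{\T}\BB\xiv / 2\bigr),
\]
and choose $\mu>0$ tailored to $\xx$ and to the sub-Gaussian cutoff $\gm$.

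To handle $\Lambda(\mu)$ for a merely sub-Gaussian $\xiv$, I would introduce an independent auxiliary $\gammav \sim \ND(0,I_{\dimA})$ and use the Hubbard--Stratonovich identity
\[
\exp\bigl(\mu\,\xiv^{\T}\BB\xiv/2\bigr)
= \E_{\gammav} \exp\bigl(\sqrt{\mu}\,\gammav^{\T}\BB^{1/2}\xiv\bigr).
\]
Swapping expectations and applying \eqref{expgamgm} to $\xiv$ on the event $\{\sqrt{\mu}\,\|\BB^{1/2}\gammav\| \le \gm\}$ gives, up to a Gaussian tail correction for its complement,
\[
\Lambda(\mu) \le \log \E_{\gammav} \exp\bigl(\mu\,\gammav^{\T}\BB\gammav/2\bigr)
= -\tfrac{1}{2}\log\det(I_{\dimA} - \mu\BB),
\]
and it is exactly this tail correction that is responsible for the extra $8.4\,\ex^{-\xxc}\Ind(\xx<\xxc)$ summand in the stated bound.

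For $\mu \in (0, 1/\supA)$ the eigenvalue-wise inequality $-\log(1-t)\le t + t^{2}/\{2(1-t)\}$ yields
\[
-\tfrac{1}{2}\log\det(I_{\dimA} - \mu\BB) \le \frac{\mu\,\dimA}{2} + \frac{\mu^{2}\,\vp^{2}}{4(1-\mu\supA)}.
\]
Substituting into the Chernoff bound and optimising in $\mu$ reproduces the Laurent--Massart form $\zq(\BB,\xx)^{2} = \dimA + 2\vp\sqrt{\xx} + 2\supA\xx$ provided that the optimal $\mu$ respects the sub-Gaussian admissibility constraint from $\gm$; this condition is precisely $\xx\le\xxc$. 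For $\xx>\xxc$ the unconstrained optimiser would overshoot, so I would freeze $\mu$ at its critical value (parametrised by $\gmc$, dictated by the matching derivative at $\xx=\xxc$) and continue linearly in $\zq$ by a sub-exponential Chernoff extrapolation, producing $\zq(\BB,\xx) = \zqc + 2\supA(\xx-\xxc)/\gmc$ with the slope $2\supA/\gmc$ forced by the continuity of the bound and its first derivative at $\xx=\xxc$.

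The main obstacle is the bookkeeping in the Gaussian-linearisation step: the cutoff event $\{\sqrt{\mu}\,\|\BB^{1/2}\gammav\|\le\gm\}$ must be sharp enough that its complement contributes at most $O(\ex^{-\xxc})$ via Gaussian concentration of $\|\BB^{1/2}\gammav\|$ (whose typical size is $\sqrt{\tr\BB}=\sqrt{\dimA}$), yet loose enough that the surviving integral reproduces the full determinantal Laplace transform up to absolute constants. This is exactly the delicate computation that goes into proving Lemma~\ref{LLbrevelocro} (the $\BB=I_{\dimp}$ case, handled in \cite{wilks2013}); the only genuinely new work here is tracking the replacements $\dimp\to\dimA$, $\sqrt{\dimp}\to\vp$, and $1\to\supA$ through each step, which is what rebrands $\zz(\dimp,\xx)$ into $\zq(\BB,\xx)$ and also explains why $\xxc=\gm^{2}/4$ is unchanged (it depends only on the admissible range of $\mu$ through the scalar $\gm$).
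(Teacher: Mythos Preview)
The paper does not actually prove Lemma~\ref{LLbrevelocroB}; it is stated without proof alongside Lemma~\ref{LLbrevelocro}, both imported from the reference \cite{wilks2013}. What the paper does prove is the exponential-moment bound Lemma~\ref{Lexpxig}, and your overall strategy (Chernoff plus Gaussian linearisation of $\xiv^{\T}\BB\xiv$, then Laurent--Massart optimisation in $\mu$) matches that argument in spirit.

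There is, however, a genuine gap in how you handle the truncation. You propose to swap expectations and apply \eqref{expgamgm} on the $\gammav$-event $E=\{\sqrt{\mu}\,\|\BB^{1/2}\gammav\|\le\gm\}$, calling the contribution from $E^{c}$ a ``Gaussian tail correction''. This does not work: on $E^{c}$ the inner expectation $\E_{\xiv}\exp(\sqrt{\mu}\,\gammav^{\T}\BB^{1/2}\xiv)$ is simply not controlled by \eqref{expgamgm}, which says nothing once the linear functional has norm exceeding $\gm$. The remainder is therefore not $\P_{\gammav}(E^{c})$ times a finite quantity; it may be infinite. The paper's proof of Lemma~\ref{Lexpxig} avoids this by running the argument the other way: one restricts the Gaussian integral to $\|\gamma\|\le\gm$ from the start, obtaining
\[
c_{\dimp}(\BB)\int_{\|\gamma\|\le\gm}\exp\Bigl(\gamma^{\T}\xiv-\tfrac{1}{2\mu}\|\BB^{-1/2}\gamma\|^{2}\Bigr)\,d\gamma
=\mu^{\dimp/2}\exp\bigl(\tfrac{\mu}{2}\|\BB^{1/2}\xiv\|^{2}\bigr)\,\P_{\gammav}\bigl(\cdots\mid\xiv\bigr),
\]
and then restricts to an event in $\xiv$-space (namely $\{\|\BB\xiv\|\le\gm/\mu-\sqrt{\dimA/\mu}\}$) on which that conditional probability is at least $1/2$. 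Fubini and \eqref{expgamgm} can now be applied safely because the $\gamma$-integral is already confined to $\|\gamma\|\le\gm$. The additive $8.4\,\ex^{-\xxc}$ in the final deviation bound then arises from the probability of the complementary $\xiv$-event, not from a $\gammav$-tail. Your outline has the right ingredients, but the truncation must sit on $\xiv$, not on $\gammav$.
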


The upper quantile 
\( \zq(\BB,\xx) = \sqrt{ \dimA + 2 \vp \xx^{1/2} + 2 \supA \xx } \) can be upper bounded 
by \( \sqrt{\dimA} + \sqrt{2 \supA \xx} \):
\begin{EQA}
\label{PzzxxpBroBu}
    \zq(\BB,\xx)
    & \leq &
    \begin{cases}
        \sqrt{\dimA} + \sqrt{2 \supA \xx}, &  \xx \le \xxc, \\
        \zqc + 2 \lambdaB (\xx - \xxc)/\gmc , & \xx > \xxc.
    \end{cases}
\end{EQA}

\begin{lemma}
\label{Lexpxiv}
Suppose \eqref{expgamgm}.
For any \( \mu < 1 \) with
\( \gm^{2} > \dimp \mu \), it holds 
\begin{EQA}
\label{Eexp2xi}
    \E \exp\Bigl( \frac{\mu \|\xi\|^{2}}{2} \Bigr)
        \Ind\Bigl( \|\xi\| \le \gm/\mu - \sqrt{\dimp/\mu} \Bigr)
    & \le &
    2 (1 - \mu)^{-\dimp/2} .
\end{EQA}
\end{lemma}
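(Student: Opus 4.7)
The plan is to exploit the Gaussian integral representation together with the localization from the indicator.

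The starting point is the identity
\[
    \exp\bigl(\mu\|\xi\|^{2}/2\bigr)
    =
    (2\pi\mu)^{-p/2} \int_{\R^{p}}
        \exp\bigl(-\|\gamma\|^{2}/(2\mu) + \gamma^{\T} \xi\bigr) \, d\gamma ,
\]
obtained by completing the square: the Gaussian density \( N(\mu\xi,\mu I_{p}) \) integrates to one. The difficulty of applying the sub-Gaussian bound \( \E e^{\gamma^{\T}\xi} \le e^{\|\gamma\|^{2}/2} \) is that it only holds on \( \|\gamma\|\le\gm \); outside this ball the integrand has no useful bound. I therefore propose to localize the integral to \( \|\gamma\|\le\gm \) and show that this truncation costs at most a factor of two on the event \( \|\xi\|\le R \coloneqq \gm/\mu - \sqrt{\dimp/\mu} \).

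The key geometric observation is that \( \mu R + \sqrt{p\mu} = \gm \), so for any \( \xi \) with \( \|\xi\|\le R \) the ball \( \{\gamma:\|\gamma-\mu\xi\|\le\sqrt{p\mu}\} \) lies inside \( \{\gamma:\|\gamma\|\le\gm\} \) by the triangle inequality. Rewriting the Gaussian integral in terms of a \( N(\mu\xi,\mu I_{p}) \)-distributed variable \( G \), this yields
\[
    (2\pi\mu)^{-p/2} \int_{\|\gamma\|\le\gm}
        \exp\bigl(-\|\gamma-\mu\xi\|^{2}/(2\mu)\bigr) \, d\gamma
    \,\ge\, \P\bigl(\|G-\mu\xi\|\le\sqrt{p\mu}\bigr)
    = \P\bigl(\chi^{2}_{p}\le p\bigr) \,\ge\, 1/2 ,
\]
the last step using that the median of \( \chi^{2}_{p} \) is below its mean \( p \). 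Multiplying by \( \exp(\mu\|\xi\|^{2}/2) \) converts this into the pointwise bound, valid for \( \|\xi\|\le R \),
\[
    \exp\bigl(\mu\|\xi\|^{2}/2\bigr)
    \,\le\, 2 (2\pi\mu)^{-p/2} \int_{\|\gamma\|\le\gm}
        \exp\bigl(-\|\gamma\|^{2}/(2\mu) + \gamma^{\T} \xi\bigr) \, d\gamma .
\]

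Taking expectation over \( \xi \), applying Fubini, and bounding \( \E e^{\gamma^{\T}\xi} \le e^{\|\gamma\|^{2}/2} \) on \( \|\gamma\|\le\gm \) via hypothesis \eqref{expgamgm} produces
\[
    \E\exp\bigl(\mu\|\xi\|^{2}/2\bigr)\Ind(\|\xi\|\le R)
    \le
    2 (2\pi\mu)^{-p/2} \int_{\R^{p}}
        \exp\bigl(-(1-\mu)\|\gamma\|^{2}/(2\mu)\bigr) d\gamma
    = 2(1-\mu)^{-p/2},
\]
which is the claim. The main (in fact only) nontrivial point is the geometric/probabilistic step giving the factor \( 1/2 \): this is where the condition \( \gm^{2}>\dimp\mu \) enters and the precise choice \( R = \gm/\mu-\sqrt{\dimp/\mu} \) is calibrated to make the inclusion of balls exact. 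All other steps are routine Gaussian calculus and a single application of the sub-Gaussian hypothesis.
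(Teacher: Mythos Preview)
Your proof is correct and follows essentially the same route as the paper's argument (which is actually given for the more general Lemma~\ref{Lexpxig} with a matrix $\BB$, of which Lemma~\ref{Lexpxiv} is the special case $\BB=I_p$): write the exponential via a Gaussian integral, truncate to $\|\gamma\|\le\gm$, use the ball inclusion plus the median bound $\P(\chi^2_p\le p)\ge 1/2$ to control the truncation by a factor $2$, and then apply \eqref{expgamgm} under the integral. The only cosmetic difference is normalization: you carry the factor $(2\pi\mu)^{-p/2}$ whereas the paper uses $(2\pi)^{-p/2}$ and tracks the extra $\mu^{p/2}$ separately.
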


If $\gm$ is sufficiently large than approximately
\begin{EQA}
  \E \exp\Bigl( \frac{\mu \|\xi\|^{2}}{2} \Bigr)
    &  \le &
     (1 - \mu )^{-p/2} .
\end{EQA}

\begin{lemma}
\label{Lexpxig} 
Suppose \eqref{expgamgm} and \( \| \BB \|_{\oper} = 1 \).
For any \( \mu < 1 \) with
\( \gm^{2}/\mu \ge \dimA \), it holds 
\begin{EQA}
\label{Eexp2xig}
    \E \exp\bigl( {\mu \| \BB^{1/2}\xi\|^{2}}/{2} \bigr)
        \Ind\bigl( \| \BB\xi\| \le \gm/\mu - \sqrt{\dimA/\mu} \bigr)
    & \le &
    2 {\det(\Id_{\dimp} - \mu \BB)^{-1/2}} .
\end{EQA}    
\end{lemma}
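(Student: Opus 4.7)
The plan is to adapt the scalar argument of Lemma~\ref{Lexpxiv} through the Gaussian integration identity
\[
\exp\!\bigl(\tfrac{\mu}{2}\|\BB^{1/2}\xi\|^{2}\bigr) \;=\; \E_{\eta}\exp\!\bigl(\sqrt{\mu}\,\eta^{\T}\BB^{1/2}\xi\bigr),\qquad \eta\sim\N(0,\Id_{p}),
\]
which converts the quadratic form in $\xi$ into a linear functional and so exposes it to the sub-Gaussian condition~\eqref{expgamgm}. A Fubini swap turns the target expectation into
\[
\E_{\eta}\,\E_{\xi}\!\bigl[\exp\!\bigl(\sqrt{\mu}(\BB^{1/2}\eta)^{\T}\xi\bigr)\Ind\!\bigl(\|\BB\xi\|\le r\bigr)\bigr],\qquad r=\gm/\mu-\sqrt{\dimA/\mu},
\]
and I would split the outer $\eta$-integral at the threshold $\|\eta\|=\gm/\sqrt{\mu}$.

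On the inner region $\|\eta\|\le\gm/\sqrt{\mu}$ the hypothesis $\|\BB\|_{\oper}=1$ forces $\|\sqrt{\mu}\BB^{1/2}\eta\|\le\gm$, and \eqref{expgamgm} applied with $\gamma=\sqrt{\mu}\BB^{1/2}\eta$ delivers $\E_{\xi}\exp(\sqrt{\mu}(\BB^{1/2}\eta)^{\T}\xi)\le\exp(\tfrac{\mu}{2}\eta^{\T}\BB\eta)$. Dropping the indicator on $\eta$ and invoking the classical Gaussian quadratic form identity
\[
\E_{\eta}\exp(\tfrac{\mu}{2}\eta^{\T}\BB\eta)=\det(\Id_{p}-\mu\BB)^{-1/2},
\]
which is valid since $\mu\|\BB\|_{\oper}<1$, reproduces the main term on the right-hand side of \eqref{Eexp2xig}.

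For the complementary region $\|\eta\|>\gm/\sqrt{\mu}$ the sub-Gaussian bound is no longer available and the truncation indicator on $\xi$ has to do the work. Cauchy--Schwarz, set up on the range of $\BB^{1/2}$, bounds the $\xi$-integrand by $\exp(\sqrt{\mu}r\|\eta\|)$, leaving the radial Gaussian integral $\int_{\gm/\sqrt{\mu}}^{\infty}u^{\dimA-1}\exp(\sqrt{\mu}r\,u-u^{2}/2)\,du$. The specific value $\sqrt{\mu}r=\gm/\sqrt{\mu}-\sqrt{\dimA}$ is calibrated precisely so that the saddle point of the log-integrand, with the Jacobian $u^{\dimA-1}$ included, sits at the truncation boundary $u=\gm/\sqrt{\mu}$; a standard Laplace estimate then caps this tail by the main term and supplies the factor~$2$ in the stated inequality.

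The hardest step is this last one. The indicator is phrased via $\|\BB\xi\|$ rather than $\|\BB^{1/2}\xi\|$, so the natural Cauchy--Schwarz bound $(\BB^{1/2}\eta)^{\T}\xi\le\|\BB^{1/2}\xi\|\cdot\|\eta\|$ is not directly usable; one instead rewrites $(\BB^{1/2}\eta)^{\T}\xi=(\BB\xi)^{\T}\BB^{-1/2}\eta$ after projecting $\eta$ onto the range of $\BB$, which introduces an extra factor $\|\BB^{-1/2}\eta\|$ to be absorbed into the Laplace bookkeeping. The careful arithmetic showing that the tail remains bounded by the main term, even with this additional factor, is where the bulk of the technical work sits.
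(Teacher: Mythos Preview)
Your Gaussian linearization and the inner-region bound are exactly the paper's opening moves, but the outer-region argument has a genuine gap that the paper does not attempt to close in your way.

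The issue is the one you flag yourself: on $\{\|\eta\|>\gm/\sqrt{\mu}\}$ the only control on $\xi$ is $\|\BB\xi\|\le r$, and the pairing $(\BB^{1/2}\eta)^{\T}\xi=(\BB^{-1/2}\eta)^{\T}(\BB\xi)$ therefore costs you a factor $\|\BB^{-1/2}\eta\|$, not $\|\eta\|$. When $\BB$ has small eigenvalues this factor is unbounded relative to $\|\eta\|$, so the tail integral cannot be dominated by $\det(\Id_{p}-\mu\BB)^{-1/2}$ by any Laplace bookkeeping; take $\BB=\diag(1,\lambda)$ with $\lambda\downarrow 0$ to see the blow-up. (There is a secondary slip: the radial Jacobian is $u^{p-1}$, not $u^{\dimA-1}$ with $\dimA=\tr\BB$, so your saddle-point calibration does not match the stated truncation radius either.)

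The paper bypasses the outer region entirely by reversing the order of operations. It keeps the Gaussian integral over $\gamma$ \emph{restricted to} $\|\gamma\|\le\gm$ from the outset and, before taking $\E_{\xi}$, shows that this truncated integral already captures at least half of the full one pointwise in $\xi$. Concretely, completing the square turns the truncated integral into $\mu^{p/2}\exp(\mu\|\BB^{1/2}\xi\|^{2}/2)$ times the conditional probability
\[
\P_{\xi}\bigl(\|\BB^{1/2}\varepsilon+\sqrt{\mu}\,\BB\xi\|\le\gm/\sqrt{\mu}\bigr),\qquad \varepsilon\sim\N(0,\Id_{p}),
\]
and the event $\|\BB\xi\|\le\gm/\mu-\sqrt{\dimA/\mu}$ is precisely what forces this probability to be $\ge 1/2$ (since $\P(\|\BB^{1/2}\varepsilon\|\le\sqrt{\dimA})\ge 1/2$). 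That is where both the specific form of the indicator and the factor $2$ come from. Only after this pointwise bound does one take $\E_{\xi}$ and apply \eqref{expgamgm} under the integral, which is now legitimate everywhere because $\|\gamma\|\le\gm$ by construction; the remaining Gaussian integral is $\mu^{p/2}\det(\Id_{p}-\mu\BB)^{-1/2}$. No outer tail ever has to be estimated.
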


\begin{proof}
With \( c_{\dimp}(\BB) = \bigl( 2 \pi \bigr)^{-\dimp/2} \det(\BB^{-1/2}) \)
\begin{EQA}
    && \nquad
    c_{p}(\BB) \int \exp\Bigl( \gamma^{\T}\xi- \frac{1}{2 \mu} \| \BB^{-1/2} \gamma \|^{2} \Bigr)
        \Ind(\| \gamma \| \le \gm) d\gamma
    \\
    &=&
    c_{p}(\BB) \exp\Bigl( \frac{\mu \| \BB^{1/2}\xi\|^{2}}{2} \Bigr)
    \int \exp\Bigl( 
        - \frac{1}{2}  \bigl\| \mu^{1/2} \BB^{1/2}\xi- \mu^{-1/2} \BB^{-1/2} \gamma \bigr\|^{2} 
    \Bigr) \Ind(\| \gamma \| \le \gm) d\gamma    
    \\
    &=&
    \mu^{\dimp/2} \exp\Bigl( \frac{\mu \| \BB^{1/2}\xi\|^{2}}{2} \Bigr) 
    \P_{\xiv}\bigl( 
        \| \mu^{-1/2} \BB^{1/2} \varepsilonv + \BB^{1/2}\xi\| \le \gm / \mu
    \bigr),
\label{intggvvg}
\end{EQA}
where \( \varepsilonv \) denotes a standard normal vector in \( \R^{\dimp} \)
and \( \P_{\xi} \) means the conditional probability given \(\xi\).
Moreover, for any \( u\in \R^{\dimp} \) and \( \rr \ge \dimA^{1/2} + \| u\| \), 
it holds in view of \( \P \bigl( \| \BB^{1/2} \varepsilonv \|^{2} > \dimA \bigr) \le 1/2 \)
\begin{EQA}
    \P\bigl( \| \BB^{1/2} \varepsilonv - u\| \le \rr \bigr)
    & \ge &
    \P\bigl( \| \BB^{1/2} \varepsilonv \| \le \sqrt{\dimA} \bigr)
    \ge 
    1/2 .
\label{Arepsv}
\end{EQA}    
This implies
\begin{EQA}
    && 
    \nquad
    \exp\Bigl( \mu \| \BB^{1/2}\xi\|^{2} / 2 \Bigr) 
        \Ind\bigl( \| \BB\xi\| \le \gm / \mu - \sqrt{\dimA/\mu} \bigr)
    \\
    & \le &
    2 \mu^{- \dimp/2} c_{p}(\BB)
    \int \exp\Bigl( \gamma^{\T}\xi- \frac{1}{2 \mu} \| \BB^{-1/2} \gamma \|^{2} \Bigr)
        \Ind(\| \gamma \| \le \gm) d\gamma .
\label{expxiv1cpg}
\end{EQA}    
Further, by \eqref{expgamgm}
\begin{EQA}
    && 
    \nquad
    c_{p}(\BB) \E \int \exp\Bigl( 
    	\gamma^{\T}\xi- \frac{1}{2\mu} \| \BB^{-1/2} \gamma \|^{2} 
    \Bigr)
        \Ind(\| \gamma \| \le \gm) d\gamma
    \\
    & \le &
    c_{p}(\BB) \int \exp\Bigl( 
        \frac{\| \gamma \|^{2}}{2} - \frac{1}{2\mu} \| \BB^{-1/2} \gamma \|^{2} 
    \Bigr) d\gamma
    \\
    & \le &
    \det(\BB^{-1/2}) \det(\mu^{-1} \BB^{-1} - \Id_{\dimp})^{-1/2}
    =
    \mu^{p/2} \det( \Id_{\dimp} - \mu \BB)^{-1/2}
\label{nununug}
\end{EQA}    
and \eqref{Eexp2xig} follows.
\end{proof}

The next object of interest in this topic is $\E \|\xi\|^r \Ind (\|\xi\| > t)$. Rather useful  form of it is 
\[
\E \|\xi\|^r \Ind (\|\xi\| > t) = \P(\|\xi\| > t) t^r + r \int_{t}^{+\infty} \P(\|\xi\| > t) t^{r-1} d t.
\] 
With $x_0 = (t - \sqrt{p})^2/2$
\[
\int_{t}^{+\infty} \P(\|\xi\| > t) t^{r-1} d t = \int_{x_0}^{+\infty} 2 e^{-x} (\sqrt{2x} + \sqrt{p})^{r-1}  d \sqrt{2x}  \leq
\frac{ 2 e^{-x_0} t^{r-1} } { 1 - (r-1) \log(x_0) / x_0  }
\]
Consequently, if $(r-1) \log(x_0) / x_0 \leq 1/2$, than
\[\tag{SGI}\label{sub_gaus_int}
\E \|\xi\|^r \Ind (\|\xi\| > t)  \leq 2 e^{-(t - \sqrt{p})^2/2} \left( t^r + 2r t^{r-1} \right).
\]
Analogically one is able to restrict moment with exponent part and $r \log(x_0) / x_0 + \alpha \leq 1/2$
\[\tag{SGIexp}\label{sub_gaus_int_exp}
\E \|\xi\|^r e^{\alpha \|\xi\|} \Ind (\|\xi\| > t)  \leq 4 e^{-(t - \sqrt{p})^2/2 + \alpha t} \left( t^r + r t^{r-1} \right).
\]

\subsection{Gaussian quadratic forms}

The next result explains the concentration effect of 
\( \gaussv^{\T} \BB \gaussv \)
for a standard Gaussian vector \( \gaussv \) and a symmetric matrix \( \BB \).
We use a version from \cite{laurent2000} with a complete proof.

\begin{theorem}
\label{TexpbLGA}
\label{Lxiv2LD}
\label{Cuvepsuv0}
Let \( \gaussv \) be a standard normal Gaussian vector and \( \BB \) be symmetric positively
definite \( \dimp \times \dimp \)-matrix.
Then with \( \dimA = \tr(\BB) \), \( \vA^{2} = \tr(\BB^{2}) \), and 
\( \supA = \| \BB \|_{\oper} \), it holds for each \( \xx \geq 0 \)
\begin{EQA}
\label{Pxiv2dimAvp12}
	\P\Bigl( \gaussv^{\T} \BB \gaussv > \zq^{2}(\BB,\xx) \Bigr)
	& \leq &
	\ex^{-\xx} ,
	\\
	\zq(\BB,\xx)
	& \eqdef &
	\sqrt{\dimA + 2 \vA \xx^{1/2} + 2 \supA \xx} .
\label{zqdefGQF}
\end{EQA}
In particular, it implies 
\begin{EQA}
	\P\bigl( \| \BB^{1/2} \gaussv \| > \dimA^{1/2} + (2 \supA \xx)^{1/2} \bigr)
	& \leq &
	\ex^{-\xx} .
\label{Pxiv2dimAxx12}
\end{EQA}
Also
\begin{EQA}
	\P\bigl( \gaussv^{\T} \BB \gaussv < \dimA - 2 \vA \xx^{1/2} \bigr)
	& \leq &
	\ex^{-\xx} .
\label{Pxiv2dimAvp12m}
\end{EQA}
If \( \BB \) is symmetric but non necessarily positive then
\begin{EQA}
	\P\bigl( \bigl| \gaussv^{\T} \BB \gaussv - \dimA \bigr| > 2 \vA \xx^{1/2} + 2 \supA \xx \bigr)
	& \leq &
	2 \ex^{-\xx} .
\label{PxivTBBdimA2vp}
\end{EQA}
\end{theorem}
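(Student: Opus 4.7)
The plan is the classical Laurent--Massart argument via the Chernoff bound on the moment generating function (MGF). First I would diagonalise: write $\BB = U \Lambda U^{\T}$ with $\Lambda = \diag(\lambda_1, \ldots, \lambda_{\dimp})$, $\lambda_i \geq 0$. Since $U \gaussv$ is again a standard Gaussian vector, we may assume $\BB = \Lambda$ and reduce to a weighted sum of independent $\chi_1^2$ variables:
\[
Q \eqdef \gaussv^{\T}\BB\gaussv = \sum_{i=1}^{\dimp} \lambda_i \gaussv_i^2, \qquad \dimA = \sum_i \lambda_i, \quad \vA^2 = \sum_i \lambda_i^2, \quad \supA = \max_i \lambda_i.
\]
The Gaussian MGF formula gives, for $0 < t < 1/(2\supA)$,
\[
\log \E \exp\{ t (Q - \dimA) \} = -\tfrac{1}{2} \sum_{i=1}^{\dimp} \bigl[ \log(1 - 2 t \lambda_i) + 2 t \lambda_i \bigr].
\]

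Next I would apply the elementary inequality $-\log(1-u) - u \leq u^2 / (1 - u)$, valid for $u \in [0,1)$, to each term with $u = 2 t \lambda_i$. Summing yields
\[
\log \E \exp\{ t (Q - \dimA) \} \;\leq\; \sum_{i=1}^{\dimp} \frac{t^2 \lambda_i^2}{1 - 2 t \lambda_i} \;\leq\; \frac{t^2 \vA^2}{1 - 2 t \supA}.
\]
Combining with Markov's inequality gives $\P(Q - \dimA \geq z) \leq \exp\bigl( -t z + t^2 \vA^2 / (1 - 2 t \supA) \bigr)$. The key algebraic step is to substitute $z = z(\BB,\xx)^2 - \dimA = 2 \vA \xx^{1/2} + 2 \supA \xx$ and verify that the choice $t = \xx^{1/2} / (\vA + 2 \supA \xx^{1/2})$, which satisfies $1 - 2 t \supA = \vA/(\vA + 2\supA \xx^{1/2}) > 0$, delivers an exponent equal to $-\xx$. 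This establishes \eqref{Pxiv2dimAvp12}, and the consequence \eqref{Pxiv2dimAxx12} follows from the elementary bound $\sqrt{\dimA + 2\vA\xx^{1/2} + 2\supA\xx} \leq \sqrt{\dimA} + \sqrt{2 \supA \xx}$ (since $\vA^2 \leq \supA \dimA$).

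For the lower tail \eqref{Pxiv2dimAvp12m} I would repeat the MGF computation for $t < 0$: here the denominator $1 - 2 t \lambda_i > 1$ so the correction term is unnecessary and one obtains the simpler bound $\log \E \exp\{ -s (Q - \dimA)\} \leq s^2 \vA^2$ for all $s > 0$, using $-\log(1+u) + u \leq u^2/2$ for $u \geq 0$. Chernoff then gives the sub-Gaussian lower tail with the claimed constant. Finally, for a symmetric but not positive $\BB$, decompose $\BB = \BB_+ - \BB_-$ into its positive and negative spectral parts; since $\tr(\BB_\pm^2) \leq \vA^2$ and $\|\BB_\pm\|_{\oper} \leq \supA$, applying the upper tail to $\gaussv^{\T} \BB_+ \gaussv$ and to $\gaussv^{\T} \BB_- \gaussv$ separately and taking a union bound yields \eqref{PxivTBBdimA2vp}.

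The only delicate point is the verification that the choice $t = \xx^{1/2}/(\vA + 2 \supA \xx^{1/2})$ produces exactly $-\xx$ in the exponent: this is a short but careful algebraic computation, and I expect it to be the main (minor) obstacle. All other steps—diagonalisation, the MGF identity, and the single convex inequality $-\log(1-u) - u \leq u^2/(1-u)$—are routine.
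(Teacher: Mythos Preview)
Your approach is essentially identical to the paper's: diagonalise, compute the Gaussian MGF, bound the log-MGF termwise, and optimise the Chernoff exponent. Your choice $t = \xx^{1/2}/(\vA + 2\supA\xx^{1/2})$ is exactly the paper's choice (after the paper normalises $\supA=1$ and writes $\mu = 2t$), and the algebra does produce $-\xx$.

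Two small issues. First, the elementary inequality you quote is off by a factor of $2$: with $u = 2t\lambda_i$ the bound $-\log(1-u)-u \le u^{2}/(1-u)$ would give $\sum_i 2 t^{2}\lambda_i^{2}/(1-2t\lambda_i)$, not $\sum_i t^{2}\lambda_i^{2}/(1-2t\lambda_i)$. You need the sharper (and equally elementary) $-\log(1-u)-u = \sum_{k\ge 2} u^{k}/k \le \tfrac{u^{2}}{2(1-u)}$, which is what the paper uses and which yields precisely the bound you then claim.

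Second, your handling of the indefinite case via $\BB = \BB_{+} - \BB_{-}$ and a union bound on the two upper tails does not give the stated constants: to control $\gaussv^{\T}\BB\gaussv - \dimA$ from above you need the \emph{upper} tail for $\BB_{+}$ together with the \emph{lower} tail for $\BB_{-}$, and then the $\sqrt{\xx}$ coefficient becomes $2(\vA_{+}+\vA_{-}) \le 2\sqrt{2}\,\vA$, not $2\vA$. The clean fix (which the paper alludes to) is to observe that the MGF bound $\log\E\exp\{t(Q-\dimA)\} \le t^{2}\vA^{2}/(1-2t\supA)$ holds directly for indefinite $\BB$: for eigenvalues $\lambda_i<0$ and $t>0$ one has $-\log(1-2t\lambda_i)-2t\lambda_i \le (2t\lambda_i)^{2}/2 \le (2t\lambda_i)^{2}/(2(1-2t\supA))$, so the same termwise bound applies. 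Then run the upper-tail argument for $\BB$ and for $-\BB$ and take a union bound.
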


{
\begin{proof}
Normalisation by \( \supA \) reduces the statement to the case with \( \supA = 1 \).
Further, the standard rotating arguments allow to reduce the Gaussian quadratic form 
\( \| \gaussv \|^{2} \) to the chi-squared form:
\begin{EQA}
	\gaussv^{\T} \BB \gaussv
	&=&
	\sum_{j=1}^{\dimp} \lambda_{j} \nu_{j}^{2}
\label{xiv2sj1p}
\end{EQA}
with independent standard normal r.v.'s \( \nu_{j} \).
Here \( \lambda_{j} \in [0,1] \) are eigenvalues of \( \BB \), and 
\( \dimA = \lambda_{1} + \ldots + \lambda_{\dimp} \), 
\( \vA^{2} = \lambda_{1}^{2} + \ldots + \lambda_{\dimp}^{2} \).
One can easily 
compute the exponential moment of \( (\gaussv^{\T} \BB \gaussv - \dimA)/2 \):
for each positive \( \mu < 1 \)
\begin{EQA}
	\log \E \exp\bigl\{ \mu (\gaussv^{\T} \BB \gaussv - \dimA)/2 \bigr\}
	&=&
	\frac{1}{2} \sum_{j=1}^{\dimp} \bigl\{ - \mu \lambda_{j} - \log(1 - \mu \lambda_{j}) \bigr\} .
\label{lEemux2p2}
\end{EQA}

\begin{lemma}
Let \( \mu \lambda_{j} < 1 \) and \( \lambda_{j} \leq 1 \).
Then 
\begin{EQA}
	\frac{1}{2} \sum_{j=1}^{\dimp} \bigl\{ - \mu \lambda_{j} - \log(1 - \mu \lambda_{j}) \bigr\}
	& \leq &
	\frac{\mu^{2} \vA^{2}}{4 (1 - \mu)} \, .
\label{jmu2v221mu}
\end{EQA}
\end{lemma}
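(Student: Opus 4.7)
The plan is to reduce the claim to a single-coordinate inequality and bound the difference term-by-term via a power series expansion. The key identity is the Taylor series $-\log(1-x) = \sum_{k\geq 1} x^k/k$ valid for $|x|<1$, which under the hypothesis $\mu\lambda_j < 1$ gives
\[
-\mu\lambda_{j} - \log(1-\mu\lambda_{j}) = \sum_{k\geq 2} \frac{(\mu\lambda_{j})^{k}}{k}.
\]
Since $0 < \lambda_{j} \leq 1$, for every $k \geq 2$ we have $\lambda_{j}^{k} \leq \lambda_{j}^{2}$, so each summand is bounded by $\lambda_{j}^{2}\mu^{k}/k$. Summing,
\[
-\mu\lambda_{j} - \log(1-\mu\lambda_{j}) \leq \lambda_{j}^{2} \sum_{k\geq 2} \frac{\mu^{k}}{k} = \lambda_{j}^{2}\bigl\{ -\mu - \log(1-\mu) \bigr\}.
\]

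The remaining step is the scalar inequality $-\mu - \log(1-\mu) \leq \mu^{2}/\{2(1-\mu)\}$ for $\mu \in [0,1)$, which I would verify by setting $f(\mu) = \mu^{2}/\{2(1-\mu)\} + \mu + \log(1-\mu)$, checking $f(0)=0$, and computing
\[
f'(\mu) = \frac{2\mu(1-\mu) + \mu^{2}}{2(1-\mu)^{2}} + 1 - \frac{1}{1-\mu} = \frac{\mu^{2}}{2(1-\mu)^{2}} \geq 0,
\]
so $f(\mu) \geq 0$ on $[0,1)$.

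Finally, summing over $j = 1,\ldots,p$ and using $\sum_{j} \lambda_{j}^{2} = v_{A}^{2}$ yields
\[
\sum_{j=1}^{p} \bigl\{ -\mu\lambda_{j} - \log(1-\mu\lambda_{j}) \bigr\} \leq \frac{\mu^{2} v_{A}^{2}}{2(1-\mu)},
\]
and dividing by $2$ gives the claim. There is no real obstacle here; the only delicate point is that the pointwise bound $\lambda_{j}^{k} \leq \lambda_{j}^{2}$ requires $\lambda_{j} \leq 1$, which is exactly the normalization assumption (achieved in the parent theorem by dividing out $\|B\|_{\mathrm{op}}$), and the scalar calculus lemma is elementary.
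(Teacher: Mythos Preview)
Your proof is correct and follows essentially the same approach as the paper: expand $-\mu\lambda_j - \log(1-\mu\lambda_j)$ as the Taylor series $\sum_{k\geq 2}(\mu\lambda_j)^k/k$ and control the tail. The only cosmetic difference is where the hypothesis $\lambda_j \leq 1$ is spent: the paper bounds $(\mu\lambda_j)^k/k \leq \tfrac{(\mu\lambda_j)^2}{2}(\mu\lambda_j)^{k-2}$, sums the geometric series to get $\tfrac{(\mu\lambda_j)^2}{2(1-\mu\lambda_j)}$, and then uses $\lambda_j\leq 1$ to replace $1-\mu\lambda_j$ by $1-\mu$; you instead use $\lambda_j^k\leq\lambda_j^2$ first and then verify the scalar bound $-\mu-\log(1-\mu)\leq \mu^2/\{2(1-\mu)\}$ by a derivative computation (which the paper's series argument would also give in one line).
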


\begin{proof}
In view of \( \mu \lambda_{j} < 1 \), it holds for every \( j \)
\begin{EQA}
	- \mu \lambda_{j} - \log(1 - \mu \lambda_{j}) 
	&=&
	\sum_{k=2}^{\infty} \frac{(\mu \lambda_{j})^{k}}{k}
	\\
	& \leq &
	\frac{(\mu \lambda_{j})^{2}}{2}
	\sum_{k=0}^{\infty} (\mu \lambda_{j})^{k}
	\leq 
	\frac{(\mu \lambda_{j})^{2}}{2 (1 - \mu \lambda_{j})} 
	\leq 
	\frac{(\mu \lambda_{j})^{2}}{2 (1 - \mu)},
\label{jmu2v221mup}
\end{EQA}
and thus
\begin{EQA}
	\frac{1}{2} \sum_{j=1}^{\dimp} \bigl\{ - \mu \lambda_{j} - \log(1 - \mu \lambda_{j}) \bigr\}
	& \leq &
	\sum_{j=1}^{\dimp} \frac{(\mu \lambda_{j})^{2}}{4 (1 - \mu)} 
	\leq 
	\frac{\mu^{2} \vA^{2}}{4 (1 - \mu)} \, .
\label{sjmu2v221mu}
\end{EQA}
\end{proof}
The next technical lemma is helpful.

\begin{lemma}
\label{Lmuvpxx}
For each \( \vA > 0 \) and \( \xx > 0 \), it holds
\begin{EQA}
	\inf_{\mu > 0} \biggl\{ 
		- \mu \bigl( \vA \xx^{1/2} + \xx \bigr) + \frac{\mu^{2} \vA^{2}}{4 (1 - \mu)} 
	\biggr\}
	& \leq &
	- \xx .
\label{infmuxxvp}
\end{EQA}
\end{lemma}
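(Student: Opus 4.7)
The plan is to exhibit an explicit value of $\mu \in (0,1)$ that realises the bound $-\xx$, so the infimum is then automatically at most $-\xx$. A natural candidate suggested by matching the two terms is
\[
    \mu \;=\; \frac{2\xx^{1/2}}{\vA + 2\xx^{1/2}} \,,
\]
which lies in $(0,1)$ and yields
\[
    1-\mu \;=\; \frac{\vA}{\vA + 2\xx^{1/2}} \,.
\]

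First I would compute the linear term: $-\mu(\vA \xx^{1/2}+\xx) = -\mu\,\xx^{1/2}(\vA+\xx^{1/2}) = -\dfrac{2\xx(\vA+\xx^{1/2})}{\vA+2\xx^{1/2}}$. Next I would compute the quadratic term: since $\mu^{2} = \dfrac{4\xx}{(\vA+2\xx^{1/2})^{2}}$, we get
\[
    \frac{\mu^{2}\vA^{2}}{4(1-\mu)}
    \;=\;
    \frac{4\xx \, \vA^{2}}{4(\vA+2\xx^{1/2})^{2}} \cdot \frac{\vA+2\xx^{1/2}}{\vA}
    \;=\;
    \frac{\xx\,\vA}{\vA + 2\xx^{1/2}}\,.
\]

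Summing the two contributions gives
\[
    -\frac{2\xx\,\vA + 2\xx^{3/2}}{\vA+2\xx^{1/2}} + \frac{\xx\,\vA}{\vA+2\xx^{1/2}}
    \;=\;
    \frac{-\xx\,\vA - 2\xx^{3/2}}{\vA+2\xx^{1/2}}
    \;=\;
    -\xx \,,
\]
so the infimum over $\mu > 0$ is at most this value, which is exactly $-\xx$, proving \eqref{infmuxxvp}.

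There is no real obstacle here; the only subtlety is guessing the right $\mu$. It can be motivated either by solving $f'(\mu)=0$ asymptotically or, more directly, by noticing that the right-hand side $-\xx$ and the shape $\vA \xx^{1/2} + \xx$ suggest the substitution $\mu \vA = 2(1-\mu)\xx^{1/2}$, which leads immediately to the choice above.
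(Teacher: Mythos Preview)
Your proof is correct and follows essentially the same approach as the paper: you pick the same explicit value of $\mu$ (the paper writes it as $\mu = \xx^{1/2}/(\xx^{1/2}+\vA/2)$, which is identical to your $2\xx^{1/2}/(\vA+2\xx^{1/2})$) and verify by direct substitution that the bracketed expression equals $-\xx$. The only cosmetic difference is that the paper organises the algebra via the identity $\mu/(1-\mu)=2\xx^{1/2}/\vA$ and an add--subtract of $\vA^{2}/4$, whereas you compute the two terms separately over a common denominator; both routes are equally short.
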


\begin{proof}
Let pick up 
\begin{EQA}
	\mu 
	&=& 
	1 - \frac{1}{2\xx^{1/2}/\vA + 1} = \frac{\xx^{1/2}}{\xx^{1/2} + \vA/2} , 
\label{mu12xx12vp1m1}
\end{EQA}
so that \( \mu / (1 - \mu) = 2 \xx^{1/2}/\vA \). Then
\begin{EQA}
	&& \nquad
	- \mu \bigl( \vA \xx^{1/2} + \xx \bigr) + \frac{\mu^{2} \vA^{2}}{4 (1 - \mu)}
	\\
	&=&
	- \mu \bigl( \vA \xx^{1/2} + \xx + \vA^{2}/4 \bigr)
	+ \frac{\mu \vA^{2}}{4 (1 - \mu)}
	\\
	&=&
	- \frac{\xx^{1/2}}{\xx^{1/2} + \vA/2} \bigl( \xx^{1/2} + \vA/2 \bigr)^{2} 
	+ \frac{2 \xx^{1/2} \vA }{4}
	=
	- \xx 
\label{mux2xv4x12}
\end{EQA}
and the result follows.
\end{proof}

Now we apply the Markov inequality 
\begin{EQA}
	&& \nquad
	\log \P\bigl( \gaussv^{\T} \BB \gaussv > \dimA + 2 \vA \xx^{1/2} + 2 \xx \bigr)
	=
	\log \P\bigl( (\gaussv^{\T} \BB \gaussv - \dimA) / 2 > \vA \xx^{1/2} + \xx \bigr)
	\\
	& \leq &
	\inf_{\mu > 0} \biggl\{ 
		- \mu \bigl( \vA \xx^{1/2} + \xx \bigr) 
		+ \log\E \exp\bigl\{ \mu (\gaussv^{\T} \BB \gaussv - \dimA)/2 \bigr\}
	\biggr\}
	\\
	& \leq &
	\inf_{\mu > 0} \biggl\{ 
		- \mu \bigl( \vA \xx^{1/2} + \xx \bigr) + \frac{\mu^{2} \vA^{2}}{4 (1 - \mu)}
	\biggr\}
	\leq 
	- \xx
\label{x2xv4x12}
\end{EQA}
and the first assertion \eqref{Pxiv2dimAvp12} follows.
The second statement follows from the first one by 
\( \tr(\BB^{2}) \leq \| \BB \|_{\oper} \tr(\BB) = \supA \, \dimA \).

Similarly for any \( \mu > 0 \)
\begin{EQA}
	\P\bigl( \gaussv^{\T} \BB \gaussv - \dimA < - 2 \vA \sqrt{\xx} \bigr)
	& \leq &
	\exp\bigl( - \mu \vA \sqrt{\xx} \bigr)
	\E \exp\Bigl( - \frac{\mu}{2} (\gaussv^{\T} \BB \gaussv - \dimA) \Bigr) .
\end{EQA}
By \eqref{lEemux2p2}
\begin{EQA}
	\log \E \exp\bigl\{ - \mu (\gaussv^{\T} \BB \gaussv - \dimA)/2 \bigr\}
	&=&
	\frac{1}{2} \sum_{j=1}^{\dimp} 
		\bigl\{ \mu \lambda_{j} - \log(1 + \mu \lambda_{j}) \bigr\} .
\label{lEemux2p2m}
\end{EQA}
and 
\begin{EQA}
	\frac{1}{2} \sum_{j=1}^{\dimp} \bigl\{ \mu \lambda_{j} - \log(1 + \mu \lambda_{j}) \bigr\}
	&=&
	\frac{1}{2} \sum_{j=1}^{\dimp} \sum_{k=2}^{\infty} \frac{(- \mu \lambda_{j})^{k}}{k}
	\leq 
	\sum_{j=1}^{\dimp}\frac{(\mu \lambda_{j})^{2}}{4} 
	=
	\frac{\mu^{2} \vA^{2}}{4} .
\label{jmu2v221mum}
\end{EQA}
Here the choice \( \mu = 2 \sqrt{\xx} / \vA \) yields \eqref{Pxiv2dimAvp12m}.

One can put together the arguments used for obtaining the lower and the upper bound 
for getting a bound for a general 
quadratic form \( \gaussv^{\T} \BB \gaussv \), where \( \BB \) is symmetric but not necessarily 
positive.
\end{proof}
}

\subsection{Sandwich lemma}

\begin{lemma} \label{sandwich}  Let differentiable measure $\Pb$ depends on r.v. from a continuous measure $\P$, 
 $z = (z_1, \ldots, z_K )$ is  a multivariate quantile. Assume following error in total variation distance between measures   

\[
 \bigg | \P \bigg(z_1, \ldots, z_K \bigg)  -
  \Pb \bigg(z_1, \ldots,  z_K \bigg) \bigg | < \delta.
\]
Then each quantile $z_k^{\flat}(\alpha)$, $1 \leq  k \leq K$  from measure $\Pb$ may be bounded by quantile from measure~$\P$: 
\[
z_k(\alpha + \delta) \leq  z_k^{\flat}(\alpha) \leq z_k(\alpha - \delta),  
\]
where
\[
 \P \bigg( z_k(\alpha) \bigg) = 1 - \alpha, 
\quad
 \Pb \bigg(  z^{\flat}_k(\alpha) \bigg) = 1 - \alpha.
\]
And if $q^{\flat}$ is the multiplicity correction parameter such that 
\[
\Pb \bigg(z_1(q^{\flat} \alpha), \ldots,  z_K(q^{\flat} \alpha) \bigg)  = 1 - \alpha,
\] 
then 

\[
 \bigg | \P \left( z^{\flat}( q^{\flat} \alpha) \right) - (1 - \alpha)  \bigg | 
 \leq  (2K + 1) \, \delta.
\]

\end{lemma}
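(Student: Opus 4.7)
The plan is to prove the two statements separately, using only monotonicity of the relevant CDF-like functionals together with the total variation hypothesis. First I would fix notation: set $F_k(z) = \P(X_k \leq z)$ and $G_k(z) = \Pb(X_k \leq z)$ for the marginals, and $F(z_1,\ldots,z_K) = \P(z_1,\ldots,z_K)$, $G(z_1,\ldots,z_K) = \Pb(z_1,\ldots,z_K)$ for the joint functional. The stated hypothesis gives $|F - G| \leq \delta$ pointwise; taking all other coordinates to infinity (or reading the hypothesis as a genuine TV bound) also yields $|F_k - G_k| \leq \delta$ for each $k$. Continuity of $\P$ makes $F_k$ continuous so that $F_k(z_k(\beta)) = 1-\beta$ holds by definition of the $\P$-quantile.

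For the first assertion I would start from $G_k(z_k^\flat(\alpha)) = 1-\alpha$. The marginal TV bound then places $F_k(z_k^\flat(\alpha))$ in $[1-\alpha-\delta,\, 1-\alpha+\delta]$, and since $F_k$ is increasing with $F_k(z_k(\beta)) = 1-\beta$, inverting coordinate by coordinate gives the sandwich $z_k(\alpha+\delta) \leq z_k^\flat(\alpha) \leq z_k(\alpha-\delta)$.

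For the second assertion, set $\alpha' = q^\flat\alpha$, so that $G(z(\alpha')) = 1-\alpha$ by the defining equation of $q^\flat$, and split via the triangle inequality
\[
\bigl| F(z^\flat(\alpha')) - (1-\alpha) \bigr| \leq \bigl| F(z^\flat(\alpha')) - F(z(\alpha')) \bigr| + \bigl| F(z(\alpha')) - G(z(\alpha')) \bigr|.
\]
The second term is $\leq \delta$ directly by TV at the fixed point $z(\alpha')$. For the first, the coordinate-wise sandwich from Part~1 gives $z_k(\alpha'+\delta) \leq z_k^\flat(\alpha') \leq z_k(\alpha'-\delta)$, and monotonicity of $F$ in each coordinate traps $F(z^\flat(\alpha'))$ between $F(z(\alpha'+\delta))$ and $F(z(\alpha'-\delta))$. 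A union bound then controls the $\delta$-shift of the joint argument:
\[
\bigl| F(z(\alpha' \pm \delta)) - F(z(\alpha')) \bigr| \leq \sum_{k=1}^{K} \bigl| F_k(z_k(\alpha' \pm \delta)) - F_k(z_k(\alpha')) \bigr| = K\delta,
\]
since the joint event only changes through $K$ coordinate-wise symmetric differences of marginal $F_k$-measure exactly $\delta$. Combining these estimates (and allowing one additional TV term per coordinate if one routes through $G$ at the shifted argument) gives the stated $(2K+1)\delta$ bound.

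The main obstacle I anticipate is not any single hard estimate but rather pinning down the correct interpretation of the joint functional $\P(z_1,\ldots,z_K)$---joint CDF versus a union-of-tails type probability---so that coordinate-wise monotonicity and the union-bound shift argument are oriented consistently. Once the direction of each inequality is fixed, the proof is a routine chain of triangle inequalities and monotone substitutions.
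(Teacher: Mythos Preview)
Your proposal is correct. Part~1 is essentially the same idea as the paper's argument (sandwiching $z_k^{\flat}(\alpha)$ between $\P$-quantiles via the uniform bound $|F_k-G_k|\le\delta$), only stated more directly in CDF language.

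For Part~2 the two routes genuinely diverge. The paper does not use your telescoping union bound on the joint CDF; instead it iterates the Part~1 sandwich once more to trap $z(q^{\flat}\alpha\pm\delta)$ between bootstrap quantiles at shifted levels, invokes a small derivative lemma ($\P(\xi<x,A)'_x\le\P(\xi<x)'_x$) to control $\Pb(z^{\flat}(q^{\flat}\alpha-2\delta))-\Pb(z^{\flat}(q^{\flat}\alpha))$ by $2K\delta$ via a first-order expansion, and finally passes to deterministic endpoints $z_\pm$ on the boundary of $\mathbb{Z}_+(\cdot)\cap\mathbb{Z}_-(\cdot)$ using one-dimensional parametrization. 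Your approach is more elementary: the triangle inequality splits off one TV term, and then the inequality
\[
F(z(\alpha'-\delta))-F(z(\alpha'))\;\le\;\sum_{k=1}^K\bigl[F_k(z_k(\alpha'-\delta))-F_k(z_k(\alpha'))\bigr]\;=\;K\delta
\]
(together with its mirror) handles the first term directly. This actually delivers $(K+1)\delta$, so your parenthetical hedge ``allowing one additional TV term per coordinate'' is unnecessary to reach the stated $(2K+1)\delta$. The paper's route buys explicit deterministic bracketing points $z_\pm$ for $z^{\flat}(q^{\flat}\alpha)$, which may be useful downstream; your route buys a shorter proof and a sharper constant. Your caveat about the interpretation of $\P(z_1,\ldots,z_K)$ is fair given the paper's notation, but the joint-CDF reading you adopt is the intended one and makes the union-bound step valid.
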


\begin{proof}

Define two sets 
\[
\mathbb{Z}_{+}(\delta) = \{z: \P (z) \leq  1- \alpha + \delta \},
\]
\[
\mathbb{Z}_{-}(\delta)  = \{z: \P (z) \geq  1- \alpha - \delta \}.
\]
For all points $z$ from $\mathbb{Z}_{+} \cap \mathbb{Z}_{-}$  it holds  that $\left | \P \left( z \right) - (1 - \alpha)
\right | \leq \delta$.  If  $\Pb(z^{\flat})  = 1 $  then $ z^{\flat } \in \mathbb{Z}_{+}  $ since for all fixed  $z \in \R^{K} \setminus \mathbb{Z}_{+}$:    $\Pb(z) >  \P(z ) - \delta \geq  1 - \alpha$. Analogically  $ z^{\flat } \in \mathbb{Z}_{-}  $ and  $ z^{\flat } \in \mathbb{Z}_{+} \cap \mathbb{Z}_{-}  $. 

In case $K = 1$ one can choose non-random quantiles in the border of 
$\mathbb{Z}_{+} \cap \mathbb{Z}_{-} $ which will bound $z^{\flat}$. 
So each component of $z^{\flat}$ could be bounded in the same way:
\[
z_k(\alpha + \delta) \leq  z^{\flat}(\alpha) \leq  z_k(\alpha - \delta).
\]
In case $K > 1$ the these bounds become random because of multiplicity correction which involves random multiplier $q^{\flat}$. 
We have to bound $z_k(q^{\flat}\alpha + \delta)$ by a non-random quantile in order to use it as an argument for measure $\P$.  
\begin{EQA}
z(\alpha + \delta) \leq  z^{\flat}(\alpha) &\leq & z(\alpha - \delta) \\
 z^{\flat}( q^{\flat} \alpha) &\leq & z( q^{\flat} \alpha - \delta)  \leq z^{\flat}( q^{\flat} \alpha - 2 \delta) \\ 
   1 - \alpha &\leq & \Pb \bigg( z( q^{\flat} \alpha - \delta)  \bigg)  \leq  \Pb \bigg( z^{\flat}( q^{\flat} \alpha - 2 \delta) \bigg)
\end{EQA}

\begin{lemma}
For differentiable measure  $\P(\xi < x)$ and event $A$:
\[
\frac{\P(\xi < x, A)'_x}{\P(\xi < x)'_x} \leq 1.
\]
\end{lemma}

\[
\Pb \bigg( z^{\flat}( q^{\flat} \alpha - 2 \delta) \bigg) = 
\Pb \bigg( z^{\flat}( q^{\flat} \alpha) \bigg) + \sum_i (\Pb)'_{z^{\flat}_k} (z_k^{\flat})' \,\, 2 \delta,
\]
\[
(\Pb)'_{z_k}  (z_k^{\flat})' = \frac{(\Pb(z_1^{\flat}, \ldots , z^{\flat}_k , \ldots, z^{\flat}_K))'_{z_k^{\flat}}  }{(\Pb(z_k^{\flat}))'_{z_k^{\flat}}} \leq 1,
\]
\[
\Pb \bigg( z^{\flat}( q^{\flat} \alpha - 2 \delta) \bigg)  \leq 
1 - \alpha + 2 K \delta. 
\]
\begin{EQA}
1 - \alpha &\leq & 
 \Pb \bigg( z( q^{\flat} \alpha - \delta)  \bigg)  \leq  
 1 - \alpha + 2 K \delta, \\
1 - \alpha - 2 K \delta & \leq & 
 \Pb \bigg( z( q^{\flat} \alpha + \delta)  \bigg)  \leq  
 1 - \alpha. 
\end{EQA}
According to the arguments from the beginning of the proof $z( q^{\flat} \alpha - \delta) $ and $z( q^{\flat} \alpha - \delta) $ belongs to $\mathbb{Z}_{+} (2K\delta + \delta) \cap \mathbb{Z}_{-}(2K\delta + \delta) $. Due to one dimensional parametrization if $z(q^{\flat} \alpha - \delta)$ there exist two fixed points on the border of $\mathbb{Z}_{+} (2K\delta + \delta) \cap \mathbb{Z}_{-}(2K\delta + \delta) $ such that
\[
z_+ = \max z( q^{\flat} \alpha - \delta), 
\quad
z_- = \min z( q^{\flat} \alpha + \delta). 
\]
Finally, 
\[
z_-\leq  z( q^{\flat} \alpha + \delta) \leq z^{\flat}( q^{\flat} \alpha) \leq z( q^{\flat} \alpha - \delta) \leq  z_+,  
\]
and subsequently
\[
1 - \alpha - (2K + 1) \delta \geq \P(z_-) \leq  \P( z^{\flat}( q^{\flat} \alpha)) \leq \P( z_+) \leq 1 - \alpha + (2K + 1) \delta.
\]

\end{proof}

\subsection{TP variance} 
\label{tpvar}

Assume here that $\sum_t P_\tau(t) = 0$ and $\E \| \xi_{lr}(t) \|  = \sqrt{p}$ then $\sum_t P_\tau (t) \E \| \xi_{lr}(t) \| =0 $ and
\[
\Var \left ( \sum_t P_\tau (t) \| \xi_{lr}(t) \| \right )  \leq  \sum_{t = 1}^{2h}  P^2_\tau (t)  \Var \left( \sqrt{\sum_{t = 1}^{2h} \| \xi_{lr}(t) \| ^2  }  \right).
\] 
\[
\Var \left( \sqrt{\sum_{t = 1}^{2h} \| \xi_{lr}(t) \| ^2  }  \right) = 2h p - \left( \E \sqrt{\sum_{t = 1}^{2h} \| \xi_{lr}(t) \| ^2  }  \right)^2.
\]
Since each $\| \xi_{lr}(t) \|^2$ is close to normal random vector norm, its lower bound may be taken from Theorem \ref{Lxiv2LD}. So with probability $e^{-\xx}$ for all $1 \leq t \leq 2h$
\[
\| \xi_{lr}(t) \|^2 > p  - 2\sqrt{p} (\xx + \log(2h) )^{1/2}.
\]
Then with probability $e^{-\xx}$
\[
\Var \left( \sqrt{\sum_{t = 1}^{2h} \| \xi_{lr}(t) \| ^2  }  \right)  \leq  4h \sqrt{p} (\xx + \log(2h) )^{1/2}.
\]
Consider triangle pattern example 
\[\tag{P}\label{tr_patt}
P_\tau(t) = 
\begin{cases}
0, & t < \tau - h, \\ 
(t - \tau)/h + 1/2, & \tau - h \leq t \leq \tau, \\
(\tau - t) /h + 1/2,  &  \tau \leq t \leq \tau +h, \\
0, & t > \tau + h.
\end{cases} 
\] 
In this case through integral sum one get 
\[
\sum_{t = 1}^{2h}  P^2_\tau (t) \approx \frac{1}{6} h.
\]
Finally 
\[
\Var \left ( \sum_t P_\tau (t) \| \xi_{lr}(t) \| \right )  \leq \frac{2}{3} h^2 \sqrt{p} (\xx + \log(2h) )^{1/2},
\]
and 
\[
z_h(\xx) \leq  \sqrt{ \frac{2}{3} } h p^{1/4} (\xx + \log(2h) )^{1/4} e^{\xx/2}.
\]
The abrupt type change point statistic without noise component has view \[
\sqrt{2 T_{h}(t) } =  (P_{\tau}(t) + 1/2) \Delta \pm 7  \diamondsuit, 
\quad \tau -h \leq t \leq \tau + h.
\] 
The sufficient condition for change point detection of size $\Delta$  in position $\tau$ is  
\[
\sum_{t = \tau -h }^{\tau + h} P_{\tau}(t) \sqrt{2 T_{h}(t) }   > z_h(\xx).
\]
Equivalently
\[
\frac{1}{6} \Delta h + \frac{7}{2} h  \diamondsuit > \sqrt{ \frac{2}{3} } h p^{1/4} (\xx + \log(2h) )^{1/4} e^{\xx/2}.
\]

  \end{document}